\newtheorem{theorem}{Theorem}[section]
\newtheorem{lemma}[theorem]{Lemma}
\newtheorem{corollary}[theorem]{Corollary}
\newtheorem*{conjecture}{Conjecture}
\newtheorem*{theorem*}{Theorem}
\theoremstyle{remark}\newtheorem{remark}{Remark}
\newtheorem{case}{Case}
\numberwithin{equation}{section}
\renewcommand{\pmod}[1]{\left(\mathrm{mod}\,#1\right)}
\newcommand{\ve}{\varepsilon}
\newcommand{\eps}{\epsilon}
\newcommand\blfootnote[1]{
\begingroup
\renewcommand\thefootnote{}\footnote{#1}
\endgroup
}
\begin{document}

\title[Correlation of $L$-functions over function fields]{Correlation of shifted values of $L$-functions in the hyperelliptic ensemble}

\author{Pranendu Darbar}
\address{Indian Statistical Institute   \\
Kolkata  \\
West Bengal 700108,  India}
\email[Pranendu Darbar]{darbarpranendu100@gmail.com}

\author{Gopal Maiti}
\address{Indian Statistical Institute   \\
Kolkata  \\
West Bengal 700108,  India}
\email[Gopal Maiti]{g.gopaltamluk@gmail.com}


\keywords{Finite fields, Function fields, Correlations of quadratic Dirichlet $L$-functions, Hyperelliptic curves}

 \begin{abstract}
  	The moments of quadratic Dirichlet $L$-functions over function fields  have recently attracted much attention with the work of Andrade and Keating. In this article, we establish lower bounds for the mean values of the product of  quadratic Dirichlet $L$-functions associated with hyperelliptic curves of genus $g$ over a fixed finite field $\mathbb{F}_q$ in the large genus limit. By using the idea of A. Florea \cite{FL3},  we also obtain their upper bounds. As a consequence, we find upper bounds of its derivatives.  
These lower and upper bounds give the correlation of quadratic Dirichlet $L$-functions associated with hyperelliptic curves with different transitions.
  \end{abstract}

\blfootnote{2010 {\it Mathematics Subject Classification}: 11G20, 11T06, 11M50.}

\maketitle

 \section{Introduction} 
 The correlation of $L$-functions i.e., study of the mean values of product of shifted values of $L$-functions near the critical line has become central to number theory. 
Random matrix theory has recently become a fundamental tool for understanding the correlation of $L$-functions. Montgomery \cite{MON} showed that two-point correlations between the
non-trivial zeros of the Riemann $\zeta$-function, on the scale of the mean zero spacing, are similar to the corresponding correlations between the eigenvalues of random unitary matrices
in the limit of large matrix size and conjectured that these correlations are, in fact, identical to each other.

\vspace{1mm}
\noindent
Keating and Snaith \cite{KS2} suggested that the value distribution of the Riemann zeta function on its critical line is related to that the characteristic polynomials of random unitary matrices. Conjectures for the moments of $L$-functions have been attempted for many decades, with very little progress until the random matrix theory came into the subject.

\vspace{1mm}
\noindent
The main observation is that the structure of the mean values of
L-functions is more clearly revealed if one considers the average of a product of $L$-functions,
where each $L$-function is evaluated at a location slightly shifted from the critical point. 

\vspace{1mm}
\noindent
In this article, we discus about the moments and correlation of Riemann zeta function (belonging to unitary family), Dirichlet $L$-functions (contained in the symplectic family) and quadratic Dirichlet $L$-functions associated with hyperelliptic curves of large genus over a fixed finite field which are also members of the symplectic family. These families and their random matrix analogs have been discussed from the perspective of the leading terms in the asymptotic expressions by several authors (see \cite{CF}, \cite{CFKRS}, \cite{CFKRS2}, \cite{AK2}, \cite{KS1}, \cite{KS2}, \cite{CS} and \cite{KO}).    

\vspace{1mm}

Our main goal of this article is to establish lower and upper bounds for the correlation of shifted values of quadratic Dirichlet $L$-functions near the critical line associated to  the hyperelliptic curves of large genus over a fixed finite field.
 \subsection{Moments of the Riemann Zeta function} A classical question in the theory of Riemann zeta function is to determine the asymptotic behaviour of 
 \[
 M_k(T):= \int_{1}^{T}|\zeta(\tfrac{1}{2}+it)|^{2k}\,dt,
 \]
  where $k\in \mathbb{C}$, as $T\to \infty$. It is believed that for a given positive real number $k$, 
  \[
  \displaystyle{M_{k}(T)\sim c_{k}T(\log T)^{k^{2}}},
  \]
 where $c_k$ is a positive constant. Ramachandra \cite{RAM} showed that 
 \[
 M_k(T) \gg T(\log T)^{k^{2}},
 \]
 for any $k\in \mathbb{N}$.
  Using moments of characteristic polynomials of random matrices, Keating and Snaith \cite{KS1}, conjectured an exact value of $c_{k}$ for $\Re (k)>-\frac{1}{2}$. 
 Assuming the Riemann hypothesis (RH), Soundararajan \cite{SOU} showed that for every positive real number $k$ and $\ve>0$,
  \begin{align}\label{L1}
 M_{k}(T)\ll_{k,\ve} c_{k}T(\log T)^{k^{2}+\ve}
 \end{align} 
 and Harper \cite{AH} removed the exponent $\epsilon$ in the bound of  \eqref{L1}. 
 
 \vspace{2mm}
 \noindent
 A generalization of the moments of $\zeta(s)$ are the shifted moments, defined as $$M_{\bm{k}^{(m)}}(T,\bm{\alpha}^{(m)}):=\int_{0}^{T}|\zeta(\tfrac{1}{2}+it+i\alpha_{1})|^{2k_{1}}\ldots |\zeta(\tfrac{1}{2}+it+i\alpha_{m})|^{2k_{m}}\,dt, $$ where $\bm{k}^{(m)}=(k_{1},\ldots, k_{m})$ is a sequence of positive real numbers and $\bm{\alpha}^{(m)}=(\alpha_{1},\ldots,\alpha_{m})\in \mathbb{R}^{m}$ with $\alpha_{i}\neq\alpha_{j}$ for $i\neq j$. In \cite{CHA}, Chandee obtained lower and upper bounds of $M_{\bm{k}^{(m)}}(T,\bm{\alpha}^{(m)})$ for some special choices of $\bm{\alpha}^{(m)}$ and for a large values of $T$. More precisely, she proved assuming the RH, 
 \[
 M_{\bm{k}^{(m)}}(T,\bm{\alpha}^{(m)})\ll_{\bm{k}^{(m)}, \ve} T\,(\log T)^{k_{1}^{2}+\ldots+k_{m}^{2} +\ve}  \prod_{i<j}\left(\min\Big\{\tfrac{1}{|\alpha_{i}-\alpha_{j}|},\log T\Big\} \right)^{2k_{i}k_{j}} 
 \]
 and unconditionally  
 \begin{align*}
  M_{\bm{k}^{(m)}}(T,\bm{\alpha}^{(m)})\gg_{\bm{k}^{(m)}, \bm{\alpha}^{(m)}} T\,(\log T)^{k_{1}^{2}+\ldots+k_{m}^{2}}  \prod_{i<j}\left(\min\Big\{\frac{1}{|\alpha_{i}-\alpha_{j}|},\log T\Big\} \right)^{2k_{i}k_{j}},
 \end{align*} for sufficiently large $T$.
 \vspace{1mm}
 
 \noindent
  The moments of the derivatives of the Riemann zeta function were studied by several mathematicians. An analog of Soundararajan's estimate \eqref{L1} for the derivatives of Riemann zeta function was obtained by Milinovich \cite{MI2}. Under the RH, he showed that for every $\ve>0$, 
 \[
 \int_{1}^{T}|\zeta^{(l)}(\tfrac{1}{2}+it)|^{2k}\,dt\ll_{k,l,\ve} T\,(\log T)^{k^{2}+ 2kl+\ve},
 \]
  where $k,l\in \mathbb{N}$ and $\zeta^{(l)}$ is the $l$-th derivative of $\zeta$.
 \subsection{Moments of quadratic Dirichlet $L$-functions} Let $\chi_{d}$ be a real primitive Dirichlet character modulo $d$ given by the Kronecker symbol $\chi_{d}(n)=\left(\tfrac{d}{n} \right)$. It is interesting to determine the asymptotic behaviour of $\textstyle{\sum_{0<d\leq D} L(\tfrac{1}{2},\chi_{d})^{k}}\,\,\,$ as $D\rightarrow \infty$. Extending their approach to the zeta function, using random matrix theory, Keating and Snaith \cite{KS2} made the following conjecture about the asymptotic behaviour of moments of Dirichlet $L$-functions $L(\frac{1}{2},\chi_{d})$.
 \begin{conjecture}[Keating, Snaith] For $k$ fixed with $\Re(k)\geq 0$, as $D\rightarrow \infty$ 
 	\begin{align*}
 	\frac{1}{D}\sideset{}{^\star}\sum_{|d|\leq D}L(\tfrac{1}{2}, \chi_{d})^{k}\sim c_{k}\,\left(\log D \right)^{\frac{k(k+1)}{2}} ,
 	\end{align*}
 	where $c_{k}$ is a positive constant and $\normalsize\sideset{}{^\star}\sum$ indicates that the sum is over fundamental discriminants.
 \end{conjecture}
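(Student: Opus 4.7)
The plan is to follow the Conrey--Farmer--Keating--Rubinstein--Snaith (CFKRS) recipe, which both identifies the constant $c_{k}$ and prescribes how to prove the asymptotic. First I would write down the explicit candidate $c_{k}=a_{k}g_{k}$, where the symplectic random-matrix factor
\[
g_{k}=2^{k(k-1)/2}\prod_{j=1}^{k}\frac{j!}{(2j)!}
\]
is the Keating--Snaith constant arising as the value at the origin of the moment polynomial of characteristic polynomials of matrices in $\mathrm{USp}(2N)$, while the arithmetic factor
\[
a_{k}=\prod_{p}\frac{(1-p^{-1})^{k(k+1)/2}}{1+p^{-1}}\left(\frac{1}{2}(1-p^{-1/2})^{-k}+\frac{1}{2}(1+p^{-1/2})^{-k}+\frac{1}{p}\right)
\]
will emerge from the diagonal Euler product.

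Next I would apply the approximate functional equation for $L(s,\chi_{d})$ with a smooth cut-off $V$, in the style of Soundararajan:
\[
L(\tfrac{1}{2},\chi_{d})=\sum_{n\geq 1}\frac{\chi_{d}(n)}{\sqrt{n}}V\!\left(\frac{n}{\sqrt{|d|}}\right)+(\text{dual term}),
\]
so that $L(\tfrac{1}{2},\chi_{d})^{k}$ expands into a $k$-fold Dirichlet sum with coefficients $\tau_{k}(n):=\#\{(n_{1},\ldots,n_{k}):n_{1}\cdots n_{k}=n\}$ and truncation length $\asymp|d|^{k/2}$. Averaging over fundamental discriminants $|d|\leq D$ and invoking the orthogonality of $\chi_{d}$, the contribution from square arguments $n=\square$ yields, prime by prime, exactly the Euler factors of $a_{k}$; packaging the combinatorics of the diagonal through a $k$-fold Mellin--Barnes contour integral and computing the residue at the origin produces precisely $g_{k}(\log D)^{k(k+1)/2}$, matching the CFKRS prediction term by term.

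The final and essential step is to dispose of the off-diagonal contribution from non-square $n$. Here I would apply Poisson summation over $d$ together with quadratic reciprocity (following Soundararajan's treatment of the cubic moment) to transform the off-diagonal into a dual $k$-fold sum weighted by Gauss sums, of effective length $|d|^{(k-1)/2}$. Estimating this dual sum is the central obstacle: for $k\in\{1,2,3,4\}$ the dual length sits below the square-root barrier, so the large sieve for quadratic characters combined with standard bounds on $\tau_{k}$ suffices, and this is how Jutila, Soundararajan and Shen obtain the sharp asymptotic in those cases. For $k\geq 5$ the dual length exceeds $|d|^{1/2}$ and no elementary argument closes the recursion; overcoming this---most plausibly via a sharper mean-value theorem for quadratic Dirichlet $L$-functions twisted by $\tau_{k}$, or by transferring inputs from the function-field model developed in the present paper---is the essential difficulty standing between the CFKRS heuristic and a full proof of the stated asymptotic with the explicit constant $c_{k}=a_{k}g_{k}$.
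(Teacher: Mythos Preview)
The statement you are addressing is labeled a \emph{Conjecture} in the paper, and the paper makes no attempt to prove it: it is quoted purely as background from Keating and Snaith, and for general $k$ it remains open. There is therefore no proof in the paper against which to compare your proposal.

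Your write-up is not a proof either, and you say as much in the final paragraph: after laying out the CFKRS recipe and the square/non-square decomposition, you observe that for $k\geq 5$ the dual sum after Poisson summation has length exceeding the square-root barrier and that ``no elementary argument closes the recursion.'' That is an accurate description of the obstruction, but it means what you have written is a heuristic outline together with a statement of what is missing, not a proof. Any genuine proof of the conjecture would have to supply precisely the ingredient you identify as absent---control of the off-diagonal contribution for all $k$---and no such ingredient is currently known. In short, there is no gap to name in your argument beyond the one you have already named yourself; the point is simply that the target is a conjecture, not a theorem, and neither the paper nor your proposal proves it.
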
 

 \noindent
 In \cite{RS}, Rudnick and Soundararajan obtained that for any rational number $k\geq 1$,
 \begin{align*}
 \frac{1}{D}\sideset{}{^\star}\sum_{|d|\leq D}L(\tfrac{1}{2}, \chi_{d})^{k}\gg_{k} \,\left(\log D \right)^{\frac{k(k+1)}{2}}.
\end{align*} 
 Assuming the generalized Riemann hypothesis (GRH), Soundararajan established that for any positive real number $k$ and $\epsilon>0$,
 \begin{align*}
\frac{1}{D} \sideset{}{^\star}\sum_{|d|\leq D}L(\tfrac{1}{2}, \chi_{d})^{k}\ll_{k, \epsilon}\,\left(\log D \right)^{\frac{k(k+1)}{2}+\ve}.
\end{align*}
\noindent
In general, it is important to find asymptotic behaviour of the following correlation of shifted values of Dirichlet $L$-functions:
\begin{align*}
S_{\bm{k}^{(m)}}(\bm{\alpha}^{(m)}, D):=\sideset{}{^\star}\sum_{d\leq D}L(\tfrac{1}{2}+\alpha_{1}, \chi_{d})^{k_{1}}\dots L(\tfrac{1}{2}+\alpha_{m}, \chi_{d})^{k_{m}},
\end{align*}
where $\bm{k}^{(m)}=(k_{1},\dots,k_{m})$ be a sequence of real numbers and $\bm{\alpha}^{(m)}=(\alpha_{1},\dots,\alpha_{m})$ be a sequence of complex numbers with $\alpha_i \neq\alpha_j$ for $i\neq j$.

Conrey $et.\, al$. \cite{CFKRS} gave a conjecture on the asymptotic behaviour of $S_{\bm{k}^{(m)}}(\bm{\alpha}^{(m)}, D)$.
Analogous questions for higher degree $L$-functions have been studied by Milinovich and Turnage-Butterbaugh \cite{MBT}.
\subsection{Moments of $L$-functions in the hyperelliptic essemble}

\vspace{2mm} 
\noindent
Let $\mathbb{F}_{q}$ be a finite field of odd cardinality and $\mathbb{F}_{q}[t]$ be the polynomial ring over $\mathbb{F}_{q}$ in variable $t$. Let $D\in \mathbb{F}_q[t]$ be a monic square-free polynomial. The quadratic character $\chi_D$ attached  to $D$ is defined using   quadratic residue symbol for $\mathbb{F}_{q}[t]$ by $\chi_{D}(f)=\left(\frac{D}{f}\right)$ and the corresponding Dirichlet $L$-function is denoted by $L(s, \chi_D)$. It is often convenient to work with the equivalent $L$-function $\mathcal{L}(u, \chi_D)$ written in terms of the variable $u=q^{-s}$.
 

\vspace{1mm}
\noindent
Define hyperelltiptic ensemble $\mathcal{H}_{n,q}$ or simply $\mathcal{H}_{n}$ as
\[
\mathcal{H}_n=\left\{ D\in \mathbb{F}_{q}[t] : \, D \text{ is monic, square free, and} \, \deg(D)=n \right\}.
\] 
For each $D$ in the Hyperelliptic essemble $\mathcal{H}_n$, there is an associated hyperelliptic curve given by $C_D \,:\, y^2=D(t)$. These curve are non-singular and of genus $g$ given by 
\begin{align}\label{genus}
2g=n-1-\lambda,
\end{align}
where
\begin{align*}
  \lambda= 
 \left\{
 \begin{array}
 [c]{ll}
  1, &\, \text{if\, $n$ even}, \\
 0, &\, \text{if\, $n$ odd}.
  \end{array}
 \right.
\end{align*}
Note that, $g\to \infty$ as $n$ does so.
See section \ref{prilimimaries} for more details about the properties of Dirichlet $L$-function $L(s, \chi_D)$ and their spectral interpretation.

%
\vspace{1mm}
\noindent
Andrade and Keating \cite{AK2} conjectured that as $g\to \infty$,
\begin{align}\label{conjecture on function field}
\sum_{D\in \mathcal{H}_{2g+1}}L(\tfrac{1}{2},\chi_{D})^{k}\, =\,q^{2g+1}\,\left(P_{k}(2g+1) \,+\, o(1) \right),
\end{align}
where $P_{k}$ is a polynomial of degree $\frac{k(k+1)}{2}$. Assuming $q\equiv 1 \pmod 4$, the conjecture \eqref{conjecture on function field} is known for $k=1$ from the work of Andrade \cite{AK1}, and the error term in the asymptotic formula was improved by Florea \cite{FL1}. In \cite{FL2, FL3}, Florea also proved the conjecture \eqref{conjecture on function field} for $k=2, 3$ and $4$ assuming $q\equiv 1 \pmod 4$. For $n=2g+2$, Jung \cite{JUNG} obtained that
\[
\frac{1}{|\mathcal{H}_{2g+2}|}\sum_{D\in \mathcal{H}_{2g+2}}L(\tfrac{1}{2},\chi_{D})=P(1)(g+1)+\frac{P'(1)}{\log q}-P(1)\zeta_{\mathbb{A}}\left(\frac{1}{2}\right)+O\left(2^{g+1}q^{-\frac{g}{2}}\right),
\]
where $P(s)=\prod_{P}\left(1-(1+|P|)^{-1}|P|^{-s}\right)$ and $\zeta_{\mathbb{A}}\left(\frac{1}{2}\right)$ is defined in Section $2$.

\vspace{2mm}
\noindent
Andrade \cite{AND} established the following lower bound:
\begin{theorem}[Andrade]\label{Th3}
	For every even natural number $k$, we have 
	\begin{equation*}
	\frac{1}{|\mathcal{H}_{n}|}\sum_{D\in \mathcal{H}_{n}}L(\tfrac{1}{2},\chi_{D})^{k} \gg_{k} \; n^{\tfrac{k(k+1)}{2}}.
	\end{equation*}
\end{theorem}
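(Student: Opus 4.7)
The plan is to adapt the Rudnick--Soundararajan mollifier argument for moments of quadratic Dirichlet $L$-functions to the hyperelliptic ensemble. Introduce the Dirichlet polynomial
$$A(\chi_D) \;:=\; \sum_{\substack{f\in\mathbb{F}_q[t]\;\text{monic}\\ \deg f\le y}} \frac{a(f)\,\chi_D(f)}{|f|^{1/2}},$$
where $y$ is a small constant multiple of $n$ and $a(f)$ is a multiplicative weight modelled on the $(k-1)$-fold divisor function of $\mathbb{F}_q[t]$, chosen so that $A(\chi_D)$ plays the role of a partial-sum approximation of $L(\tfrac12,\chi_D)^{k-1}$. Since $k$ is even and the character $\chi_D$ is real, $L(\tfrac12,\chi_D)^{k}\ge 0$ and $A(\chi_D)^{k}\ge 0$, and Hölder's inequality with conjugate exponents $(k,\tfrac{k}{k-1})$ rearranges to
$$\sum_{D\in\mathcal{H}_n} L(\tfrac12,\chi_D)^{k}\;\ge\; \frac{\bigl|\sum_{D\in\mathcal{H}_n} L(\tfrac12,\chi_D)\,A(\chi_D)^{k-1}\bigr|^{k}}{\bigl(\sum_{D\in\mathcal{H}_n}A(\chi_D)^{k}\bigr)^{k-1}}.$$
It therefore suffices to bound the ``twisted first moment'' $\sum_D L\,A^{k-1}$ from below and the ``mollifier moment'' $\sum_D A^{k}$ from above, and to balance the two so that the ratio yields $|\mathcal{H}_n|\cdot n^{k(k+1)/2}$.

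Both moments are handled by the same scheme. Using the fact that $\mathcal{L}(u,\chi_D)$ is a polynomial of degree $2g$ in $u=q^{-s}$, one has an \emph{exact} Dirichlet-polynomial representation of $L(\tfrac12,\chi_D)$ of length $\asymp n$ (the function-field analogue of the approximate functional equation). Expanding the powers of $A$ into their Dirichlet coefficients and swapping summations reduces the problem to evaluating averages of the form
$$\sum_{D\in\mathcal{H}_n}\chi_D(h),\qquad h\in\mathbb{F}_q[t]\text{ monic}.$$
Sieving the squarefreeness of $D$ via M\"obius and invoking the Riemann hypothesis for curves (Weil's bound) for the resulting short character sums shows that non-square $h$ contribute a power-saving error, while the main contribution comes from $h=\square$ and is proportional to $|\mathcal{H}_n|$ times an Euler-product factor supported on the prime divisors of $h$. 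Collating these diagonal contributions, the twisted first moment evaluates to $|\mathcal{H}_n|\cdot P_k(n)$ with $\deg P_k=k(k+1)/2$ and the mollifier moment to $|\mathcal{H}_n|\cdot Q_k(n)$ with a matching degree, so that the Hölder ratio recovers precisely $n^{k(k+1)/2}$ after cancelling one factor of $|\mathcal{H}_n|$.

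The main obstacle is the careful evaluation of these two diagonal sums. One must perform an Euler-product computation over monic irreducibles of $\mathbb{F}_q[t]$ of degree up to $y$, extract the leading polynomial-in-$n$ behaviour, and verify that with the right choice of the multiplicative weights $a(\cdot)$ and the length $y\asymp n/k$ the two main terms are \emph{simultaneously} balanced so that Hölder returns the sharp exponent $k(k+1)/2$. The remaining ingredients -- controlling off-diagonal tuples, non-square arguments of $\chi_D$, and the M\"obius inclusion--exclusion implementing the squarefreeness of $D$ -- are, by comparison, routine once Weil's bound is in hand.
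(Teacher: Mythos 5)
Your global strategy---mollify by a Dirichlet polynomial $A$, feed $L\cdot A^{k-1}$ and $A^k$ into H\"older, and evaluate both sums by a diagonal/off-diagonal split using M\"obius sieving and Weil's bound---is indeed the route the cited reference takes (Andrade's paper is literally titled ``Rudnick and Soundararajan's theorem for function fields''), and the H\"older rearrangement you wrote down is correct. The error is in your choice of mollifier weight, and it is not cosmetic.

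You propose $a(f)\approx\tau_{k-1}(f)$ so that $A\approx L(\tfrac12,\chi_D)^{k-1}$. But the sharpness condition for the inequality $\big|\sum_D L\,A^{k-1}\big|\le\big(\sum_D L^k\big)^{1/k}\big(\sum_D A^k\big)^{(k-1)/k}$ is that $|A|$ be approximately proportional to $|L|$, not to $|L|^{k-1}$; equivalently, the multiplicative weight must model the constant function $1$, so that $A$ is a truncated partial sum of $L$ itself. The mismatch shows up directly in the degrees. If $A$ models $L^r$, then the twisted first moment $\sum_D L\,A^{k-1}$ has diagonal main term of order $|\mathcal{H}_n|\,n^{(1+r(k-1))(2+r(k-1))/2}$ while the mollifier moment $\sum_D A^k$ has main term of order $|\mathcal{H}_n|\,n^{rk(rk+1)/2}$, so the H\"older ratio produces $|\mathcal{H}_n|\,n^{E(r)}$ with
\begin{equation*}
E(r)=k\cdot\frac{(1+r(k-1))(2+r(k-1))}{2}-(k-1)\cdot\frac{rk(rk+1)}{2}=k+\frac{k(k-1)}{2}\,r(2-r).
\end{equation*}
Since $r(2-r)=1-(r-1)^2\le 1$, this is maximised precisely at $r=1$, giving $E(1)=k(k+1)/2$; every other $r$ gives strictly less. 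Your choice $r=k-1$ gives $E(k-1)=k+\tfrac12 k(k-1)^2(3-k)$, which equals $-14$ already at $k=4$ and decreases rapidly thereafter. So with your weight the two moments are \emph{not} simultaneously balanced, the degrees of $P_k$ and $Q_k$ do not come out to $k(k+1)/2$, and the resulting lower bound is far weaker than $n^{k(k+1)/2}$ for every $k\ge4$ (the case $k=2$ only survives because $\tau_{k-1}=\tau_1\equiv 1$ there). Replace the weight by $a(f)\approx1$, so that $A(\chi_D)$ is a truncation of $L(\tfrac12,\chi_D)$ itself with length $y\asymp n/k$, and the rest of your outline---exact polynomial representation of $L$ in $u=q^{-1/2}$, the square versus non-square dichotomy, the Euler-product evaluation of the diagonal, and the length balancing---is correct and goes through exactly as in Rudnick--Soundararajan and Andrade.
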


\noindent
On the other hand,  A. Florea [\cite{FL3}, Theorem $2.7$]  found the following upper bound for a single shifted $L$-function associated with hyperelliptic curves:
\begin{theorem}[Florea]\label{florea}
	 Let $v=e^{i \theta}$, with $\theta \in [0, \pi)$. Then for every positive $k$ and any $\epsilon >0$,
	\begin{align*}
	\sum_{D\in \mathcal{H}_{2g+1}} \big|\mathcal{L}\Big(\frac{v}{\sqrt{q}},\chi_{D}\Big)\big|^{k}\ll_{k,\ve} q^{2g+1} \; g^{\ve}\, \exp\left(k\, \mathcal{M}(v,g)\,+\,\frac{k^{2}}{2}\mathcal{V}(v,g)\right), 
	\end{align*}
	
	where $\mathcal{M}(v,g)=\frac{1}{2}\log\left(\min\{g,\frac{1}{2\theta}\} \right)\quad $ and $\quad \mathcal{V}(v,g)=\mathcal{M}(v,g) + \frac{1}{2}\log g$.
\end{theorem}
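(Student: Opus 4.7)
The plan is to adapt Soundararajan's moment method to the function field setting, exactly along the lines he used for $|\zeta(\tfrac12+it)|^{2k}$ and later applied by Chandee in the shifted case. The key input that makes this clean in the hyperelliptic ensemble is the Riemann hypothesis for curves: all $2g$ inverse roots $\alpha_j$ of $\mathcal{L}(u,\chi_D)=\prod_{j=1}^{2g}(1-\alpha_j u)$ satisfy $|\alpha_j|=\sqrt{q}$, so zeros sit on the circle $|u|=q^{-1/2}$ on which we are evaluating. First I would prove a pointwise Soundararajan-type bound of the form
\begin{align*}
\log\Bigl|\mathcal{L}\bigl(\tfrac{v}{\sqrt{q}},\chi_D\bigr)\Bigr|
\;\leq\;
\Re\sum_{n=1}^{x} \frac{v^{n}}{n\, q^{n/2}}\sum_{\substack{f\text{ monic}\\ \deg f=n}}\Lambda(f)\,\chi_D(f)\,w_x(n)
\;+\;\frac{2g}{x}\log q \;+\; O(1),
\end{align*}
valid for any integer parameter $1\le x\le 2g$ and a smooth weight $0\le w_x\le 1$. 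This is the polynomial-ring analog of Soundararajan's lemma; it follows from writing $\log|1 - v e^{-i\gamma_j}|$ via a Dirichlet expansion, truncating at length $x$, and estimating the tail through the number of zeros in short arcs on the unit circle of $u$.

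Next I would split the short Dirichlet polynomial into a piece $\mathcal{P}_1(D)$ over primes of degree at most some $y$ and a piece $\mathcal{P}_2(D)$ over primes of degree between $y$ and $x$, and apply a Chebyshev bound: for real $V$,
\begin{align*}
\#\bigl\{D\in\mathcal{H}_{2g+1}\,:\,\log|\mathcal{L}(v/\sqrt{q},\chi_D)|\ge V\bigr\}
\;\ll\; V_1^{-2\ell_1}\!\sum_{D}|\mathcal{P}_1(D)|^{2\ell_1}
+V_2^{-2\ell_2}\!\sum_{D}|\mathcal{P}_2(D)|^{2\ell_2},
\end{align*}
with $V=V_1+V_2+O(1)$. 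Expanding $|\mathcal{P}_j|^{2\ell_j}$ and invoking the quadratic orthogonality in the hyperelliptic ensemble,
\begin{align*}
\sum_{D\in\mathcal{H}_{2g+1}}\chi_D(f)
\;=\;\begin{cases} |\mathcal{H}_{2g+1}|\bigl(1+O(q^{-g})\bigr), & f=\square,\\[2pt] O\!\bigl(q^{\deg f/2}\bigr), & \text{otherwise},\end{cases}
\end{align*}
valid for $\deg f$ sufficiently small relative to $g$, reduces the problem to a combinatorial estimate on sums over prime-power tuples; the off-diagonal terms are absorbed into the $q^{2g+1}\cdot g^\varepsilon$ factor, leaving a Gaussian-like moment governed by the diagonal. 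Optimising $\ell_1,\ell_2$ (in particular $\ell_1\asymp V/\log\min\{g,1/\theta\}$ in the relevant range) produces a tail bound of the form
\[
q^{-(2g+1)}\,\#\{D:\log|\mathcal{L}|\ge V\}\;\ll\;\exp\!\Bigl(-\tfrac{(V-\mathcal{M}(v,g))^2}{2\,\mathcal{V}(v,g)}\,(1+o(1))\Bigr)+\text{small}.
\]

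The claimed upper bound is then extracted from the layer-cake identity
$\sum_{D}|\mathcal{L}|^{k}=k\int_{-\infty}^{\infty}e^{kV}\,\#\{D:\log|\mathcal{L}|\ge V\}\,dV$,
optimising the exponent at $V=\mathcal{M}+k\mathcal{V}$, which yields the prefactor $\exp(k\mathcal{M}+\tfrac{k^2}{2}\mathcal{V})$ as well as the loss $g^{\varepsilon}$. The transition in $\mathcal{M}(v,g)=\tfrac12\log\min\{g,\tfrac{1}{2\theta}\}$ is dictated by the identity $\Re\sum_{n\le x}\cos(n\theta)/n=\tfrac12\log\min\{x,1/\theta\}+O(1)$: for $\theta\gtrsim 1/g$ the partial sum saturates at $\tfrac12\log(1/\theta)$, while for $\theta\lesssim 1/g$ it is $\tfrac12\log g$, mirroring Chandee's bound in the Archimedean case with $g\log q$ playing the role of $\log T$.

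\textbf{Main obstacle.} The hard part is uniformity in $\theta\in[0,\pi)$, especially the near-real regime $\theta=O(1/g)$ where $\mathcal{L}$ can be genuinely large and the mean/variance transition in $(\mathcal{M},\mathcal{V})$ must be captured sharply. This forces one to choose the parameters $x$, $y$, $\ell_1$, $\ell_2$ simultaneously as functions of $V$, $\theta$, and $g$ so that (a) the truncation error $2g\log q/x$ does not swamp the mean, (b) the off-diagonal contribution from the orthogonality relation stays under $g^\varepsilon$, and (c) the prime-square terms produce exactly the variance $\mathcal{V}(v,g)$ with constants independent of $\theta$. Calibrating these thresholds is the delicate combinatorial step, and it is where the identity controlling $\sum\cos(n\theta)/n$ is repeatedly used.
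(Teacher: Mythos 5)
Your proposal follows essentially the same strategy the paper uses: it deduces the statement by specializing its Theorem 1.5, whose proof is exactly Soundararajan's moment method — the function-field analog of the pointwise $\log|\mathcal{L}|$ inequality (the paper's Lemma 3.4), a split of the resulting Dirichlet polynomial over primes into low- and high-degree ranges, Chebyshev/Markov tail bounds via even moments and orthogonality in $\mathcal{H}_n$ (Lemmas 3.2, 3.3, 3.6), the cosine-sum identity $\sum_{m\le n}\cos(m\theta)/m\le \log\min\{n,1/|\theta|\}+O(1)$ (Lemma 3.5) to produce the $\mathcal{M},\mathcal{V}$ transition, and a layer-cake integration with parameters $N,N_0,\ell$ chosen as functions of $V$. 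No meaningful divergence from the paper's route; your outline, including the identification of the $\theta=O(1/g)$ transition as the delicate regime, matches the argument actually carried out.
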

\subsubsection{Shifted moments and main results}
 In this article, for fixed $m$-tuple $\bm{k}^{(m)}=(k_{1},\ldots,k_{m})\in \mathbb{N}^{m}$, we shall investigate the following mean values of the product of $m$-shifted  quadratic Dirichlet $L$-functions: 
\begin{align}\label{Shifted product of L-functions}
\mathcal{S}_n(\bm{v}^{(m)},\bm{k}^{(m)}):= \sum_{D\in \mathcal{H}_{n}} \mathcal{L}\Big(\frac{v_{1}}{q^{\frac{1}{2}+\alpha_{1}}},\chi_{D}\Big)^{2k_1} \ldots \mathcal{L}\Big(\frac{v_{m}}{q^{\frac{1}{2}+\alpha_{m}}},\chi_{D}\Big)^{2k_m},
\end{align}
where $\bm{v}^{(m)}=(v_{1},\ldots, v_{m})\in \mathbb{C}^{m}$ with $v_j=e^{i\theta_j}, \, \theta_j\in (0, \pi]$ and $\alpha_j\in [0, \frac{1}{2})$ for $j=1, \ldots, m$. 
Also, $\theta_j=\theta_j(g)$ is a real valued function of $g$ such that $\displaystyle\lim_{g\to \infty}g |\theta_j|$ and for $i\neq j$, $\displaystyle\lim_{g \to \infty}g |\theta_i-\theta_j|$ exists or equals $\infty$. Note that one can obtain the moments of $\mathcal{L}\Big(\frac{v}{\sqrt{q}}, \chi _D\Big)$ by allowing the shifts $\alpha_j$ to tend to $0$.

 \noindent Throughout the article, we follow that $n$ and $g$ are connected via \eqref{genus}.
Before stating our main results, let us define
\begin{align}
\label{mu}
&\mu(\bm{v}^{(m)},g)=\displaystyle\sum_{j=1}^{m}k_j \log\left(\min\Big\{\frac{1}{2|\theta_{j}|}, g \Big\} \right), \\
\label{sigma} &\sigma(\bm{v}^{(m)},g)=2\left(\sum_{j=1}^{m}k_j^2\right)\log{g}\, + \, 2\sum_{j=1}^{m}k_j^2 \log\left(\min\Big\{\frac{1}{2|\theta_{j}|}, g \Big\} \right) \\
&+\,4\displaystyle\sum_{i<j}k_i k_j \left( \log\left(\min\Big\{\tfrac{1}{ | \theta_{i}-\theta_{j}  |},g \Big\} \right)+ \log\left(\min\Big\{\tfrac{1}{ | \theta_{i}+\theta_{j}  |},g \Big\} \right)\right) \nonumber.
\end{align}
For $m=2$, we set
\begin{align}\label{notation of W}
W=\{j\in \{1, 2\}: \lim_{g\to \infty}g |\theta_j|<\infty \} \, \text{ and }\, W^{c}=\{j\in \{1, 2\}: \lim_{g\to \infty}g |\theta_j|=\infty\}. 
\end{align}
 We also define a constant depending on $W$ and  $W^c$ as 
\begin{align}\label{contant of main theorem}	
c_{\bm{v}^{(2)}}=\max \left\{\underset{g\to \infty}{\lim}\;g|\theta_{1}|,\, \underset{g\to \infty}{\lim}\;g|\theta_{2}|,\,\underset{g\to \infty}{\lim}g|\theta_{1}-\theta_{2}|, \, \underset{g\to \infty}{\lim}g|\theta_{1}+ \theta_{2}|\right\},
\end{align}
where maximum is taken only over the finite entries of the set.
\begin{remark}
If $|W|=2$ and $|W^c|=0$ then 
\[
c_{\bm{v}^{(2)}}=\max \left\{\underset{g\to \infty}{\lim}\;g|\theta_{1}|,\, \underset{g\to \infty}{\lim}\;g|\theta_{2}|,\,\underset{g\to \infty}{\lim}g|\theta_{1}-\theta_{2}|, \, \underset{g\to \infty}{\lim}g|\theta_{1}+ \theta_{2}|\right\}.
\]
If $W=\{1\}$ and $W^c=\{2\}$ then $c_{\bm{v}^{(2)}}=\lim_{g\to \infty}g|\theta_1|$.  If $W=\emptyset$ and $\displaystyle\lim_{g\to \infty}g|\theta_1-\theta_2|<\infty$, then $c_{\bm{v}^{(2)}}=\displaystyle\lim_{g\to \infty}g|\theta_1-\theta_2|$. When none of the limit is finite then $c_{\bm{v}^{(2)}}$ is an absolute constant.
\end{remark}
\noindent
We obtain a lower bound (of the conjectured order of magnitude\footnote{The conjectural order of magnitude of these $L$-functions in the  hyperelliptic ensemble can be compared with the autocorrelation of the random matrix polynomials (for example, see [\cite{CFKRS2}, Eqs. $(3.6)$ and $(4.19)$]). }) for $\mathcal{S}_n(\bm{v}^{(2)}, \bm{k}^{(2)})$ in the large degree limit i.e. when $n$ is sufficiently large and $q$ is fixed.

\begin{theorem}\label{Th1}
	Let $\bm{k}^{(2)}$ and $\bm{v}^{(2)}$ be as earlier. Assume that $\alpha_j=O\left(\frac{1}{g}\right)$ for $j=1, 2$. Then for $n$ large,  
	\begin{equation*}
	\frac{1}{|\mathcal{H}_{n}|}\sum_{D\in \mathcal{H}_n}\bigg|\mathcal{L}\Big(\frac{v_{1}}{q^{\frac{1}{2}+\alpha_{1}}},\chi_{D}\Big)\bigg|^{2k_1} \bigg|\mathcal{L}\Big(\frac{v_{2}}{q^{\frac{1}{2}+\alpha_{2}}},\chi_{D}\Big)\bigg|^{2k_2}  \gg_{\bm{k}^{(2)}, c_{\bm{v}^{(2)}}} \exp\left(\mu\left(\bm{v}^{(2)},g\right)+\frac{1}{2}\sigma\left(\bm{v}^{(2)},g\right)\right),
	\end{equation*}
	where $\mu\left(\bm{v}^{(2)},g\right)$, $\sigma\left(\bm{v}^{(2)},g\right)$ and $c_{\bm{v}^{(2)}}$ are defined by \eqref{mu}, \eqref{sigma} and \eqref{contant of main theorem} respectively.
\end{theorem}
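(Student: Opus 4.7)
The plan is to prove the lower bound by a Cauchy--Schwarz argument with a carefully chosen mollifier, following the Rudnick--Soundararajan \cite{RS} lower-bound technique in the function field formulation of Andrade \cite{AND} (see Theorem~\ref{Th3}), extended to the two-$L$-function shifted setting. For each $j\in\{1,2\}$, I introduce the length-$x_j$ Dirichlet polynomial
\begin{equation*}
A_j(D)\;=\;\sum_{\substack{f\,\text{monic}\\ \deg f\,\leq\, x_j}}\frac{d_{k_j}(f)\,v_j^{\deg f}\,\chi_D(f)}{|f|^{\,1/2+\alpha_j}},
\end{equation*}
which is the truncation of the Dirichlet series for $\mathcal{L}(v_j/q^{1/2+\alpha_j},\chi_D)^{k_j}$; the parameter $x_j$ will be taken proportional to $g$, small enough that diagonal terms dominate. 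Writing $\mathcal{L}_j:=\mathcal{L}(v_j/q^{1/2+\alpha_j},\chi_D)$, Cauchy--Schwarz gives
\begin{equation*}
\sum_{D\in\mathcal{H}_n}|\mathcal{L}_1|^{2k_1}|\mathcal{L}_2|^{2k_2}\;\geq\;\frac{\bigl|\,\sum_{D\in\mathcal{H}_n}\mathcal{L}_1^{k_1}\mathcal{L}_2^{k_2}\,\overline{A_1(D)}\,\overline{A_2(D)}\,\bigr|^{\,2}}{\sum_{D\in\mathcal{H}_n}|A_1(D)|^{2}|A_2(D)|^{2}}.
\end{equation*}
Asymptotic analysis of the numerator (a ``twisted first moment'') and of the denominator (a pure mollifier square) will each yield a quantity whose size matches $\exp(\mu+\tfrac12\sigma)$ up to multiplicative constants depending on $\bm{k}^{(2)}$ and on the finite limits collected in $c_{\bm{v}^{(2)}}$; the ratio then produces the claimed bound.

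Both the numerator and denominator reduce, upon expanding each $\mathcal{L}_j^{k_j}$ as a Dirichlet series and interchanging summations, to averages of quadratic character sums $\sum_{D\in\mathcal{H}_n}\chi_D(F)$, where $F$ is a product of four monic polynomials weighted by divisor functions $d_{k_j}$ and by the shift factors $v_i^{\deg(\cdot)}\bar v_j^{\deg(\cdot)}\,q^{-(\alpha_i+\alpha_j)\deg(\cdot)}$. Using the standard estimate for these character sums over $\mathcal{H}_n$, the average equals $|\mathcal{H}_n|\cdot\mathbbm{1}(F=\square)$ times a local Euler factor, plus an off-diagonal error controlled by the Weil bound for curves. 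The diagonal condition $F=\square$ admits four combinatorial pairings: matching a factor from $\mathcal{L}_i^{k_i}$ with one from $\overline{A_j}$ produces the ``conjugate'' resonance at $\theta_i-\theta_j$, while matching two factors from $\mathcal{L}_i^{k_i}$ and $\mathcal{L}_j^{k_j}$ (or two from the mollifiers) produces the ``symplectic'' resonance at $\theta_i+\theta_j$. Summing each resulting Euler product near its pole by the prime polynomial theorem converts a resonance at shift $\delta$ into a factor $\log\min(1/|\delta|,g)$; these are precisely the four min-logarithms building $\mu$ and $\sigma$. The hypothesis $\alpha_j=O(1/g)$ keeps $q^{-\alpha_j\deg f}$ bounded on $\deg f\leq O(g)$, so the real shifts $\alpha_j$ do not distort this resonance structure.

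The \emph{main obstacle} is the accurate handling of the symplectic pairings $\theta_i+\theta_j$, which arise because $\chi_D$ is real and the square condition on $F$ couples $\mathcal{L}_i^{k_i}$ with $\mathcal{L}_j^{k_j}$ (not only with $\overline{\mathcal{L}_j}^{k_j}$); this is the manifestation, at the level of the diagonal, of the symplectic functional equation $v\leftrightarrow v^{-1}$ enjoyed by these $L$-functions, and is what forces the $\theta_i+\theta_j$ terms in \eqref{sigma}. A secondary issue is the separate treatment of the regimes classified by the set $W$ of \eqref{notation of W}: depending on whether $g|\theta_j|$ (resp.\ $g|\theta_i\pm\theta_j|$) stays bounded or tends to infinity, the corresponding min equals $1/(2|\theta_j|)$ (resp.\ $1/|\theta_i\pm\theta_j|$) or $g$, and the implicit constant in the lower bound absorbs the bounded limits through $c_{\bm{v}^{(2)}}$. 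Throughout, the length parameters $x_j$ must be small enough that the off-diagonal error is negligible yet large enough that $A_j$ captures the bulk of the mass of $\mathcal{L}_j^{k_j}$; a choice $x_j\asymp g/(k_1+k_2+1)$ accomplishes both.
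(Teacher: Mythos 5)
Your proposal is correct and follows essentially the same route as the paper: a Cauchy--Schwarz argument with a truncated Dirichlet polynomial (you use $A_1A_2$, the product of two separate truncations, whereas the paper truncates the full product $\mathcal{L}_1^{k_1}\mathcal{L}_2^{k_2}$ to length $(k_1+k_2)X$, but this is a cosmetic variant that does not change the diagonal/off-diagonal structure), reduction of numerator and denominator to averaged character sums $\sum_D\chi_D(F)$, and extraction of the $\theta_i\pm\theta_j$ resonance factors via the prime polynomial theorem together with a case split on whether $g|\theta_j|$, $g|\theta_i\pm\theta_j|$ stay bounded. Your identification of the symplectic pairings $\theta_i+\theta_j$ as the key diagonal feature, and the regime analysis through the set $W$ with the implied constant absorbing $c_{\bm{v}^{(2)}}$, matches the paper's proof.
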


In this article, we will provide the complete proof of the Theorem \ref{Th1} and from observations in the footnotes [\ref{Footnote $2$}, \ref{Footnote $5$}, \ref{Footnote $8$}], one can easily extend Theorem \ref{Th1} to the following form. 
\begin{theorem}\label{Th1.2}
	Let $\bm{k}^{(m)}$ and $\bm{v}^{(m)}$ be as earlier. Assume that $\alpha_j=O\left(\frac{1}{g}\right)$ for all $j$. Then for $n$ large,
	\begin{equation*}
	\frac{1}{|\mathcal{H}_{n}|} \sum_{D\in \mathcal{H}_{n}} \bigg|\mathcal{L}\Big(\frac{v_{1}}{q^{\frac{1}{2}+\alpha_{1}}},\chi_{D}\Big)\bigg|^{2k_1} \ldots \bigg|\mathcal{L}\Big(\frac{v_{m}}{q^{\frac{1}{2}+\alpha_{m}}},\chi_{D}\Big) \bigg|^{2k_m}\gg_{\bm{k}^{(m)}, \bm{v}^{(m)}} \exp\left(\mu\left(\bm{v}^{(m)},g\right)+\frac{1}{2}\sigma\left(\bm{v}^{(m)},g\right)\right),
	\end{equation*} 
		where $\mu\left(\bm{v}^{(m)},g\right)$ and $\sigma\left(\bm{v}^{(m)},g\right)$ are defined by \eqref{mu} and \eqref{sigma} respectively.
\end{theorem}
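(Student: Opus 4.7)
The plan is to extend the Rudnick-Soundararajan style lower-bound argument used for Theorem~\ref{Th1} from $m=2$ to general $m$, with the additional pairwise cross terms in $\mu$ and $\sigma$ arising by iterating the same Euler product calculation over all unordered pairs $(i,j)$. For each $j \in \{1,\dots,m\}$ I would introduce a short Dirichlet polynomial
\begin{align*}
A_j(D) := \sum_{\substack{f \in \mathbb{F}_q[t] \text{ monic}\\ \deg f \le x_j}} \frac{d_{k_j}(f)\, \chi_D(f)\, v_j^{\deg f}}{|f|^{1/2 + \alpha_j}},
\end{align*}
where $d_{k_j}$ is the multiplicative coefficient making $A_j(D)$ a truncated model of $\mathcal{L}\bigl(v_j q^{-1/2-\alpha_j},\chi_D\bigr)^{k_j - 1}$, and $x_j$ is chosen as a small constant multiple of $\min\{g,\,1/|\theta_j|\}$ according to the regime in \eqref{notation of W} (extended from $\{1,2\}$ to $\{1,\dots,m\}$).

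The key analytic step is to estimate the two auxiliary sums
\begin{align*}
S_1 := \sum_{D \in \mathcal{H}_n} \prod_{j=1}^m \mathcal{L}\!\left(\tfrac{v_j}{q^{1/2+\alpha_j}},\chi_D\right) |A_j(D)|^{2k_j - 1}, \qquad S_2 := \sum_{D \in \mathcal{H}_n} \prod_{j=1}^m |A_j(D)|^{2k_j},
\end{align*}
and to apply the same H\"older step as in the $m=2$ argument, yielding an inequality of the form $|S_1|^{2K} \le S_2^{2K-1}\,\sum_{D}\prod_j|\mathcal{L}_j|^{2k_j}$ for a suitable $K$ depending on the $k_j$. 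The desired lower bound then follows from a matching lower bound on $|S_1|$ and an upper bound on $S_2$. Both sums expand as multiple Dirichlet series whose inner character sum $\sum_{D \in \mathcal{H}_n} \chi_D(f)$ is essentially supported on squares via the standard orthogonality results for $\mathcal{H}_n$, so the diagonal contribution produces an Euler product over monic irreducibles $P \in \mathbb{F}_q[t]$ whose logarithm, after applying the prime polynomial theorem, is essentially
\begin{align*}
\sum_{j=1}^m k_j^2 \sum_{P} \frac{1}{|P|^{1+2\alpha_j}} \;+\; \sum_{i \neq j} k_i k_j \sum_{P} \frac{v_i^{\deg P}\, v_j^{\pm \deg P}}{|P|^{1+\alpha_i+\alpha_j}} \;+\; O(1).
\end{align*}
The ``diagonal'' pieces produce the $k_j^2 \log g$ and $k_j^2 \log\min\{1/(2|\theta_j|),\,g\}$ contributions in \eqref{sigma}, while the cross terms produce the $\log\min\{1/|\theta_i \pm \theta_j|,\,g\}$ factors, matching \eqref{sigma} term by term. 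The single factors $\prod_j \mathcal{L}_j$ appearing in $S_1$ produce the $\mu$-contribution in an entirely analogous way.

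The main obstacle, and the reason the authors route the general case through footnoted observations rather than writing it out, is combinatorial bookkeeping: for each pair $(i,j)$ with $i \neq j$ one must separately decide whether $|\theta_i - \theta_j|$ and $|\theta_i + \theta_j|$ are $\gg 1/g$ or $\ll 1/g$, and truncate the corresponding Euler sums at the correct scale, generalizing the partition $W \sqcup W^c$ of \eqref{notation of W}. The footnotes referenced in the statement of Theorem~\ref{Th1.2} record precisely the insensitivity of each step of the $m=2$ proof to this case analysis. Since every pairwise interaction is structurally identical to the single interaction handled for $m=2$, and the H\"older step extends mechanically to $m$ factors, no new analytic input is required beyond iterating the Euler product evaluation over the $\binom{m}{2}$ pairs and keeping track of the corresponding constant $c_{\bm{v}^{(m)}}$ defined by the obvious generalization of \eqref{contant of main theorem}.
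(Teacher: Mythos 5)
Your overall strategy (a resonator Dirichlet polynomial, a H\"older/Cauchy--Schwarz step, and an evaluation of the diagonal where phase cancellation in prime sums produces the $\min\{1/|\theta_i\pm\theta_j|,g\}$ factors) is the right family of ideas, and your observation that general $m$ only adds structurally identical pairwise interactions agrees with the paper, whose proof of Theorem \ref{Th1.2} is exactly the $m=2$ argument with the truncation point changed to $(k_1+\cdots+k_m)X$, $X=\frac{g}{2(k_1+\cdots+k_m)}$, and with possibly several ``pair shifts'' in each of the sets $W_{\eps 2}, W^{c}_{\eps 2}$. However, two of your specific choices create genuine gaps. First, the H\"older step as stated is not valid for unequal exponents: with $S_1=\sum_D\prod_j\mathcal{L}_j|A_j|^{2k_j-1}$ and $S_2=\sum_D\prod_j|A_j|^{2k_j}$, the inequality $|S_1|^{2K}\le S_2^{2K-1}\sum_D\prod_j|\mathcal{L}_j|^{2k_j}$ would require the pointwise splitting $|\mathcal{L}_j|=\big(|\mathcal{L}_j|^{2k_j}\big)^{1/(2K)}$ and $|A_j|^{2k_j-1}=\big(|A_j|^{2k_j}\big)^{(2K-1)/(2K)}$ simultaneously for every $j$, which forces $k_1=\cdots=k_m=K$; for a general tuple $\bm{k}^{(m)}\in\mathbb{N}^m$ no single exponent $K$ works. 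The paper avoids this entirely: since the $k_j$ are integers, it writes the target as $\sum_D\big|\prod_j\mathcal{L}_j^{k_j}\big|^2$, takes one resonator, namely the truncation $\mathcal{L}_{\le(k_1+\cdots+k_m)X}$ of the product $\prod_j\mathcal{L}_j^{k_j}$ itself, and uses plain Cauchy--Schwarz; moreover the ``matching lower bound on $|S_1|$'' you assert is obtained there by a Perron-type contour argument expressing the full product as its own truncation plus an error controlled by the Lindel\"of bound, a step that needs the resonator to be precisely that truncation.

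Second, your per-factor truncation $x_j\asymp\min\{g,1/|\theta_j|\}$ is both unnecessary and harmful. In this method the lower bound ultimately proved is essentially $S_2$ (normalized), and the factors $\log\min\{1/|\theta_i\pm\theta_j|,g\}$ must come from cancellation in diagonal sums of the shape $\sum_{d(P)\le y}\cos\big((\theta_i\pm\theta_j)d(P)\big)/|P|\approx\log\min\{y,1/|\theta_i\pm\theta_j|\}$ (Lemma \ref{Le9}), with $y$ the common truncation length; accordingly the paper truncates at total degree $g/2$ irrespective of the $\theta_j$, and the $\min$'s emerge from the residue computation with the factors $\mathcal{Z}\big(ue^{2i\epsilon_j\theta_j}\big)$ and $\mathcal{Z}\big(ue^{i(\epsilon_1\theta_1+\epsilon_2\theta_2)}\big)$ near $u=1/q$, not from shortening the polynomial. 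If instead you cut $A_j$ at $x_j\asymp 1/|\theta_j|\ll g$, then for a pair with $|\theta_i-\theta_j|\ll 1/g$ but $|\theta_j|\gg 1/g$ (the last regime of Corollary \ref{Cor2}) the cross contribution is capped at $\log x_j$ rather than $\log g$, and the bound you obtain falls short of $\exp\left(\mu+\tfrac{1}{2}\sigma\right)$ by a power of $g$. Repairing the proposal essentially means adopting the paper's choices: a single resonator equal to the truncated product of total degree $g/2$, Cauchy--Schwarz, and the residue/contour analysis with the finite/infinite ``single shift'' and ``pair shift'' case distinctions to evaluate $S_2$.
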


\noindent
We also establish an upper bound of nearly the conjectured order of magnitude for the sum $\mathcal{S}_n(\bm{v}^{(m)}, \bm{k}^{(m)})$.

\begin{theorem}\label{Th2} With the assumption as in the Theorem \ref{Th1.2}, for any $\epsilon>0$,
	\begin{equation*}
	\frac{1}{|\mathcal{H}_{n}|}\sum_{D\in \mathcal{H}_{n}} \bigg|\mathcal{L}\Big(\frac{v_{1}}{q^{\frac{1}{2}+\alpha_{1}}},\chi_{D}\Big)\bigg|^{2k_1} \ldots \bigg|\mathcal{L}\Big(\frac{v_{m}}{q^{\frac{1}{2}+\alpha_{m}}},\chi_{D}\Big) \bigg|^{2k_m}\ll_{\bm{k}^{(m)}, \varepsilon} n^{\ve} \exp\left(\mu\left(\bm{v}^{(m)},g\right)+\frac{1}{2}\sigma\left(\bm{v}^{(m)},g\right)\right),
	\end{equation*} 
	where $\mu\left(\bm{v}^{(m)},g\right)$ and $\sigma\left(\bm{v}^{(m)},g\right)$ are defined by \eqref{mu} and \eqref{sigma} respectively.
		
\end{theorem}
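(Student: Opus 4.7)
The plan is to extend the Florea--Soundararajan--Harper strategy (carried out for a single $L$-value in Theorem~\ref{florea}) to the product of $m$ shifted $L$-values, by exploiting the fact that the logarithm of the whole product is itself a single Dirichlet polynomial over primes of $\mathbb{F}_q[t]$, with coefficients that depend linearly on $\bm k^{(m)}$ and oscillate in $\bm v^{(m)}$. Concretely, I would apply Florea's function-field analogue of Soundararajan's pointwise inequality to each factor: for any parameter $X\le n$,
\[
\log\bigl|\mathcal{L}\bigl(v_j q^{-1/2-\alpha_j},\chi_D\bigr)\bigr|\;\le\;\Re\sum_{\deg(P^l)\le X}\frac{\chi_D(P)^l\,v_j^{l\deg P}}{l\,|P|^{l(1/2+\alpha_j)}}\;+\;\frac{cn}{X}+O(1),
\]
multiply by $2k_j$, and sum over $j$. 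This packages all $m$ factors into
\[
\prod_{j=1}^m\bigl|\mathcal{L}\bigl(v_j q^{-1/2-\alpha_j},\chi_D\bigr)\bigr|^{2k_j}\;\le\; e^{O(n/X)}\exp\!\Bigl(\Re\sum_{P,l}\tfrac{\chi_D(P)^l\,A_{P,l}}{l\,|P|^{l/2}}\Bigr),
\]
with coefficients $A_{P,l}=2\sum_{j=1}^m k_j v_j^{l\deg P}q^{-l\alpha_j\deg P}$ that encode every shift at once.

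Next I would dissect the prime sum into geometric scales $\deg P\in(y_{r-1},y_r]$ and run a Harper-type partitioning argument: on the event that each scale-$r$ partial sum $V_D^{(r)}$ stays below a well-chosen cutoff, the exponential $e^{V_D}$ is majorised by a product of truncated Taylor polynomials of degrees $\ell_r$, whose expansion yields only monomials $\chi_D(f)$ with $\deg f\ll\sum_r\ell_r y_r<n/2$; the complementary event contributes $o(1)$ to the mean by a high even-moment Markov estimate. On the typical event, the function-field quadratic orthogonality
\[
\frac{1}{|\mathcal{H}_n|}\sum_{D\in\mathcal{H}_n}\chi_D(f)\;=\;\mathbbm{1}_{f=\square}\,+\,O\bigl(q^{-(n-\deg f)/2}\bigr)
\]
(as used in \cite{FL3}) collapses the averaged upper bound to a Gaussian exponential of prime sums whose exponent is the mean of the even-$l$ part of $V_D$ plus one-half the variance of the $l=1$ part.

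For the mean only $l=2$ survives character averaging, giving $\sum_j k_j\sum_P v_j^{2\deg P}q^{-2\alpha_j\deg P}/|P|$; the prime polynomial theorem combined with Abel summation in the form
\[
\sum_{\deg P\le Y}\frac{e^{i\eta\deg P}}{|P|}\;=\;\log\min\!\bigl(\tfrac{1}{|\eta|},Y\bigr)+O(1),
\]
applied with $\eta=2\theta_j$, reproduces $\mu(\bm v^{(m)},g)$ of \eqref{mu}. For the variance, $\sum_P\Re(A_{P,1})^2/|P|$ splits via the product-to-sum identity into cosine sums of frequency $\eta\in\{\theta_i\pm\theta_j\}$: the diagonal $i=j$ yields $2\sum_j k_j^2\bigl(\log g+\log\min(1/(2|\theta_j|),g)\bigr)$ (since $\theta_i-\theta_i=0$ and $\theta_i+\theta_i=2\theta_i$), while $i\ne j$ yields $4\sum_{i<j}k_ik_j\bigl(\log\min(1/|\theta_i-\theta_j|,g)+\log\min(1/|\theta_i+\theta_j|,g)\bigr)$, and together these reconstruct $\sigma(\bm v^{(m)},g)$ in \eqref{sigma}. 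Choosing $X$ of order $g$ absorbs $e^{O(n/X)}$ and all Harper-partitioning losses into the advertised factor $n^{\varepsilon}$.

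The main obstacle is the joint calibration of the partitioning data $(y_r,\ell_r)$ with the multi-shift structure. The Taylor truncations must be long enough that oscillatory frequencies as small as $1/g$ (coming from the worst-case $\theta_i\pm\theta_j$ permitted by the hypothesis on $\bm v^{(m)}$) are resolved, so that the contribution $\log\min(1/|\theta_i\pm\theta_j|,g)$ is captured rather than cut off by the partition; yet short enough that every expanded monomial has degree $<n/2$, ensuring the orthogonality is essentially lossless. The exceptional large-deviation set must also be controlled uniformly across the $\binom{m}{2}$ cross-frequencies at once, via an $L^{2\ell_r}$ estimate applied to the entire $m$-tuple of shifted prime Dirichlet polynomials in a single stroke; this uniform moment control, absent from the one-shift case of \cite{FL3}, is the essential new ingredient needed to pass from Theorem~\ref{florea} to Theorem~\ref{Th2}.
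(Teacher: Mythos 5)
Your proposal is sound in outline but takes a genuinely different route from the paper. The paper proves Theorem \ref{Th2} by Soundararajan's frequency-of-large-values method, following Florea's proof of Theorem \ref{florea}: starting from the same pointwise bound (Lemma \ref{Le8}), it introduces the counting function $\Upsilon_{n}(\bm{v}^{(m)},V)$ of discriminants with $\sum_j 2k_j\log|\mathcal{L}|\ge \mu(\bm{v}^{(m)},g)+V$, writes the moment as $\int \Upsilon_{n}(\bm{v}^{(m)},V)\,e^{\mu+V}\,dV$, extracts $\mu$ from the prime-square ($P^2$) terms, splits the remaining prime sum into a long piece $S_1$ and a short tail $S_2$, and bounds each exceedance probability by Markov's inequality applied to the even moments of Lemma \ref{Le10}, with the parameter $A$ tuned over three ranges of $V$; the factor $n^{\varepsilon}$ is intrinsic to this route. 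You instead propose Harper's method: exponentiate the pointwise bound for the whole product at once, partition the primes into scales, majorise the exponential by truncated Taylor polynomials on the good event, and average directly via the orthogonality of Lemmas \ref{Le2}--\ref{Le3}; your identification of the mean (the $l=2$ terms giving $\mu$ of \eqref{mu}) and of half the variance (diagonal frequencies $2\theta_j$ and cross frequencies $\theta_i\pm\theta_j$ reassembling $\tfrac12\sigma$ of \eqref{sigma}) matches exactly the computation the paper performs through Lemma \ref{Le9}, so the two methods target the same Gaussian exponent. What each buys: the paper's argument is shorter and needs only the two-piece split $S_1,S_2$, at the cost of the $n^{\varepsilon}$; your route, executed in full, would be sharper (removing the $\varepsilon$ as Harper does for $\zeta$), at the cost of the scale bookkeeping you describe. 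One caution on your treatment of the exceptional set: it cannot be dispatched by ``measure times Markov'' alone, since the integrand can be as large as $e^{cg}$ while the measure savings available from Lemma \ref{Le10} (which requires $2\ell y\le n$, hence $\ell\ll g/y$) are only of size $\exp(-c\,\ell\log \ell)$ at the relevant scales; you need Harper's nesting, i.e.\ on the event that scale $r$ is the first bad one, re-apply the pointwise inequality truncated at the last good scale and absorb the resulting $O(g/y_{r-1})$ in the exponent there --- this is precisely the calibration issue your final paragraph gestures at, and it must be made explicit for the argument to close.
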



 
\noindent
Theorem \ref{Th2} is also true for any fixed sequence $\bf{k}^{(m)}$ of positive real numbers.
 Let us define $\mathcal{L}^{(l)}(u,\chi_{D})$ as the $l$-th derivative of $\mathcal{L}(u,\chi_{D})$.  As an important  consequence of Theorem \ref{Th2}, we have the following upper bound.
 \begin{theorem}\label{th5}
 	Let $l\in \mathbb{N}$ and $\varepsilon >0$. For $n$ large, we have
 	\begin{equation*}
 	\sum_{D\in \mathcal{H}_{n}}\big|\mathcal{L}^{(l)}\Big(q^{-1/2},\chi_{D}\Big) \big|^{k}\ll_{k, l, \ve} |\mathcal{H}_{n}| g^{\frac{1}{2}k(k+1)+lk+\varepsilon}.
 	\end{equation*} 	
 
 \end{theorem}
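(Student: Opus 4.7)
The plan is to deduce Theorem \ref{th5} from Theorem \ref{Th2} applied with $m=1$ and $k_1 = k/2$, by expressing $\mathcal{L}^{(l)}(q^{-1/2},\chi_D)$ as a Cauchy contour integral and using the functional equation to handle the portion of the contour lying outside the critical circle $|u|=q^{-1/2}$.

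Since $\mathcal{L}(u,\chi_D)$ is a polynomial in $u$ for every $D\in\mathcal{H}_n$, it is entire and Cauchy's formula gives, for any $r>0$,
\[
\mathcal{L}^{(l)}(q^{-1/2},\chi_D) \;=\; \frac{l!}{2\pi i}\oint_{|u-q^{-1/2}|=r}\frac{\mathcal{L}(u,\chi_D)}{(u-q^{-1/2})^{l+1}}\,du.
\]
I would take $r=1/(g\sqrt{q})$, so that the contour sits at the natural scale $1/g$ relative to $q^{-1/2}$. Parameterising $u=u(\phi)=q^{-1/2}+re^{i\phi}$, bounding the integrand trivially, and using H\"older's inequality to pass the $k$-th power through the $\phi$-integral yields
\[
\bigl|\mathcal{L}^{(l)}(q^{-1/2},\chi_D)\bigr|^{k} \;\ll_{l,k}\; r^{-lk}\int_{0}^{2\pi}\bigl|\mathcal{L}(u(\phi),\chi_D)\bigr|^{k}\,d\phi.
\]
Summing over $D\in\mathcal{H}_n$ and interchanging sum and integral reduces the problem to bounding $\sum_{D}|\mathcal{L}(u(\phi),\chi_D)|^{k}$ uniformly in $\phi$.

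Next, I would rewrite $u(\phi)$ in the form $v/q^{1/2+\alpha}$ required by Theorem \ref{Th2}. With $\delta:=r\sqrt{q}=1/g$, one computes $|u(\phi)|^{2}=q^{-1}(1+2\delta\cos\phi+\delta^{2})$, so $|u(\phi)|=q^{-1/2-\alpha(\phi)}$ with $\alpha(\phi)=O(1/g)$ and $\arg u(\phi)=\theta(\phi)=O(1/g)$. For $\cos\phi>-\delta/2$ (most of the contour) $\alpha(\phi)$ is \emph{negative}, so $u(\phi)$ lies outside the region $\{|u|\le q^{-1/2}\}$ to which Theorem \ref{Th2} directly applies. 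On this portion I would invoke the functional equation $\mathcal{L}(u,\chi_D)=\varepsilon_D(qu^{2})^{g}\mathcal{L}(1/(qu),\chi_D)$, noting that $|qu(\phi)^{2}|^{g}=(1+O(1/g))^{g}=O(1)$, while $u^{\sharp}(\phi):=1/(qu(\phi))$ has modulus $q^{-1/2-\alpha^{\sharp}(\phi)}$ with $\alpha^{\sharp}(\phi)\in[0,1/2)$, $\alpha^{\sharp}(\phi)=O(1/g)$, and argument $-\theta(\phi)=O(1/g)$. After, if necessary, a complex conjugation (so that the phase lies in $(0,\pi]$), this produces, up to a bounded multiplicative constant, a point $v^{\sharp}/q^{1/2+\alpha^{\sharp}}$ satisfying the hypotheses of Theorem \ref{Th2}.

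Applying Theorem \ref{Th2} with $m=1$, $k_1=k/2$, and observing that $|\theta^{\sharp}(\phi)|=O(1/g)$ forces $\min\{1/(2|\theta^{\sharp}|),g\}\asymp g$, the exponent simplifies to
\[
\mu\bigl(v^{\sharp},g\bigr)+\tfrac{1}{2}\sigma\bigl(v^{\sharp},g\bigr) \;=\; \tfrac{k}{2}\log g+\tfrac{k^{2}}{2}\log g \;=\; \tfrac{k(k+1)}{2}\log g,
\]
so that $\sum_{D}|\mathcal{L}(u(\phi),\chi_D)|^{k}\ll_{k,\varepsilon}|\mathcal{H}_n|\,g^{k(k+1)/2+\varepsilon}$ uniformly in $\phi$. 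Multiplying by $r^{-lk}\asymp g^{lk}$ and the $O(1)$ length of the $\phi$-integral yields the claimed bound $|\mathcal{H}_n|\,g^{k(k+1)/2+lk+\varepsilon}$ (using $n\asymp g$ to absorb any $g^{\varepsilon}$ into $n^{\varepsilon}$). The main obstacle I anticipate is the functional-equation bookkeeping: verifying that $u\mapsto 1/(qu)$ sends the ``outside'' portion of the contour back into the domain of applicability of Theorem \ref{Th2} with parameters still of size $O(1/g)$, and confirming that the prefactor $(qu^{2})^{g}$ contributes only an $O(1)$ factor. Once this is in place, the remainder of the argument amounts to reading off the values of $\mu$ and $\sigma$ at a one-parameter family of inputs.
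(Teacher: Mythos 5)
Your proposal follows the same route as the paper: Cauchy's integral formula on a circle of radius comparable to $1/g$, H\"older's inequality to separate the derivative factor, and then an appeal to Theorem~\ref{Th2} with $m=1$, $k_1=k/2$. The paper parametrizes by $\theta = s - \tfrac12$ on $|\theta|=1/g$ and writes $\theta = \alpha - it/\log q$ with $\alpha = O(1/g)$, then cites Theorem~\ref{Th2} directly. Where you go further is in noticing that on roughly half of any such contour $\Re\theta < 0$, i.e.\ $\alpha<0$, so the point $u$ lies outside $\{|u|\le q^{-1/2}\}$ and Theorem~\ref{Th2} (whose hypotheses require $\alpha_j\in[0,\tfrac12)$, via Lemma~\ref{Le8}) does not apply verbatim; you repair this by reflecting through the functional equation $\mathcal{L}^*(u,\chi_D)=(qu^2)^g\mathcal{L}^*(1/(qu),\chi_D)$, noting $|qu^2|^g = (1+O(1/g))^g=O(1)$ and that $1/(qu)$ has $\alpha^\sharp\geq0$, $\alpha^\sharp=O(1/g)$, and phase $O(1/g)$. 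This bookkeeping (including the harmless $(1-u)^\lambda/(1-1/(qu))^\lambda$ factor when $n$ is even) is correct and fills in a step the paper leaves implicit. The rest — that $\min\{1/(2|\theta|),g\}\asymp g$ for $|\theta|=O(1/g)$, so $\mu+\tfrac12\sigma = \tfrac12k(k+1)\log g$, and the $r^{-lk}\asymp g^{lk}$ prefactor — matches the paper, and the final bound is the same.
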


\subsubsection{Applications}
From the Theorem \ref{Th2} and in light of \eqref{delta} if we specialize $n$
as $n = 2g + 1 $ and $\alpha_j$'s are zero then we recover Theorem \ref{florea}. 

\noindent
The case of the mean value for
$\mathcal{L}(q^{-1/2}, \chi_D)$ taken over $\mathcal{H}_{2g+2}$ was investigated by Jung \cite{JUNG}. Taking $n=2g+2$, we have the following corollary which generalizes the Theorem \ref{florea} :
\begin{corollary}
Let $\varepsilon >0$. For $n$ large, we have
	\begin{align*}
	\frac{1}{|\mathcal{H}_{2g+2}|}\sum_{D\in \mathcal{H}_{2g+2}}|\mathcal{L}(q^{-1/2},\chi_{D})|^{k}\ll_{k, \ve} g^{\frac{1}{2}k(k+1)+\ve}.
	\end{align*}
\end{corollary}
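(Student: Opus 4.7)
The plan is to deduce this corollary as a direct specialization of Theorem \ref{Th2} to $m = 1$, $n = 2g+2$, together with the remark that Theorem \ref{Th2} remains valid for any fixed sequence of positive real numbers $\bm{k}^{(m)}$. Specifically, I would take $k_1 = k/2$, $\alpha_1 = 0$, and $v_1 = 1$ (so $\theta_1 = 0$), so that the left-hand side of Theorem \ref{Th2} becomes exactly $|\mathcal{H}_{2g+2}|^{-1} \sum_{D \in \mathcal{H}_{2g+2}} |\mathcal{L}(q^{-1/2}, \chi_D)|^k$, which is the object we want to bound.

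The next step is to evaluate $\mu(\bm{v}^{(1)}, g)$ and $\sigma(\bm{v}^{(1)}, g)$ at these parameters. Because $m = 1$ the cross-term sum in \eqref{sigma} is empty, and because $\theta_1 = 0$ we have $\min\{1/(2|\theta_1|), g\} = g$, yielding
\begin{align*}
\mu(\bm{v}^{(1)}, g) &= \tfrac{k}{2} \log g, \\
\sigma(\bm{v}^{(1)}, g) &= 2\left(\tfrac{k}{2}\right)^2 \log g + 2\left(\tfrac{k}{2}\right)^2 \log g = k^2 \log g.
\end{align*}
Hence $\mu + \tfrac{1}{2}\sigma = \tfrac{k(k+1)}{2} \log g$, so $\exp\!\left(\mu + \tfrac{1}{2}\sigma\right) = g^{k(k+1)/2}$. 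Since $n = 2g+2 \asymp g$, the factor $n^{\varepsilon}$ coming from Theorem \ref{Th2} absorbs into $g^{\varepsilon}$, and the bound $g^{k(k+1)/2 + \varepsilon}$ of the corollary follows.

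The only mild subtlety is that the statement of Theorem \ref{Th2} requires $\theta_j \in (0, \pi]$, whereas we need $\theta_1 = 0$. This is resolved by a continuity argument: for each fixed $D$ the function $u \mapsto \mathcal{L}(u, \chi_D)$ is a polynomial, so $\theta_1 \mapsto |\mathcal{L}(q^{-1/2} e^{i\theta_1}, \chi_D)|^k$ is continuous at $\theta_1 = 0$, and therefore so is the left-hand side, being a finite sum over $\mathcal{H}_{2g+2}$. On the other hand, the right-hand side of Theorem \ref{Th2} is constant in $\theta_1$ throughout the window $0 < \theta_1 < 1/(2g)$, so letting $\theta_1 \to 0^{+}$ preserves the inequality and delivers the corollary. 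No genuine obstacle arises beyond this bookkeeping: the content of the corollary is already contained in Theorem \ref{Th2}.
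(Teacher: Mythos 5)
Your proposal is correct and follows essentially the same route as the paper, which obtains this corollary as a direct specialization of Theorem \ref{Th2} with $m=1$, $n=2g+2$, zero shift, and exponent $k=2k_1$ (using the remark that the theorem holds for real $k_1$), giving $\exp\left(\mu+\tfrac12\sigma\right)=g^{k(k+1)/2}$. Your extra care about the boundary case $\theta_1=0$ via the limit $\theta_1\to 0^{+}$ (where the right-hand side is constant for $0<\theta_1<1/(2g)$) is sound bookkeeping that the paper leaves implicit.
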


Similarly, Theorem \ref{th5} provide upper bound for $k$-th moment of $\mathcal{L}^{(m)}(q^{-1/2},\chi_{D})$ with $D\in  \mathcal{H}_{2g+1}$ and $D\in  \mathcal{H}_{2g+2}$.
More precisely,
\begin{corollary}
	Let $l\in \mathbb{N}$ and $\varepsilon >0$. For $n$ large, we have
 	\begin{equation*}
 	\sum_{D\in \mathcal{H}_{2g+1}}\big|\mathcal{L}^{(l)}\Big(q^{-1/2},\chi_{D}\Big) \big|^{k}\ll_{k, l, \varepsilon} q^{2g+1} g^{\frac{1}{2}k(k+1)+lk+\varepsilon},
 	\end{equation*} 	
 	\begin{equation*}
 	\sum_{D\in \mathcal{H}_{2g+2}}\big|\mathcal{L}^{(l)}\Big(q^{-1/2},\chi_{D}\Big) \big|^{k}\ll_{k, l, \varepsilon} q^{2g+2} g^{\frac{1}{2}k(k+1)+lk+\varepsilon}.
 	\end{equation*} 
\end{corollary}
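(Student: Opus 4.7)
The corollary is a direct specialization of Theorem \ref{th5}, and the only nontrivial ingredient one needs beyond Theorem \ref{th5} itself is an estimate for the cardinality of the hyperelliptic ensemble $|\mathcal{H}_{n}|$. The plan is therefore very short.

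First I would recall the standard formula $|\mathcal{H}_{n}| = q^{n}\bigl(1 - q^{-1}\bigr)$ for $n \geq 2$, which follows from a routine application of M\"obius inversion on the generating function for monic polynomials in $\mathbb{F}_{q}[t]$ (equivalently, from the zeta function of $\mathbb{A}^{1}_{\mathbb{F}_{q}}$). In particular one has the uniform bound $|\mathcal{H}_{n}| \ll q^{n}$, which is all that is needed for the corollary. This fact is discussed in the preliminaries (Section \ref{prilimimaries}), so I would simply cite it.

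Next, I would specialize Theorem \ref{th5} to $n = 2g+1$. By the genus relation \eqref{genus}, when $n$ is odd we have $\lambda = 0$ and thus $2g = n-1$, so that $g$ in the corollary agrees with the $g$ appearing on the right-hand side of Theorem \ref{th5}. Substituting $|\mathcal{H}_{2g+1}| \ll q^{2g+1}$ into the bound of Theorem \ref{th5} immediately yields
\begin{equation*}
\sum_{D\in \mathcal{H}_{2g+1}}\bigl|\mathcal{L}^{(l)}(q^{-1/2},\chi_{D})\bigr|^{k} \ll_{k,l,\varepsilon}\, q^{2g+1}\, g^{\frac{1}{2}k(k+1)+lk+\varepsilon},
\end{equation*}
which is the first inequality of the corollary.

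For the second inequality, I would apply exactly the same argument with $n = 2g+2$. Now $n$ is even, so $\lambda = 1$ and again $2g = n-1-\lambda$, consistent with the chosen $g$. Using $|\mathcal{H}_{2g+2}| \ll q^{2g+2}$ in Theorem \ref{th5} produces the claimed bound with $q^{2g+2}$ in place of $q^{2g+1}$. Since both steps are pure substitution, there is no real obstacle: all the analytic work is already done in Theorem \ref{th5} (and Theorem \ref{Th2} behind it), and the corollary is an immediate consequence together with the counting $|\mathcal{H}_{n}| \asymp q^{n}$.
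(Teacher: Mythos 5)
Your proposal is correct and matches the paper's treatment: the corollary is stated there as an immediate specialization of Theorem \ref{th5} to $n=2g+1$ and $n=2g+2$, using the count $|\mathcal{H}_n|=q^{n-1}(q-1)\ll q^n$ and the genus relation \eqref{genus} exactly as you do. No gaps.
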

Theorem \ref{Th1.2} and \ref{Th2}  can be related to the main results of \cite{CHA} and \cite{MU}.  In fact we can say that $\mathcal{L}\Big(\frac{v_{1}}{q^{\frac{1}{2}+\alpha_{1}}},\chi_{D}\Big)$ and $\mathcal{L}\Big(\frac{v_{2}}{q^{\frac{1}{2}+\alpha_{2}}},\chi_{D}\Big)$ are essentially correlated when $|\theta_j|\asymp \frac{1}{g}$ for $j=1,2$ and independent when one of  $\theta_j$'s  is much larger than $\frac{1}{g}$. More precisely, we have the following corollaries.

 
 \begin{corollary}\label{Cor1} Let $W$ and $W^c$ be defined by \eqref{notation of W}. For every $\varepsilon>0$ and $n$ large, 
 \begin{align*}
 	&\sum_{D\in \mathcal{H}_{n}} \bigg|\mathcal{L}\Big(\frac{v_{1}}{q^{\frac{1}{2}+\alpha_{1}}},\chi_{D}\Big) \mathcal{L}\Big(\frac{v_{2}}{q^{\frac{1}{2}+\alpha_{2}}},\chi_{D}\Big) \bigg|^{2k}\\
 &	\ll_{k, \varepsilon} \left\{
 	\begin{array}
 	[c]{ll}
 	|\mathcal{H}_{n}| \; \, {g}^{2k(4k+1)+\ve} & \text{if \;  } |W|=2,
 	\vspace{1mm}
 	\\
 	\frac{|\mathcal{H}_{n}|\,\, {g}^{3k^2+k+\ve}}{|\theta_2|^{k^2+k}|\theta_1- \theta_2|^{2k^2}|\theta_1 + \theta_2|^{2k^2}} & \text{if \; } W=\{1\}, W^c=\{2\},
 	\vspace{2mm}
 	\\
 	  \frac{|\mathcal{H}_{n}|\,\, {g}^{2k^2+\ve}}{|\theta_1\theta_2|^{k^2+k}|\theta_1- \theta_2|^{2k^2}|\theta_1 + \theta_2|^{2k^2}}     & \text{if \,} |W^c|=2,\, \displaystyle\lim_{g\to \infty}g|\theta_1-\theta_2|=\infty, \vspace{2mm}
 	  \\
 	  \frac{|\mathcal{H}_{n}|\,\, {g}^{4k^2+\ve}}{|\theta_1\theta_2|^{k^2+k}|\theta_1 + \theta_2|^{2k^2}} & \text{if \,}  |W^c|=2,\, \displaystyle\lim_{g\to \infty}g|\theta_1-\theta_2|<\infty,
 	\end{array}
 	\right.
 	\end{align*}
 \end{corollary}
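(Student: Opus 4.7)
The plan is to deduce the corollary directly from Theorem~\ref{Th2} by specializing to $m=2$, $k_{1}=k_{2}=k$, and then unpacking the exponential $\exp(\mu+\tfrac12\sigma)$ in each of the four geometric regimes specified in the statement. Since $n=2g+1+\lambda$ with $\lambda\in\{0,1\}$, we have $n\asymp g$, so the $n^{\varepsilon}$ factor appearing in Theorem~\ref{Th2} will be absorbed into $g^{\varepsilon}$.

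First I would compute, from the definitions \eqref{mu} and \eqref{sigma},
$$\exp\!\left(\mu(\bm{v}^{(2)},g)+\tfrac12\sigma(\bm{v}^{(2)},g)\right)=g^{2k^{2}}\,\Theta_{1}^{k^{2}+k}\,\Theta_{2}^{k^{2}+k}\,\Theta_{-}^{2k^{2}}\,\Theta_{+}^{2k^{2}},$$
where I write $\Theta_{j}:=\min\{1/(2|\theta_{j}|),g\}$ and $\Theta_{\pm}:=\min\{1/|\theta_{1}\pm\theta_{2}|,g\}$. The whole corollary reduces to evaluating these four min-factors in each case. The essential preliminary observation is that whenever $g|\theta_{j}|\to\infty$ for some $j$, the positivity $\theta_{1},\theta_{2}\in(0,\pi]$ forces $g|\theta_{1}+\theta_{2}|\geq g\,\theta_{j}\to\infty$, so $\Theta_{+}=1/|\theta_{1}+\theta_{2}|$ automatically whenever $W^{c}$ is nonempty; this handles $\Theta_{+}$ uniformly across Cases~2--4.

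The remaining case analysis is then routine. In Case~1 ($|W|=2$), all four quantities $g|\theta_{j}|$ and $g|\theta_{1}\pm\theta_{2}|$ are bounded, so each $\Theta$-factor is $\asymp g$ with implicit constant controlled by $c_{\bm{v}^{(2)}}$, producing the exponent $2k^{2}+2(k^{2}+k)+4k^{2}=2k(4k+1)$. In Case~2 only $\Theta_{1}\asymp g$, with $\Theta_{2}$ and $\Theta_{\pm}$ equal to the reciprocals; in Case~3 none of the $\Theta$-factors are $\asymp g$ (the hypothesis $\lim g|\theta_{1}-\theta_{2}|=\infty$ disposes of $\Theta_{-}$, and the preliminary observation disposes of $\Theta_{+}$); and in Case~4 only $\Theta_{-}\asymp g$, with $\Theta_{+}$ still being a reciprocal by the preliminary observation. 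In each subcase one multiplies by $|\mathcal{H}_{n}|$ and the absorbed $g^{\varepsilon}$ and reads off the stated bound. The only point worth flagging is bookkeeping: whenever a min evaluates to $g$, the ratio between $g$ and the competing reciprocal is bounded by a function of $c_{\bm{v}^{(2)}}$ and is absorbed into the $\ll_{k,\varepsilon}$ implicit constant. There is no deeper obstacle, since all of the analytic content already resides in Theorem~\ref{Th2}.
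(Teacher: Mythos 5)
Your proposal is correct and is exactly the right deduction: the paper states Corollary~\ref{Cor1} without a written proof precisely because it is a direct specialization of Theorem~\ref{Th2} to $m=2$, $k_1=k_2=k$, and the case analysis you carry out — computing $\exp(\mu+\tfrac12\sigma)=g^{2k^2}\Theta_1^{k^2+k}\Theta_2^{k^2+k}\Theta_-^{2k^2}\Theta_+^{2k^2}$, observing that $\theta_j>0$ forces $\Theta_+$ to be a reciprocal whenever $W^c\neq\emptyset$, and then resolving each $\min$ in the four regimes — is the intended argument. The only bookkeeping point worth emphasizing is that for the upper bound one never needs the two-sided $\asymp$ in the $\Theta\asymp g$ cases: $\Theta\leq g$ follows trivially from the $\min$, so the dependence on $c_{\bm v^{(2)}}$ is not even needed here (in contrast to the lower-bound Corollary~\ref{Cor2}, where it is).
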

  \begin{corollary}\label{Cor2} 
  	With the assumption as in Corollary \ref{Cor1}, for $n$ large, we have
 \begin{align*}
 	&\sum_{D\in \mathcal{H}_{n}} \bigg|\mathcal{L}\Big(\frac{v_{1}}{q^{\frac{1}{2}+\alpha_{1}}},\chi_{D}\Big) \mathcal{L}\Big(\frac{v_{2}}{q^{\frac{1}{2}+\alpha_{2}}},\chi_{D}\Big) \bigg|^{2k}\\ 
 &\gg_{k, \bm{u}^{(2)}} \left\{
 	\begin{array}
 	[c]{ll}
 	|\mathcal{H}_{n}| \; \, {g}^{2k(4k+1)+\ve} & \text{if \;  } |W|=2,
 	\vspace{1mm}
 	\\
 	\frac{|\mathcal{H}_{n}|\,\, {g}^{3k^2+k+\ve}}{|\theta_2|^{k^2+k}|\theta_1- \theta_2|^{2k^2}|\theta_1 + \theta_2|^{2k^2}} & \text{if \; } W=\{1\}, W^c=\{2\}, ,
 	\vspace{2mm}
 	\\
 	  \frac{|\mathcal{H}_{n}|\,\, {g}^{2k^2+\ve}}{|\theta_1\theta_2|^{k^2+k}|\theta_1- \theta_2|^{2k^2}|\theta_1 + \theta_2|^{2k^2}}     & \text{if \,} |W^c|=2,\, \displaystyle\lim_{g\to \infty}g|\theta_1-\theta_2|=\infty, \vspace{2mm}
 	  \\
 	  \frac{|\mathcal{H}_{n}|\,\, {g}^{4k^2+\ve}}{|\theta_1\theta_2|^{k^2+k}|\theta_1 + \theta_2|^{2k^2}} & \text{if \,}  |W^c|=2,\, \displaystyle\lim_{g\to \infty}g|\theta_1-\theta_2|<\infty.
 	\end{array}
 	\right.
 	\end{align*}
 \end{corollary}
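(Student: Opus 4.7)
The plan is to obtain Corollary~\ref{Cor2} as a direct specialization of Theorem~\ref{Th1.2} to $m=2$ with $k_1=k_2=k$, followed by an explicit evaluation of $\mu(\bm v^{(2)},g)$ and $\tfrac12\sigma(\bm v^{(2)},g)$ in each of the four regimes described in the statement. With this choice of parameters, formulas \eqref{mu} and \eqref{sigma} simplify to
\begin{align*}
\mu(\bm v^{(2)},g) &= k\sum_{j=1}^{2}\log\min\Big\{\tfrac{1}{2|\theta_j|},g\Big\}, \\
\tfrac12\sigma(\bm v^{(2)},g) &= 2k^2\log g + k^2\sum_{j=1}^{2}\log\min\Big\{\tfrac{1}{2|\theta_j|},g\Big\} + 2k^2\log\min\Big\{\tfrac{1}{|\theta_1-\theta_2|},g\Big\} + 2k^2\log\min\Big\{\tfrac{1}{|\theta_1+\theta_2|},g\Big\},
\end{align*}
so once each minimum is resolved, the claimed bound follows by collecting powers of $g$, $|\theta_j|$ and $|\theta_1\pm\theta_2|$.

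The driving observation is the elementary dichotomy $\min\{1/(2|\theta_j|),g\}\asymp g$ when $j\in W$ and $\asymp 1/|\theta_j|$ when $j\in W^{c}$, where the implicit constants depend only on $c_{\bm v^{(2)}}$; the same dichotomy governs the cross-terms $\min\{1/|\theta_1\pm\theta_2|,g\}$ depending on whether $g|\theta_1\pm\theta_2|$ remains bounded as $g\to\infty$. Since $\theta_1,\theta_2\in(0,\pi]$ are positive, one has $|\theta_1+\theta_2|\ge\max(|\theta_1|,|\theta_2|)$, so $|\theta_1+\theta_2|\ll 1/g$ is possible only if $|W|=2$; this is what allows the third and fourth cases of the statement to distinguish only the behaviour of $|\theta_1-\theta_2|$.

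Substituting these resolutions into $\exp(\mu+\tfrac12\sigma)$ and tallying exponents then yields each of the four bounds. In the regime $|W|=2$ all four minima are $\asymp g$, giving $\exp(\mu)\asymp g^{2k}$ and $\exp(\tfrac12\sigma)\asymp g^{2k^2+2k^2+4k^2}=g^{8k^2}$, hence $g^{2k(4k+1)}$. In the regime $W=\{1\}$, $W^c=\{2\}$, only the $j=1$ minimum contributes $g$, while the remaining three contribute $|\theta_2|^{-1}$, $|\theta_1-\theta_2|^{-1}$, $|\theta_1+\theta_2|^{-1}$ with multiplicities $k^2+k$, $2k^2$, $2k^2$ respectively, producing $g^{3k^2+k}|\theta_2|^{-(k^2+k)}|\theta_1-\theta_2|^{-2k^2}|\theta_1+\theta_2|^{-2k^2}$. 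The remaining two cases are handled identically, using additionally $\min\{1/|\theta_1-\theta_2|,g\}\asymp g$ in the fourth case.

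Since the argument reduces to a case analysis of the minima in Theorem~\ref{Th1.2}, no genuine analytic obstacle arises; the only point requiring care is that the multiplicative constants produced by the $\asymp$-comparisons must be absorbed into the implicit constant, which is allowed to depend on $c_{\bm v^{(2)}}$ exactly as permitted by the statement of the corollary.
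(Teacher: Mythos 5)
Your proposal is correct and follows exactly the route the paper intends: Corollary~\ref{Cor2} is stated as a direct specialization of Theorem~\ref{Th1.2} to $m=2$, $k_1=k_2=k$, with the minima in $\mu$ and $\sigma$ resolved according to the membership of each index in $W$ or $W^c$ and the behaviour of $g|\theta_1\pm\theta_2|$. Your bookkeeping of the exponents is right, your observation that $\theta_1,\theta_2>0$ forces $|\theta_1+\theta_2|\geq\max(|\theta_1|,|\theta_2|)$ (so the fourth regime can only vary the $\theta_1-\theta_2$ minimum) is the key point, and absorbing the comparison constants into the implied constant depending on $c_{\bm v^{(2)}}$ is legitimate.

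One small remark worth flagging: as printed, Corollary~\ref{Cor2} carries a $g^{\ve}$ factor in each lower bound, which is \emph{stronger} than what Theorem~\ref{Th1.2} (which has no $\ve$) can deliver. This is evidently a typographical slip carried over from the statement of Corollary~\ref{Cor1} (the upper bound, where $n^{\ve}$ does appear in Theorem~\ref{Th2}); what your argument correctly establishes, and what the corollary should read, is the lower bound without the $+\ve$.
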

 \begin{remark}
  Corollary \ref{Cor1} and \ref{Cor2} gives the lower and upper bound over all monic square-free polynomials of both even and odd degree near the critical line.
 \end{remark}
 \section{Background for $L$-functions over function fields}\label{prilimimaries}
We begin this section with some preliminaries of $L$-functions over function fields. We will use \cite{ROS} as a general reference.
\subsection{Basic facts on $\mathbb{F}_q[t]$} We start by fixing a finite field $\mathbb{F}_{q}$ of odd cardinality $q=p^r$, $r\geq 1$ with a prime $p$. We denote by $\mathbb{A}=\mathbb{F}_{q}[t]$ the polynomial ring over $\mathbb{F}_{q}$.
 For a polynomial $f$ in $\mathbb{F}_{q}[t]$, it's degree will be denoted by either $\deg(f)$ or $d(f)$. 
 
 \vspace{1mm}
\noindent 
 The set of all monic polynomials and monic irreducible polynomials of degree $n$ are denoted by $\mathcal{M}_{n,q}$ (or simply $\mathcal{M}_{n}$ as we fix $q$) and $\mathcal{P}_{n, q}$ (or simply $\mathcal{P}_{n}$) respectively. Let $\mathcal{M}=\cup_{n\geq 1} \mathcal{M}_{n}$ and $\mathcal{P}=\cup_{n\geq 1} \mathcal{P}_{n}$. we also denote the set of all monic polynomials and monic irreducible polynomials of degree less or equal to $n$ by $\mathcal{M}_{\leq n,q}$ (or simply $\mathcal{M}_{\leq n}$) and $\mathcal{P}_{\leq n,q}$ (or simply $\mathcal{P}_{\leq n}$) respectively. 
Let $\mathcal{H}_{n}$ denotes the set of monic square-free polynomials of degree $n$. Observe that for $n\geq 1$, $|\mathcal{M}_{n}|=q^{n}$ and 
$$|\mathcal{H}_{n}|=\left\{\begin{array}
[c]{ll}q,&\; \text{if}\; n=1,\\
q^{n-1}(q-1),&\; \text{if}\; n\ge 2.
\end{array}
\right.$$ If $f$ is is a non-zero polynomial $\mathbb{F}_{q}[t]$, we define the norm of $f$ to be $|f|=q^{d(f)}$. If $f=0$, we set $|f|=0$.
The prime polynomial theorem (see \cite{ROS}, Theorem $2.2$) states that 
\begin{align}\label{prime poly th}
|\mathcal{P}_{n,q}|=\frac{q^{n}}{n} \,\,+\,\,O\Big(\frac{q^{\frac{n}{2}}}{n}\Big).
\end{align}
\noindent
 The zeta function of $\mathbb{A}$, denoted by $\zeta_{\mathbb{A}}(s)$ and is defined by $$\zeta_{\mathbb{A}}(s):=\displaystyle\sum_{f\in \mathcal{M}}\frac{1}{|f|^{s}}=\displaystyle\prod_{P\in \mathcal{P}} \left( 1- |P|^{-s} \right)^{-1},\quad\quad  \Re(s)>1.$$ One can easily prove that $\zeta_{\mathbb{A}}(s)=\frac{1}{1-q^{1-s}}$, and this provides an analytic continuation of zeta function to the complex plane with a simple pole at $s=1$. Using the change of variable $u=q^{-s}$, 
 \begin{align*}
 \mathcal{Z}(u)=\sum_{f\in \mathcal{M}}u^{d(f)}=\frac{1}{1-qu}, \quad \text{ if } |u|<\frac{1}{q}.
 \end{align*} 
 \noindent
\subsection{Quadratic Dirichlet character and properties of their $L$-functions} For a monic irreducible polynomial $P$, the quadratic residue symbol $\left(\frac{f}{P} \right)$ is defined by 
 \begin{align*}
  \left( \frac{f}{P}\right)= 
 \left\{
 \begin{array}
 [c]{ll}
  1, & \text{if\, $f$ is a square\;} (\text{mod}\,\,  P),\,\, P\nmid f \\
 -1, & \text{if\, $f$ is not a square\;} (\text{mod}\,\, P),\,\, P\nmid f\\
  0, & \text{if \; } P\mid f.
  \end{array}
 \right.
 \end{align*}
 For monic square-free polynomial $D\in \mathbb{F}_{q}[t]$, the symbol $\left( \frac{D}{.}\right)$ is defined by extending the above residue symbol multiplicatively. We denote the quadratic Dirichlet character $\chi_{D}$ by $$\chi_{D}(f)= \left(\frac{D}{f} \right).$$
 \noindent
 The $L$-function associated to the quadratic Dirichlet character $\chi_{D}$ is defined by
 \begin{align*}
 L(s,\chi_{D})=\sum_{f\in \mathcal{M}} \frac{\chi_{D}(f)}{|f|^{s}}=\prod_{P\in \mathcal{P}}\left(1-\chi_{D}(P)\,|P|^{-s} \right)^{-1},\;\; \Re(s)>1.
 \end{align*} 
Using the change of variable $u=q^{-s}$, we have  
 	\begin{align*}
 	\mathcal{L}(u,\chi_{D})=\sum_{f\in \mathcal{M} } \chi_{D}(f)\, u^{d(f)}=\prod_{P\in\mathcal{P}}\left(1-\chi_{D}(P)\,u^{d(P)} \right)^{-1},\;\;\, |u|<\frac{1}{q} .
 	\end{align*}
 	\noindent
 By [\cite{ROS}, Proposition $4.3$], we see that if $n\geq d(D)$ then $$\sum_{f\in \mathcal{M}_{n}}\chi_{D}(f)=0 .$$ It implies that $\mathcal{L}(u,\chi_{D})$ is a polynomial of degree at most $d(D)-1$. From \cite{Rud}, $\mathcal{L}(u, \chi_D)$ has a trivial zero at $u=1$ if and only if $d(D)$ is even. This allows us to define the completed $L$-function as
\[
L(s,\chi_{D})=\mathcal{L}(u, \chi_D)=(1-u)^{\lambda}\mathcal{L}^{*}(u, \chi_D)=(1-q^{-s})^{\lambda}{L}^{*}(s, \chi_D),
\]
where 
\begin{align}\label{definition of lambda}
  \lambda= 
 \left\{
 \begin{array}
 [c]{ll}
  1, & \;\text{if\, $d(D)$ even}, \\
 0, &\; \text{if\, $d(D)$ odd},
  \end{array}
 \right.
\end{align}
and $\mathcal{L}^*(u, \chi_D)$ is a polynomial of degree 
\begin{align}\label{delta}
2g=d(D)-1-\lambda
\end{align}
satisfying the functional equation 
\[
\mathcal{L}^*(u, \chi_D)=(qu^2)^{g}\mathcal{L}^*\left(\frac{1}{qu}, \chi_D\right)
\]
Because $\mathcal{L}$ and $\mathcal{L}^*$ are polynomial in $u$, it is convenient to define 
\[
L^*(s, \chi_D)=\mathcal{L}^*(u, \chi_D)
\]
so that the above functional equation can be rewritten as
\[
L^*(s, \chi_D)=q^{(1-2s)g}L^*(1-s, \chi_D).
\]
\noindent
The Riemann hypothesis for curve over finite fields, established by Weil \cite{WEIL}, asserts that all the non-trivial zero of $\mathcal{L}^{*}(u, \chi_{D})$ are lie on the circle $|u|=q^{-1/2}$, i.e,
 \[ 
 \mathcal{L}^{*}(u, \chi_{D})=\prod_{j=1}^{2g}\left(1-u \, \nu_{j}\, \right)\;\; \text{with}\;\, |\nu_{j}|=\sqrt{q}\;\, \text{for all}\,\, j.  
 \]

\noindent
One can define the completed $L$-function in the following way. Set 
\begin{align}\label{Gamma factor of the completed l functions}
X_D (s)=|D|^{\frac{1}{2}-s}X(s),
\end{align}
 where
\begin{align*}
X(s)=\left\{
 \begin{array}
 [c]{ll}
  q^{s-\frac{1}{2}}, &\, \text{if\, $d(D)$ odd} \\
 \frac{1-q^{-s}}{1-q^{-(1-s)}}q^{-1+2s}, &\, \text{if\, $d(D)$ even}.
  \end{array}
 \right.
\end{align*}
Let us consider
\begin{align}\label{Lambda}
\Lambda(s, \chi_D)=L(s, \chi_D)X_D(s)^{-\frac{1}{2}}.
\end{align}
\noindent
Then $\Lambda(s, \chi_D)$ satisfies the symmetric functional equation 
\begin{align}\label{func.equ}
\Lambda(s, \chi_D)=\Lambda(1-s, \chi_D).
\end{align}
\noindent
\subsection{Spectral Interpretation} 
Let $C$ be a non-singular projective curve over $\mathbb{F}_q$ of genus $g$. For each extension field of
degree $k$ of $\mathbb{F}_q$, denote by $N_k(C)$ the number of points of $C$ in $\mathbb{F}_{q^k}$. Then, the zeta
function associated to $C$ defined as
\[
Z_C(u)=\exp\left(\sum_{k=1}^\infty N_k(C)\frac{u^k}{k}\right), \quad |u|<\frac{1}{q},
\]
is known to be a rational function of $u$ of the form
\[
Z_C(u)=\frac{P_C(u)}{(1-u)(1-qu)}.
\]
Additionally, we know that $P_C(u)$ is a polynomial of degree $2g$ with integer coefficients, satisfying
a functional equation
\[
P_C(u)=(qu^2)^g
P_C\left(\frac{1}{qu}\right).
\]

\noindent
The Riemann Hypothesis, proved by Weil \cite{WEIL}, says that the zeros of $P_C(u)$ all lie on the circle $|u| =\frac{1}{\sqrt{q}}$. Thus one may give a spectral interpretation of
$P_C(u)$ as the characteristic polynomial of a $2g \times 2g$ unitary matrix $\Theta_C$:
\[
P_C(u)=\det \left(I- u\sqrt{q}\Theta_C\right).
\]
Thus the eigenvalues $e^{i\theta_j}$ of $\Theta_C$ correspond to the zeros, $q^{-1/2}e^{-i\theta_j}$, of $Z_C(u)$.
The matrix $\Theta_C$ is called the unitarized  Frobenius class of $C$.

\vspace{1mm}
\noindent
To put this in the context of our case, note that, for a family of hyperelliptic curves $C_D:\, y^2=D(t)$ of genus $g$, the numerator of the zeta function $Z_C(u)$  associated to $C_D$ is coincide with the $L$-function $\mathcal{L}^*(u, \chi_D)$, i.e., $P_C(u)=\mathcal{L}^*(u, \chi_D)$.

\section{Preliminary Lemmas}  
We start with an analog of approximate functional equation for $L(s,\chi_{D})$. 
Recall that $2g=n-1-\lambda$ where $\lambda$ is defined as in \eqref{definition of lambda}.
\begin{lemma}[Approximate functional equation]\label{Le1} 
Let $\chi_D$ be a quadratic Dirichlet character, where $D\in \mathcal{H}_n$. Then for $1/2\leq s< 1$, 
	\begin{align*}
	L(s,\chi_{D})&=\sum_{f\in \mathcal{M}_{\leq g}}\frac{\chi_{D}(f)}{|f|^{s}}\; + X_D(s)\, \sum_{f\in \mathcal{M}_{\leq g-1}}\frac{\chi_{D}(f)}{|f|^{1-s}}\\
	&-\lambda q^{-s(g+1)}\sum_{f\in \mathcal{M}_{\leq g}}\chi_D(f)- \lambda \, X_D(s)\, q^{-(1-s)g} \sum_{f\in \mathcal{M}_{\leq g-1}}\chi_D(f),
	\end{align*} 	
	where $X_D(s)$ is defined by \eqref{Gamma factor of the completed l functions} respectively.
\end{lemma}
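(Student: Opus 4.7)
My plan is to exploit the fact that $\mathcal{L}(u,\chi_{D})$ is already a polynomial in $u=q^{-s}$ and then match its two natural representations against the functional equation. Since $\sum_{f\in\mathcal{M}_{d}}\chi_{D}(f)=0$ for every $d\geq d(D)=n$ by Proposition 4.3 of Rosen, one has
\[
L(s,\chi_{D})=\mathcal{L}(u,\chi_{D})=\sum_{d=0}^{n-1}A_{d}\,u^{d},\qquad A_{d}:=\sum_{f\in\mathcal{M}_{d}}\chi_{D}(f).
\]
The symmetric functional equation \eqref{func.equ} together with the definition \eqref{Lambda}, combined with the direct computation $X_{D}(s)X_{D}(1-s)=1$, produces the asymmetric form $L(s,\chi_{D})=X_{D}(s)\,L(1-s,\chi_{D})$, i.e.\ $\sum_{d=0}^{n-1}A_{d}u^{d}=X_{D}(s)\sum_{d=0}^{n-1}A_{d}(qu)^{-d}$.

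Next I would split both representations at the index $d=g$. The truncation $d\leq g$ of the first gives exactly $\sum_{f\in\mathcal{M}_{\leq g}}\chi_{D}(f)/|f|^{s}$, and the truncation $d\leq g-1$ of the second gives $X_{D}(s)\sum_{f\in\mathcal{M}_{\leq g-1}}\chi_{D}(f)/|f|^{1-s}$. After this bookkeeping the lemma reduces to the single identity
\[
X_{D}(s)\sum_{d=g}^{n-1}A_{d}(qu)^{-d}-\sum_{d=0}^{g}A_{d}u^{d}\;=\;-\lambda\,u^{g+1}\sum_{d=0}^{g}A_{d}-\lambda\,X_{D}(s)(qu)^{-g}\sum_{d=0}^{g-1}A_{d},
\]
which I would verify separately for $\lambda=0$ and $\lambda=1$.

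For odd $n$ (so $\lambda=0$, $n=2g+1$) the polynomial $\mathcal{L}$ equals $\mathcal{L}^{*}$, and its self-reciprocal functional equation $\mathcal{L}^{*}(u)=(qu^{2})^{g}\mathcal{L}^{*}(1/(qu))$ translates immediately into the coefficient symmetry $A_{2g-d}=q^{g-d}A_{d}$. Combined with the explicit form $X_{D}(s)=q^{g}u^{2g}$ and the substitution $e=2g-d$, the left-hand side collapses identically to zero, matching the right-hand side. For even $n$ (so $\lambda=1$, $n=2g+2$) the $A_{d}$'s no longer satisfy this clean symmetry because of the factor $(1-u)$ in $\mathcal{L}(u)=(1-u)\mathcal{L}^{*}(u)$. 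The key move is to pass to the partial sums $B_{d}:=\sum_{j=0}^{d}A_{j}$, which are precisely the coefficients of $\mathcal{L}^{*}(u,\chi_{D})=\sum_{d=0}^{2g}B_{d}u^{d}$; these do obey $B_{2g-d}=q^{g-d}B_{d}$, and moreover $B_{2g+1}=\mathcal{L}(1,\chi_{D})=0$. Applying Abel summation to both $\sum_{d=g}^{2g+1}A_{d}(qu)^{-d}$ and $\sum_{d=0}^{g}A_{d}u^{d}$ generates telescoping factors $1-(qu)^{-1}$ and $1-u$ respectively; inserting the explicit formula $X_{D}(s)=q^{g+1}u^{2g+1}(1-u)/(qu-1)$, the factor $1-(qu)^{-1}$ cancels the denominator $qu-1$ of $X_{D}(s)$, and the surviving boundary contributions are exactly $-u^{g+1}B_{g}$ and $-X_{D}(s)(qu)^{-g}B_{g-1}$, which agree with the stated correction since $B_{g}=\sum_{d=0}^{g}A_{d}$ and $B_{g-1}=\sum_{d=0}^{g-1}A_{d}$.

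The main obstacle is the even-degree case: the interaction of the trivial zero of $\mathcal{L}$ at $u=1$ with the pole of $X_{D}(s)$ at $u=1/q$ has to be tracked carefully so that, after Abel summation, only the two advertised correction terms remain. All other steps — the polynomial truncation of the Dirichlet series, the passage to the asymmetric functional equation, and the odd case — are essentially formal.
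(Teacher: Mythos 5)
Your proposal is correct, and it fills in details that the paper itself omits: the paper's ``proof'' of Lemma~\ref{Le1} is only a citation to Andrade--Keating (for $n=2g+1$) and Jung (for $n=2g+2$) at $s=\tfrac12$ with the remark that the argument generalizes. Your argument is precisely the method used in those references: write $\mathcal{L}(u,\chi_D)=\sum_{d=0}^{n-1}A_d u^d$, deduce the asymmetric functional equation $L(s,\chi_D)=X_D(s)L(1-s,\chi_D)$, split at $d=g$, and in the odd case use the palindromic symmetry $A_{2g-d}=q^{g-d}A_d$ (which makes the remainder vanish with no correction terms, consistent with $\lambda=0$), while in the even case pass to the partial sums $B_d=\sum_{j\le d}A_j$ (the coefficients of $\mathcal{L}^{*}$), use $B_{2g-d}=q^{g-d}B_d$ and $B_{2g+1}=\mathcal{L}(1,\chi_D)=0$, and observe that the $(1-u)$ and $\bigl(1-(qu)^{-1}\bigr)$ factors produced by Abel summation cancel against the rational part of $X_D(s)$, leaving exactly the two boundary terms $-u^{g+1}B_g$ and $-X_D(s)(qu)^{-g}B_{g-1}$. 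I verified the even-case identity reduces to $(1-u)\sum_{d=g+1}^{2g}B_d u^d$ on both sides, so the bookkeeping closes. One small remark: the resulting identity is an exact polynomial identity valid for all $u\neq 0$ (away from the pole of $X_D$ at $u=1/q$ when $\lambda=1$); the restriction to $\tfrac12\le s<1$ in the statement is therefore harmless but not needed for the proof.
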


\begin{proof} 
The case $s=\frac{1}{2}$ is proved in \cite{AK1} for $D\in \mathcal{H}_{2g+1}$ and \cite{JUNG} for $D\in \mathcal{H}_{2g+2}$. Their methods can be easily generalized for any $s\in (1/2, 1)$.
\end{proof}
\noindent
The following lemma gives an asymptotic formula for a square polynomial in hyperelliptic ensemble.

\begin{lemma}\label{Le2}
For $f\in \mathcal{M}$, we have 
\begin{equation*}
\frac{1}{|\mathcal{H}_{n}|}\sum_{D\in \mathcal{H}_{n}}\chi_{D}(f^{2})\; =\prod_{\substack{P\in\mathcal{P}\\P\mid f}}\left(1+\frac{1}{|P|} \right)^{-1} \; +\; O(|\mathcal{H}_n|^{-1}). 
\end{equation*}
\end{lemma}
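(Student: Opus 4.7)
The starting observation is that $\chi_D(f^2) = \chi_D(f)^2$ equals $1$ when $(D,f)=1$ and $0$ otherwise, so
\[
\sum_{D \in \mathcal{H}_n}\chi_D(f^2) \;=\; \#\{D \in \mathcal{H}_n : (D, f) = 1\}.
\]
My plan is to extract this count from a generating function, split off a main term that matches the claimed product, and show the remainder contributes only an $O_f(1)$ error before normalization.

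First I would form the series
\[
A_f(u) \;:=\; \sum_{n \geq 0} \#\{D \in \mathcal{H}_n : (D,f)=1\}\, u^n \;=\; \prod_{P \nmid f}\bigl(1 + u^{\deg P}\bigr),
\]
the factorization being immediate from unique factorization of squarefree polynomials. Combining with the Euler identity $\prod_P(1+u^{\deg P}) = \mathcal{Z}(u)/\mathcal{Z}(u^2) = (1-qu^2)/(1-qu)$ (which is simultaneously the generating series $\sum_n |\mathcal{H}_n|u^n$), I would rewrite
\[
A_f(u) \;=\; \frac{1-qu^2}{1-qu}\,G_f(u), \qquad G_f(u) \;:=\; \prod_{P \mid f}\frac{1}{1+u^{\deg P}},
\]
and then isolate the main term by adding and subtracting $G_f(1/q)$:
\[
A_f(u) \;=\; G_f(1/q)\,\frac{1-qu^2}{1-qu} \;+\; \frac{(G_f(u)-G_f(1/q))(1-qu^2)}{1-qu}.
\]
The first summand contributes exactly $G_f(1/q)\,|\mathcal{H}_n| = |\mathcal{H}_n|\prod_{P\mid f}(1+|P|^{-1})^{-1}$ to $[u^n]A_f(u)$, which is precisely the asserted main term.

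For the second summand, the factor $G_f(u)-G_f(1/q)$ vanishes at $u=1/q$ and hence cancels the pole of $(1-qu)^{-1}$ there; what remains is a rational function whose only singularities are zeros of $\prod_{P|f}(1+u^{\deg P})$, all located on the unit circle $|u|=1$. A partial-fraction expansion then gives $[u^n]\bigl\{\text{second summand}\bigr\} = O_f(1)$ uniformly in $n$, so
\[
\#\{D \in \mathcal{H}_n : (D, f) = 1\} \;=\; |\mathcal{H}_n|\,\prod_{P\mid f}(1+|P|^{-1})^{-1} + O_f(1),
\]
and dividing by $|\mathcal{H}_n|\asymp q^n$ yields the claimed error $O(|\mathcal{H}_n|^{-1})$. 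The only mild bookkeeping, which is not a real obstacle, is that distinct primes $P_1,P_2\mid f$ of equal degree can produce higher-order poles of $G_f$ on $|u|=1$; any resulting polynomial-in-$n$ factor is absorbed into the $f$-dependent implicit constant. Otherwise this is a routine density computation.
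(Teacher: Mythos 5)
Your generating-function route is clean and correct in structure, and it is essentially the method that underlies the cited Bui--Florea Lemma $3.7$ (the paper gives no proof of its own, only the reference); the identification $\chi_D(f^2)=\mathds{1}_{(D,f)=1}$, the Euler factorization $A_f(u)=\prod_{P\nmid f}(1+u^{\deg P})=\frac{1-qu^2}{1-qu}\,G_f(u)$, and the decomposition that splits off $G_f(1/q)\,\frac{1-qu^2}{1-qu}$ are all correct and produce exactly the main term of the lemma.

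The weak point is the last step. You correctly note that if $f$ has two or more prime divisors of the same degree $d$, then $G_f(u)$ carries a factor $(1+u^d)^{-m}$ with $m\geq 2$, so the remainder $H_f(u)(1-qu^2)$ has poles of order $m$ on $|u|=1$ and $[u^n]$ of it grows like $n^{m-1}$. You then say this ``polynomial-in-$n$ factor is absorbed into the $f$-dependent implicit constant'' --- but that is not possible: an $O_f(\cdot)$ constant must be independent of $n$, and $n^{m-1}$ genuinely diverges. A concrete check with $f=t(t+1)$ gives
\[
\frac{(G_f(u)-G_f(1/q))(1-qu^2)}{1-qu}=\frac{(2q+1+qu)(1-qu^2)}{(q+1)^2(1+u)^2},
\]
whose $n$-th coefficient contains a term proportional to $(-1)^n(n+1)$, so the unnormalized error is $\asymp_f n$, not $O_f(1)$. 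After dividing by $|\mathcal{H}_n|\asymp q^n$ one gets $O_f\bigl(n^{c(f)}|\mathcal{H}_n|^{-1}\bigr)$ rather than $O\bigl(|\mathcal{H}_n|^{-1}\bigr)$; this is still negligible for every use in the paper, but it is not literally what you (or the lemma) assert. To close the argument you should state the error as $O_f\bigl(n^{\omega(f)}|\mathcal{H}_n|^{-1}\bigr)$ (or bound the partial-fraction expansion explicitly), and note that this suffices.
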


\begin{proof}
See [\cite{BF1}, Lemma $3.7$] for $n=2g+1$.  To get the result for $n=2g+2$, it is a small adaptation of their proof.
\end{proof}	
\noindent
The following lemma is an analog of Polya-Vinogradov inequality over function fields.
\begin{lemma}[Polya-Vinogradov inequality]\label{Le3}
	For $l\in \mathcal{M}$ not a perfect square, let $l=l_{1}{l_{2}}^{2}$ with $l_{1}$ square-free. Then for any $\epsilon>0$, 
\begin{equation*}
\bigg|\sum_{D\in \mathcal{H}_{n}}\chi_{D}(l)\bigg|\;\ll_{\ve}\; \sqrt{|\mathcal{H}_n|}|l_{1}|^{\epsilon}. 
\end{equation*} 
\end{lemma}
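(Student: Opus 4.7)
My plan is to reduce $\sum_{D \in \mathcal{H}_n} \chi_D(l)$ to a character sum over all monic polynomials via the squarefree sieve, and then apply Weil's Riemann Hypothesis to the induced Dirichlet $L$-function. Using the identity $\mathbf{1}_{D\text{ squarefree}} = \sum_{A^2 \mid D} \mu(A)$ and writing $D = A^2 B$ with $B \in \mathcal{M}_{n - 2\deg A}$, the multiplicativity of the Kronecker symbol in its upper argument gives $\chi_{A^2 B}(l) = \mathbf{1}_{(A, l) = 1}\, \chi_l(B)$, so
\begin{equation*}
\sum_{D \in \mathcal{H}_n} \chi_D(l) = \sum_{\substack{A \in \mathcal{M}_{\leq n/2} \\ (A, l) = 1}} \mu(A)\, T(n - 2 \deg A), \qquad T(N) := \sum_{B \in \mathcal{M}_N} \chi_l(B),
\end{equation*}
where $\chi_l(B) = \bigl(\tfrac{B}{l}\bigr) = \chi_{l_1}(B)\cdot \mathbf{1}_{(B, l_2) = 1}$ is the character induced by the primitive quadratic character $\chi_{l_1}$ modulo the squarefree part $l_1$, nontrivial since $l$ is not a perfect square (so $\deg l_1 \geq 1$).

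Next, I would bound the inner sum $T(N)$ using Weil. Its generating function factors as $\mathcal{L}(u, \chi_l) = \mathcal{L}(u, \chi_{l_1}) \prod_{P\mid l_2,\, P \nmid l_1}(1 - \chi_{l_1}(P) u^{\deg P})$, a polynomial in $u$ whose primitive factor $\mathcal{L}(u, \chi_{l_1})$ has degree $\deg l_1 - 1$ with all zeros on the critical circle $|u| = q^{-1/2}$. Applying Cauchy's integral formula on that circle yields
\begin{equation*}
|T(N)| \leq q^{N/2} \cdot \max_{|u| = q^{-1/2}} |\mathcal{L}(u, \chi_l)| \ll_\varepsilon q^{N/2}\,|l_1|^\varepsilon,
\end{equation*}
the last step being a function-field Lindelöf-type estimate on the critical circle. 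Inserting this back, using $|\mathcal{M}_k| = q^k$ and $|\mathcal{H}_n| \asymp q^n$, I get
\begin{equation*}
\Bigl|\sum_{D \in \mathcal{H}_n} \chi_D(l)\Bigr| \leq \sum_{k=0}^{\lfloor n/2 \rfloor} q^k\, |T(n-2k)| \ll_\varepsilon q^{n/2} \cdot n \cdot |l_1|^\varepsilon \ll_\varepsilon \sqrt{|\mathcal{H}_n|}\,|l_1|^\varepsilon,
\end{equation*}
the polynomial factor $n$ being absorbed into $|l_1|^\varepsilon$ by a slight enlargement of $\varepsilon$ (since $|l_1| \geq q$).

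The main obstacle is the pointwise bound $\max_{|u| = q^{-1/2}} |\mathcal{L}(u, \chi_{l_1})| \ll_\varepsilon |l_1|^\varepsilon$ used above. A direct application of Weil only yields $|\mathcal{L}(u, \chi_{l_1})| \leq \prod_j |1 - \beta_j u| \leq 2^{\deg l_1 - 1}$ on the critical circle (since $|\beta_j| = \sqrt{q}$), which translates into a factor $|l_1|^{\log 2/\log q}$ rather than $|l_1|^\varepsilon$. Sharpening this to the Lindel\"of exponent requires either a function-field analogue of Soundararajan's short-Euler-product method (which is unconditional here thanks to Weil's RH) or a more refined exploitation of the M\"obius cancellation in the outer $A$-sum together with reciprocity-type identities; once that pointwise estimate is in hand, the rest of the argument is routine.
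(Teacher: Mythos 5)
Your overall skeleton (square-free sieve $\mathbf{1}_{D\ \mathrm{squarefree}}=\sum_{A^2\mid D}\mu(A)$, reduction to the complete sums $T(N)=\sum_{B\in\mathcal{M}_N}\bigl(\tfrac{B}{l}\bigr)$, then Weil plus a contour argument) is a legitimate route, and it is different from the paper's, which instead adapts the elementary arguments of Bui--Florea (\cite{BF1}, Lemma 3.5 and \cite{BF2}, Lemma 3.1/3.5), essentially an induction over the prime factors of $l_1$ with the cases $\deg l_1\le n$ and $\deg l_1>n$ treated separately, and with no Lindel\"of-type input. However, your proposal has a genuine gap precisely at the point where the lemma is hard: the estimate $\max_{|u|=q^{-1/2}}|\mathcal{L}(u,\chi_{l_1})|\ll_{\ve}|l_1|^{\ve}$ is asserted but not proved, and as you yourself observe, Weil's RH alone only gives $2^{\deg l_1-1}=|l_1|^{\log 2/\log q}$, i.e.\ a fixed positive power of $|l_1|$. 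Since every other step of your argument is routine, the entire content of the lemma has been displaced into this unproven pointwise bound; saying that it ``requires a function-field analogue of Soundararajan's method or a more refined exploitation of the M\"obius cancellation'' identifies the obstacle but does not close it. (Such a bound is in fact obtainable --- e.g.\ by the argument of Lemma \ref{Le8} with a suitable truncation length, or from the Lindel\"of bound of Altu\u{g}--Tsimerman \cite{AS} already used elsewhere in the paper --- but it must be invoked or proved, not postulated.)

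Two further steps are not valid as written even if the Lindel\"of input is granted. First, you bound $\max_{|u|=q^{-1/2}}|\mathcal{L}(u,\chi_l)|$ by $|l_1|^{\ve}$, but the imprimitive Euler factors $\prod_{P\mid l_2,\,P\nmid l_1}\bigl(1-\chi_{l_1}(P)u^{\deg P}\bigr)$ contribute on the critical circle a factor as large as $(1+q^{-1/2})^{\omega(l_2)}$, which is unbounded in terms of $l_1$ (take $\deg l_1=1$ and $l_2$ highly composite); the $l_2$-part must be handled separately, and keeping the final bound free of any $l_2$-dependence is exactly the delicate uniformity the lemma claims. Second, the absorption of the factor $n$ into $|l_1|^{\ve}$ ``since $|l_1|\ge q$'' is false: $|l_1|^{\ve}$ stays bounded when $\deg l_1$ is bounded while $n\to\infty$. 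One can replace $n$ by $O(\deg l)$ by noting that $T(N)=0$ for $N\ge \deg l$ (the $L$-polynomial of a non-principal character mod $l$ has degree $<\deg l$), but this still leaves a dependence on $\deg l_2$. Thus, as written, your argument yields at best a bound of the shape $\sqrt{|\mathcal{H}_n|}\,|l|^{\ve}$ (with $l$, not $l_1$), which would suffice for the application to $S_2$ but is weaker than the stated lemma, and its central analytic estimate is missing.
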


\begin{proof}
One can easily generalize the above inequality which was proved in [\cite{BF2}, Lemma $3.5$] for $n=2g+1$. Here we give a different proof in the above form for completeness.\\
First assume that $l_1=P_1P_2\ldots P_k$, where $P_j$'s are distinct prime polynomials, and $\deg (l_1)\leq n$. Similar to the proof of Lemma $3.5$ in \cite{BF1}, which in particular case $k=2$, one can show:
\[
\bigg|\sum_{D\in \mathcal{H}_n}\chi_D(l)\bigg|=\bigg|\sum_{D\in \mathcal{H}_n}\chi_D(l_1)\bigg|\leq \frac{qg^{k-1}\left(d(P_1)+\ldots+d(P_k)\right)}{d(P_1)\ldots d(P_k)}|l_1|^{\frac{1}{2}}\ll_{\epsilon} \sqrt{|\mathcal{H}_n|}|l_1|^{\epsilon}.
\]
Finally let $\deg(l_1)>n$. We combine Lemma $3.1$ of \cite{BF2} and Lemma $3.5$ of \cite{BF1}  to obtain
$$
\bigg|\sum_{D\in \mathcal{H}_n}\chi_D(l_1)\bigg|\ll_{\epsilon} \sqrt{|\mathcal{H}_n|}|l_1|^{\epsilon}.
$$
\end{proof}
The following lemma gives an upper bound for the logarithm of $\mathcal{L}(u,\chi_D)$ inside the critical region. 
\begin{lemma}\label{Le8}
	Let $0\leq \alpha\leq \tfrac{1}{2}$, $v=e^{i\theta}$, $\theta\in [0,\pi)$ and $N$ be a positive integer. Then for $D\in \mathcal{H}_{n}$,
   \begin{align*}
	\log{\Big|\mathcal{L}\Big(\frac{v}{q^{\frac{1}{2}+\alpha}},\chi_{D}\Big)\Big|}\leq \tfrac{2g}{N+1}\log\left(\frac{1+q^{-\alpha(N+1)}} {1+q^{-2(N+1)}} \right) + \Re \sum_{d(f)\leq N}\frac{a_{\alpha}\left(d(f)\right) \chi_{D}(f) \,\Lambda(f) v^{ d(f)}} {|f|^{\frac{1}{2}}} +O(1),	
   \end{align*} 
  where
     \begin{align*}
  a_{\alpha}(d(f))=\frac{1}{d(f)|f|^{\alpha}} -\frac{1}{d(f)|f|^{2}} \;+\;  O\left(\frac{1}{(N+1)q^{(N+1)\alpha}}\right), \quad \text{ for } 1\leq d(f)\leq N.
   \end{align*}
\end{lemma}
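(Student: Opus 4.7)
The plan is to combine Weil's Riemann Hypothesis for the completed $L$-function $\mathcal{L}^{*}(u,\chi_{D})$ with a Soundararajan-style truncation of $\log|\mathcal{L}|$, together with the conversion from a sum over zeros to a sum over prime powers via a Newton-type identity.

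First, separate the pole factor: $\mathcal{L}(u,\chi_{D})=(1-u)^{\lambda}\mathcal{L}^{*}(u,\chi_{D})$, and note that at $u=vq^{-1/2-\alpha}$ one has $|1-u|\leq 2$, hence $\lambda\log|1-u|=O(1)$. By Weil's theorem, $\mathcal{L}^{*}(u,\chi_{D})=\prod_{j=1}^{2g}(1-u\nu_{j})$ with $|\nu_{j}|=\sqrt{q}$, so setting $z_{j}:=u\nu_{j}$ gives $|z_{j}|=q^{-\alpha}\leq 1$ and
\[
\log\bigl|\mathcal{L}(vq^{-1/2-\alpha},\chi_{D})\bigr|\;=\;\sum_{j=1}^{2g}\log|1-z_{j}|\;+\;O(1).
\]

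Second, for each $z$ with $|z|=q^{-\alpha}\leq 1$ I would prove a pointwise inequality of the form
\[
\log|1-z|\;\leq\;\frac{1}{N+1}\log\!\Bigl(\frac{1+|z|^{N+1}}{1+q^{-2(N+1)}}\Bigr)\;-\;\Re\sum_{k=1}^{N}\frac{z^{k}-\bigl(q^{-2}z/|z|\bigr)^{k}}{k}\;+\;O\!\Bigl(\frac{1}{(N+1)q^{(N+1)\alpha}}\Bigr).
\]
The derivation starts from the absolutely convergent Taylor expansion $-\log(1-w)=\sum_{k\geq 1}w^{k}/k$ applied both at $w=z$ and at the reference point $w=z':=q^{-2}z/|z|$ (so that $|z'|=q^{-2}$ lies deep in the region of absolute convergence of the associated Dirichlet series), subtracting the two series, truncating at level $N$ with Fej\'er--Ces\`aro--smoothed weights, and absorbing the two tails into the single logarithmic ratio $(1+|z|^{N+1})/(1+q^{-2(N+1)})$. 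The subtraction against $z'$ is what produces the $-1/|f|^{2}$ ingredient of $a_\alpha$ in Step~3 below, and it simultaneously cancels the singularity that a crude tail estimate would develop as $\alpha\downarrow 0$.

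Third, sum the inequality of Step~2 over $j=1,\ldots,2g$. The constant term contributes $\frac{2g}{N+1}\log\frac{1+q^{-\alpha(N+1)}}{1+q^{-2(N+1)}}$, matching the desired main error. For the linear part, compute $\sum_{j}z_{j}^{k}=v^{k}q^{-k(1/2+\alpha)}\sum_{j}\nu_{j}^{k}$ and use the Newton identity
\[
\sum_{j=1}^{2g}\nu_{j}^{k}\;=\;-\sum_{d(f)=k}\Lambda(f)\chi_{D}(f)\;-\;\lambda,
\]
obtained by equating the coefficients of $u^{k}$ in the two expansions of $\log\mathcal{L}^{*}(u,\chi_{D})$ about $u=0$ (the product-over-zeros and the Euler-product expansion). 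This converts both the $z^{k}$ sum and the $(z')^{k}$ sum into prime-power sums; recognising $|f|^{\alpha}=q^{\alpha d(f)}$ and $|f|^{2}=q^{2d(f)}$, their combination produces exactly the weight $a_{\alpha}(d(f))=\tfrac{1}{d(f)|f|^{\alpha}}-\tfrac{1}{d(f)|f|^{2}}+O\bigl(\tfrac{1}{(N+1)q^{(N+1)\alpha}}\bigr)$ on $\chi_{D}(f)\Lambda(f)v^{d(f)}/|f|^{1/2}$. The $-\lambda$ contribution in the Newton identity gives $\lambda\Re\sum_{k\leq N}v^{k}q^{-k(1/2+\alpha)}/k$, bounded by $-\log(1-q^{-1/2})=O(1)$; this, together with the pole factor from Step~1, is absorbed into the final $O(1)$.

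The principal obstacle is establishing the pointwise inequality of Step~2 with its precise shape. A crude tail bound $\sum_{k>N}|z|^{k}/k\leq q^{-\alpha(N+1)}/((N+1)(1-q^{-\alpha}))$ yields the correct order of magnitude for $\alpha$ bounded below but diverges as $\alpha\downarrow 0$; the refined ratio $(1+q^{-\alpha(N+1)})/(1+q^{-2(N+1)})$ inside the logarithm is precisely what keeps the estimate uniformly bounded throughout $\alpha\in[0,1/2]$, and extracting it requires the two-point subtraction against the reference $z'$ together with a careful Ces\`aro-type smoothing of the weights, encoded in the $O(1/((N+1)q^{(N+1)\alpha}))$ adjustment to $a_\alpha$.
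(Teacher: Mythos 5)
Your overall strategy matches the one the paper actually uses: the paper appeals to the functional equation to write $|\Lambda(\alpha+it)|$ as a ratio involving a far-right reference point at $\Re(s)=5/2$, converts to the product over the $2g$ zeros, and then cites Florea's Lemma~8.1 (which is a Soundararajan--Young style Ces\`aro truncation) for the remaining work. Your decomposition $\mathcal{L}=(1-u)^{\lambda}\mathcal{L}^{*}$, the identification $z_{j}=u\nu_{j}$ with $|z_{j}|=q^{-\alpha}$, the choice of reference $z'=q^{-2}z/|z|$ (which is exactly $\Re(s)=5/2$), and the Newton identity $\sum_{j}\nu_{j}^{k}=-\sum_{d(f)=k}\Lambda(f)\chi_{D}(f)-\lambda$ are all correct and correspond to the paper's setup.

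The gap is in Step~2, where you claim a pointwise inequality of the form
\begin{align*}
\log|1-z|\leq \tfrac{1}{N+1}\log\Bigl(\tfrac{1+|z|^{N+1}}{1+q^{-2(N+1)}}\Bigr)-\Re\sum_{k=1}^{N}\tfrac{z^{k}-(z')^{k}}{k}+O\Bigl(\tfrac{1}{(N+1)q^{(N+1)\alpha}}\Bigr).
\end{align*}
This is false as written. Subtracting the $N$-term partial sums from $\log|1-z|=-\Re\sum_{k\geq 1}z^{k}/k$ leaves the tail $\Re\sum_{k>N}z^{k}/k$, and the best uniform lower bound on that tail (Abel summation) is $-|z|^{N+1}/\bigl((N+1)(1-|z|)\bigr)$. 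With $|z|=q^{-\alpha}$ and $\alpha=O(1/g)$ — the regime Theorem~4 actually runs in — one has $1-|z|\asymp 1/g$, so the tail is only $O\bigl(g\,q^{-(N+1)\alpha}/(N+1)\bigr)$, a factor of $g$ larger than your claimed $O$-term; and summing over the $2g$ zeros makes this $O\bigl(g^{2}q^{-(N+1)\alpha}/(N+1)\bigr)$, which destroys the $O(1)$ in the lemma. You do flag the singularity as $\alpha\downarrow 0$ and gesture at Fej\'er--Ces\`aro smoothing, but the inequality you wrote still carries the unweighted $1/k$ coefficients; the Ces\`aro weight $(N+1-k)/(N+1)$ must actually appear in the truncated sum. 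The real engine here, which your sketch does not supply, is the positivity of the Fej\'er kernel: the function $\log\bigl((a^{2}+\sin^{2}x)/(b^{2}+\sin^{2}x)\bigr)$ (the paper's trigonometric reformulation, equivalent to your $\log|1-z|-\log|1-z'|$) has cosine coefficients of a fixed sign, so convolving with the nonnegative Fej\'er kernel gives a genuine one-sided majorant that is \emph{uniform} in the angle, with no $1/|1-z|$ blow-up. That positivity step is what lets the per-zero error be absorbed into the explicit leading term $\tfrac{2g}{N+1}\log\bigl(\cdots\bigr)$ and into the coefficient adjustment $O\bigl(1/((N+1)q^{(N+1)\alpha})\bigr)$ inside $a_{\alpha}$, rather than accumulating an extra factor of $g$. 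Your proposal identifies the right ingredients but does not actually establish the pointwise inequality that carries the whole argument.
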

\begin{proof}

From the functional equation \eqref{func.equ}, we observe that 
  \[
  \Big|\Lambda(\alpha+it,\chi_{D})\Big|=\bigg|\frac{\Lambda\left(\frac{5}{2}-it,\chi_{D}\right) \Lambda(1-\alpha-it,\chi_{D})}{\Lambda\left(-\frac{3}{2}+it,\chi_{D}\right)}\bigg|=\frac{|\Lambda\left(\frac{5}{2}-it,\chi_{D}\right)| |\Lambda(1-\alpha+it,\chi_{D})|}{|\Lambda\left(-\frac{3}{2}+it,\chi_{D}\right)|}.
  \]
  Recall that
  \[
  L(\alpha+it,\chi_{D})= \left(1-q^{-\alpha-it}\right)^{\lambda}{L}^{*}(\alpha+it,\chi_{D}).
  \]
  Note that $\Big|{L}^{*}\left(\frac{5}{2}-it,\chi_{D}\right)\Big|\sim 1$. Using the expression \eqref{Lambda} for $\Lambda(s,\chi_{D})$, we get \begin{align*}
 \big|{L}(\alpha+it,\chi_{D})\big|=q^{g(5-2 \alpha)} \big|1-q^{-\alpha- it}\big|^{\lambda} \prod_{j=1}^{2g} \left(\frac{q^{2\alpha-1} +1-2q^{\alpha-\frac{1}{2}}\cos(2\pi\theta_{j}-t\log{q})}{q^{4} +1-2q^{2}\cos(2\pi\theta_{j}-t\log{q})}\right)^{\frac{1}{2}},
 \end{align*}
  Since
 \[
 q^{2\alpha-1} +1-2q^{\alpha-\frac{1}{2}}\cos(2\pi\theta_{j}-t\log{q})=(q^{\alpha-\frac{1}{2}}-1)^{2}+ 4q^{\alpha-\frac{1}{2}}{\sin}^{2}\left(\pi\theta_{j}-\frac{t\log{q}}{2}\right)
 \]
  with a similar expression holding for the denominator, it follows that
 \begin{align*}
  \log\left|L(\alpha+it,\chi_{D})\right|= g\left(\frac{5}{2}-\alpha\right)\log{q} -\frac{1}{2}\sum_{j=1}^{2g}\log\left(\frac{a^{2}+{\sin}^{2}(\pi\theta_{j}-\frac{t\log{q}}{2})}{b^{2}+{\sin}^{2}(\pi\theta_{j}-\frac{t\log{q}}{2})}\right) +O(1),
 \end{align*}
 where $$a=\frac{q^{2}-1}{2q},\; b=\frac{q^{\alpha-\frac{1}{2}}-1}{2q^{\frac{\alpha}{2}-\frac{1}{4}}}.$$
 The remaining part of the proof is the same as the proof of Lemma $8.1$ in \cite{FL3} proved by A. Florea.
\end{proof}

\begin{lemma}\label{Le9}
	Let $\theta\in (-\pi, \pi)$, then we have\,\, $\displaystyle\sum_{m=1}^{n}\frac{\cos(\theta m)}{m}\leq \log\left(\min\Big\{n,\frac{1}{|\theta|}\Big\} \right)+ O(1).$
\end{lemma}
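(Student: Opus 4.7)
The plan is to split the range $1 \leq m \leq n$ at the threshold $M = \lfloor 1/|\theta| \rfloor$, handling the two pieces by fundamentally different means. On the short piece $m \leq M$, the phase $\theta m$ has not yet made a full revolution, so $\cos(\theta m)$ provides essentially no cancellation and the trivial bound $|\cos(\theta m)| \leq 1$ already gives what we want; the harmonic sum delivers $\sum_{m \leq M} 1/m \leq \log M + O(1) = \log(1/|\theta|) + O(1)$. On the long piece $M < m \leq n$ the oscillation of $\cos(\theta m)$ must be exploited, and this will be done by partial summation against the uniform bound on the Dirichlet kernel.

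First, if $n \leq 1/|\theta|$, then $\min\{n, 1/|\theta|\} = n$ and the trivial bound $\sum_{m=1}^{n} \cos(\theta m)/m \leq \sum_{m=1}^n 1/m = \log n + O(1)$ already finishes the proof. Henceforth assume $n > 1/|\theta|$, so that $\min\{n, 1/|\theta|\} = 1/|\theta|$. Writing $S(N) = \sum_{m=1}^{N} \cos(\theta m) = \Re\bigl(e^{i\theta}(e^{i\theta N}-1)/(e^{i\theta}-1)\bigr)$, one reads off the uniform estimate
\[
|S(N)| \leq \frac{1}{|\sin(\theta/2)|} \leq \frac{\pi}{|\theta|},\qquad N \geq 1,
\]
valid for all $\theta \in (-\pi,\pi)\setminus\{0\}$. (If $\theta = 0$ the bound of the lemma is just the harmonic sum bound and there is nothing to do.)

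Next, I would apply Abel summation to $\sum_{M < m \leq n} \cos(\theta m)/m$, writing it as
\[
\frac{S(n) - S(M)}{n} \;+\; \sum_{m=M+1}^{n-1} \bigl(S(m) - S(M)\bigr)\Bigl(\tfrac{1}{m} - \tfrac{1}{m+1}\Bigr).
\]
Using $|S(m) - S(M)| \leq 2\pi/|\theta|$ for every $m$ and $1/m - 1/(m+1) = 1/(m(m+1))$, this is bounded in absolute value by
\[
\frac{2\pi/|\theta|}{n} + \frac{2\pi}{|\theta|}\sum_{m=M+1}^{n-1}\frac{1}{m(m+1)} \;\leq\; \frac{2\pi/|\theta|}{n} + \frac{2\pi/|\theta|}{M+1} \;=\; O(1),
\]
since $M + 1 > 1/|\theta|$ and $n > 1/|\theta|$. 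Combined with the trivial bound on the short range $m \leq M$, this gives $\sum_{m=1}^{n} \cos(\theta m)/m \leq \log(1/|\theta|) + O(1)$, which is exactly the asserted inequality in the remaining case.

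The argument is essentially routine; there is no real obstacle beyond making sure the Dirichlet-kernel estimate $|S(N)| \ll 1/|\theta|$ is uniform in $N$ and holds throughout the full range $\theta \in (-\pi,\pi)$, which is immediate from the closed form of the geometric sum. The only mild care point is to verify that the implied constants do not blow up as $|\theta| \to \pi$, but this is handled by the bound $|\sin(\theta/2)| \geq |\theta|/\pi$ for $\theta \in (-\pi,\pi)$ used above.
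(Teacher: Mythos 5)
Your proof is correct, and it follows the standard argument: split at $M=\lfloor 1/|\theta|\rfloor$, bound the short range trivially by the harmonic sum, and control the tail by Abel summation against the uniform Dirichlet-kernel estimate $|S(N)|\le 1/|\sin(\theta/2)|\le\pi/|\theta|$. The paper does not present its own proof but defers to Lemma~9.1 of Florea's fourth-moment paper, which rests on exactly the same decomposition and partial-summation device, so your route matches the cited one in all essentials.
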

\begin{proof}
	See [\cite{FL3}, Lemma $9.1$].
\end{proof}	
\begin{lemma}\label{Le10}
	Let $k,\, y$ be integers such that $2ky\leq n$. For any complex numbers $\{a(P)\}_{p\in \mathcal{P}}$, we have
	\begin{align*}
	\sum_{D\in \mathcal{H}_{n}}\bigg|\sum_{d(P)\leq y}\frac{a(P)\chi_{D}(P)}{|P|^{\frac{1}{2}}}\bigg|^{2k}\ll \, |\mathcal{H}_n| \,\, \frac{(2k)!}{k!\, 2^{k}} \left(\sum_{d(P)\leq y}\frac{|a(P)|^{2}}{|P|} \right)^{k}. 
	\end{align*}
\end{lemma}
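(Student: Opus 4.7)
The approach is the classical moment method for short Dirichlet polynomials supported on primes. The key idea is to expand the $2k$-th power, exchange orders of summation, and then split the resulting inner character sum $\sum_{D\in\mathcal{H}_n}\chi_D(P_1\cdots P_{2k})$ into a diagonal (square) contribution handled by Lemma \ref{Le2} and an off-diagonal (non-square) contribution handled by Lemma \ref{Le3}. The hypothesis $2ky\leq n$ is precisely the condition that makes the Polya--Vinogradov estimate profitable, because it forces $d(P_1\cdots P_{2k})\leq n$.

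Concretely, since $\chi_D$ is real-valued, so that $\overline{\chi_D(P)}=\chi_D(P)$, I would start from
\[
\bigg|\sum_{d(P)\leq y}\frac{a(P)\chi_D(P)}{|P|^{1/2}}\bigg|^{2k} \leq \sum_{\substack{P_1,\ldots,P_{2k}\\ d(P_i)\leq y}}\frac{|a(P_1)|\cdots|a(P_{2k})|}{|P_1\cdots P_{2k}|^{1/2}}\,\chi_D(P_1\cdots P_{2k}),
\]
then swap the $D$-summation inside and partition according to whether $f:=P_1\cdots P_{2k}$ is a perfect square. In the square case, Lemma \ref{Le2} gives $\sum_{D\in\mathcal{H}_n}\chi_D(f)\ll|\mathcal{H}_n|$, and $f$ is a square precisely when every prime appears an even number of times among the $P_i$. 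The dominant configurations are the $\frac{(2k)!}{k!\,2^k}$ perfect matchings of the $2k$ indices into $k$ disjoint pairs (each pair sharing one prime), and summing over these yields exactly the claimed main term
\[
|\mathcal{H}_n|\,\frac{(2k)!}{k!\,2^k}\bigg(\sum_{d(P)\leq y}\frac{|a(P)|^2}{|P|}\bigg)^k.
\]
Configurations where some prime appears $4$ or more times carry an extra factor $|P|^{-1}\leq q^{-1}$ relative to the diagonal and are absorbed into the implicit constant.

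For the non-square contribution, Lemma \ref{Le3} provides $|\sum_{D\in\mathcal{H}_n}\chi_D(f)|\ll \sqrt{|\mathcal{H}_n|}\,|f_1|^{\epsilon}$, where $f_1$ is the square-free part of $f$; since $d(f)\leq 2ky\leq n$ we have $|f_1|^{\epsilon}\ll q^{n\epsilon}\ll|\mathcal{H}_n|^{\epsilon}$. Bounding the remaining prime sum trivially by $\bigl(\sum_{d(P)\leq y}|a(P)|/|P|^{1/2}\bigr)^{2k}$ and applying Cauchy--Schwarz together with the prime polynomial theorem \eqref{prime poly th} then shows the off-diagonal contribution is smaller than the main term by a power of $|\mathcal{H}_n|$ (essentially a factor $|\mathcal{H}_n|^{-1/2+\epsilon}$). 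The main obstacle I anticipate is precisely this final verification, namely showing that the $\sqrt{|\mathcal{H}_n|}$ saving from Polya--Vinogradov beats the crude count of non-square prime tuples; this is exactly where the hypothesis $2ky\leq n$ does its essential work by capping the conductor appearing in Lemma \ref{Le3}. The combinatorial identification of the factor $\frac{(2k)!}{k!\,2^k}$ is routine once the diagonal has been isolated.
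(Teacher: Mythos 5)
Your overall skeleton (expand the $2k$-th power, diagonal via Lemma \ref{Le2}, off-diagonal via Lemma \ref{Le3}, matchings giving $\tfrac{(2k)!}{k!\,2^k}$) is the standard one; the paper itself gives no argument at all, only the citation to Lemma 8.4 of \cite{FL3} and Lemma 6.3 of \cite{SY}. But there are two problems with your write-up, one cosmetic and one genuine.

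The cosmetic one: your opening pointwise inequality is false. You cannot replace $a(P_i)$ by $|a(P_i)|$ while keeping $\chi_D(P_1\cdots P_{2k})$, since $\chi_D$ takes the value $-1$ (e.g.\ two primes with $a(P)=1$, $a(Q)=-1$, $\chi_D(P)=1$, $\chi_D(Q)=-1$ already violates it). The fix is standard: first sum over $D$, then put absolute values on the coefficients in the square case (where $\sum_D\chi_D(f)\ge 0$) and on everything in the non-square case. This is easily repaired.

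The genuine gap is exactly the step you flagged as the ``final verification'': your off-diagonal estimate does not close under the hypothesis $2ky\le n$, and the claimed saving of $|\mathcal{H}_n|^{-1/2+\epsilon}$ over the main term is not there. Termwise, Lemma \ref{Le3} gives $\sqrt{|\mathcal{H}_n|}\,|f_1|^{\epsilon}\le \sqrt{|\mathcal{H}_n|}\,q^{2ky\epsilon}$, and bounding the coefficient sum trivially and then by Cauchy--Schwarz costs a factor $\#\{P: d(P)\le y\}^{k}\asymp (q^{y}/y)^{k}$, so your off-diagonal bound is
\begin{align*}
\ll \sqrt{|\mathcal{H}_n|}\,q^{2ky\epsilon}\Big(\sum_{d(P)\le y}\frac{|a(P)|}{|P|^{1/2}}\Big)^{2k}
\ll |\mathcal{H}_n|^{1/2}\,q^{\epsilon n}\,\frac{q^{ky}}{y^{k}}\Big(\sum_{d(P)\le y}\frac{|a(P)|^{2}}{|P|}\Big)^{k}.
\end{align*}
Since $q^{ky}$ can be as large as $q^{n/2}\asymp |\mathcal{H}_n|^{1/2}$, the Polya--Vinogradov saving is entirely consumed by the Cauchy--Schwarz loss, and you are left with an uncompensated factor $q^{\epsilon n}/y^{k}$ over the claimed right-hand side; this is unbounded (already for $k=1$, $y=n/2$, $a(P)\equiv 1$ your chain gives $q^{n(1+\epsilon)}/y^{2}$ against a target of order $|\mathcal{H}_n|\log y$). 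The prime polynomial theorem does not rescue this: the issue is structural, because at the boundary $2ky=n$ any loss that is exponential in $d(f)$ (whether $|f_1|^{\epsilon}$ or the sharper $2^{d(f)}$ coming from the Weil bound) is fatal to a purely termwise treatment of the non-square terms. A correct proof must either assume a range with genuine power saving ($2ky\le(1-\delta)n$, then choose $\epsilon$ small in terms of $\delta$), or treat the off-diagonal less crudely -- for instance, applying the character-sum bound only to the square-free kernel $f_1$ while handling the paired part $f_2^{2}$ like the diagonal, with bookkeeping fine enough that the total loss is at most $C^{k}$ times polynomial factors, as in the arguments of \cite{FL3} and \cite{SY} that the paper invokes. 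As it stands, your proposal establishes the diagonal term but not the lemma.
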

\begin{proof}
This is an easy generalization of the Lemma $8.4$ of \cite{FL3} and Lemma $6.3$ of \cite{SY}.		 
\end{proof} 

During the study of our main theorems it seems interesting to estimate the following bounds for the zeta function over function fields. This is an analog of bounding the Riemann zeta function near to $1$-line.
\begin{lemma}\label{Le11}
	Let $v=e^{ i\theta}$, where $\theta\in (-\pi, \pi)$. 
	Let $C$ be a circle of radius $\frac{\widetilde{r}}{g}$ centred at $\frac{1}{q}$,
	where 
	\[
	\widetilde{r}=\lim_{g\to \infty}g\, |\theta|< \infty.
	\]
For any $u$ in $C$,  we have 
	 $$\mathcal{Z}(uv)\ll g\quad \text{if}\;\,\lim_{g\to \infty}g\, |\theta|< \infty.$$
For any $u$ such that $|u-\frac{1}{q}|=O(1/g)$, we have
	   \[
	  \mathcal{Z}(uv)\ll \frac{1}{|\theta|}\quad \text{if}\;\,\lim_{g\to \infty}g\, |\theta|=\infty. 
	  \]

\end{lemma}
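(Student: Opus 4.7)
The plan is to observe that $\mathcal{Z}(u)=(1-qu)^{-1}$, so in both cases the claim reduces to a uniform lower bound on $|1-quv|$. In either regime I will write $u=\tfrac{1}{q}+w$, giving
\[
1 - quv \;=\; (1-v) - q w v,
\]
and the whole analysis then amounts to comparing $|1-v|=2|\sin(\theta/2)|\asymp|\theta|$ with the perturbation $q|w|$.

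For the case $\lim_{g\to\infty} g|\theta|=\widetilde{r}<\infty$, I would parameterize $u=\tfrac{1}{q}+\tfrac{\widetilde{r}}{g}e^{i\phi}$ on $C$ and use the reverse triangle inequality to get
\[
|1-quv| \;\geq\; \Bigl| \tfrac{q\widetilde{r}}{g} - |1-v| \Bigr|.
\]
Since $g|\theta|\to\widetilde{r}$, we have $|1-v|=\widetilde{r}/g+o(1/g)$, while $q\widetilde{r}/g$ is larger by the factor $q$. Using that $q$ is an odd prime power (hence $q\geq 3$) together with $\widetilde{r}>0$, this yields, for all sufficiently large $g$ and uniformly in $\phi$,
\[
|1-quv| \;\geq\; \tfrac{(q-1)\widetilde{r}}{g}+o(1/g) \;\gg_{q,\widetilde{r}}\; \tfrac{1}{g},
\]
which gives $\mathcal{Z}(uv)\ll g$.

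For the case $\lim_{g\to\infty} g|\theta|=\infty$, one has $|w|=O(1/g)$ while $|1-v|\asymp|\theta|$, so the ordinary triangle inequality gives
\[
|1-quv|\;\geq\;|1-v|-q|w|\;\gg\;|\theta|-O(1/g).
\]
Because $g|\theta|\to\infty$, the first term dominates once $g$ is large enough, so $|1-quv|\gg|\theta|$ and therefore $\mathcal{Z}(uv)\ll 1/|\theta|$.

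The nontrivial step will be the first case, where the perturbation $q|w|$ and the ``main'' quantity $|1-v|$ are both of size $1/g$ and could, a priori, cancel. The essential (and rather amusing) input that prevents this is the arithmetic fact $q\geq 3$: it forces a definite gap of order $(q-1)\widetilde{r}/g$ between $q|w|$ and $|1-v|$ on the circle $C$, so the reverse triangle estimate does not collapse. Once this observation is made, everything else is triangle-inequality bookkeeping, and the second case is essentially automatic.
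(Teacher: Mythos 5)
Your proof is correct, but for the finite-shift case it takes a genuinely different route from the paper. For $\lim g|\theta|=\infty$ your triangle-inequality estimate is essentially the paper's own argument (the paper factors $1-quv=(1-v)\bigl(1+\tfrac{v}{1-v}(1-qu)\bigr)$ and bounds the correction by $o(1)$, which is the same computation). For $\lim g|\theta|=\widetilde{r}<\infty$, however, the paper does not argue via the rational form of $\mathcal{Z}$ at all: it passes to $u=q^{-s}$, expands $\log|\mathcal{Z}|$ through the Euler product and the prime polynomial theorem, and invokes Lemma \ref{Le9} on $\sum_{n\le g}\cos(n\theta)/n$ to get $\log|\mathcal{Z}(uv)|\le\log\min\{g,1/|\theta|\}+O(1)$. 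You instead exploit the closed form $\mathcal{Z}(w)=(1-qw)^{-1}$ and locate the pole $u_0=\tfrac{1}{q}e^{-i\theta}$ relative to the circle $C$: since $|u_0-\tfrac1q|\approx|\theta|/q\approx\widetilde{r}/(qg)$ while $C$ has radius $\widetilde{r}/g$, the reverse triangle inequality gives $|1-quv|\ge\tfrac{(q-1)\widetilde{r}}{g}+o(1/g)\gg 1/g$ uniformly on $C$. (The essential input is just $q>1$, not specifically $q\ge 3$, but that is harmless.) Each approach has its merits: the paper's argument parallels the classical zeta-function bound near the $1$-line, produces the $\min\{g,1/|\theta|\}$ shape directly, and would survive in settings without a closed formula; your argument is more elementary and is valid verbatim on the entire circle $C$, including the arc where $|u|\ge q^{-1}$ and the Dirichlet series/Euler product used by the paper no longer converges, so it is in fact the more robust verification here. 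One caveat you correctly flag: you need $\widetilde{r}>0$ to get the gap $(q-1)\widetilde{r}/g$; for $\widetilde{r}=0$ the circle degenerates and the stated bound can fail, but the paper's proof implicitly needs $\widetilde{r}>0$ as well (to control its tail sum $\sum_{n>g}q^{-\widetilde{r}n/g}/n$), and in the application the radius is $\widetilde{c}/g$ with $\widetilde{c}\ge 1$, so this is not a genuine gap.
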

\begin{proof}
	
	\noindent
	First assume that $\theta\in(-\pi,\,\pi)$ be such that $\displaystyle\lim_{g\to \infty}g\, |\theta|=\infty$ . Then using $|u-\frac{1}{q}|=O\left(1/g\right)$, we have the following estimates:
	 \[
	 \bigg|\frac{v}{(1- v)}(1-qu) \bigg|=o(1)
	 \]
	  and 
\[
|(1- v)^{-1}|\ll \frac{1}{|\theta|}.
\]
\noindent
Thus
\[
|\mathcal{Z}(uv)|=|(1-quv)^{-1}|=\bigg|(1- v)^{-1} \left( 1+ \frac{v}{(1- v)}(1-qu)\right)^{-1}\bigg|\ll \frac{1}{|\theta|}.
\]

\noindent	
Finally let $\theta$ be such that $\displaystyle\lim_{g\to \infty}g\, |\theta|<\infty$. Then $|u-\frac{1}{q}|\leq \frac{\widetilde{r}}{g}$.	
 We use the change of variable $u=q^{-s}$ to get the hypothesis of the form $|s-1|\leq \frac{\widetilde{r}}{g}$.
Since  \[
 \mathcal{Z}(uv)=\sum_{f \in \mathcal{M}} (uv)^{\deg(f)},
 \] 
 it is enough to show that  
  \[
  \sum_{f \in \mathcal{M}} \frac{1}{|f|^{1+\,\widetilde{r}/g\, -\,i\theta/\log q}}= O(g). 
  \]
 Therefore using Lemma \ref{Le9} and the prime polynomial theorem, we obtain
 \begin{align*}
 \log\bigg|\sum_{f \in \mathcal{M}} \frac{1}{|f|^{1+\,\widetilde{r}/g\, -\,i\theta/\log q}}\bigg|=&\Re \sum_{P\in \mathcal{P}} \frac{1}{|P|^{1+\,\widetilde{r}/g\, -\,i\theta/\log q}}\, +\, O(1)= \Re{\sum_{n}\frac{1}{nq^{n(\widetilde{r}/g\, -\,i\theta/\log q)}}} + \, O(1)\\
 =& \Re{\sum_{n\le g}\frac{q^{\frac{in\theta}{\log q}}}{n}}-\Re{\sum_{n\le g}\left(\frac{1}{n}-\frac{1}{nq^{\frac{\widetilde{r}n}{g}}}\right)q^{\frac{in\theta}{\log q}}} + \Re{\sum_{n>g}\frac{q^{\frac{in\theta}{\log q}}}{nq^{\frac{\widetilde{r}n}{g}}}} +O(1)\\
 =&{\sum_{n\le g}\frac{\cos(n\theta)}{n}} +O(1) \le \; \log\left(\min\left\{g,\frac{1}{|\theta|} \right\} \right)  \le \; \log {g},
 \end{align*}
and the lemma's proof is concluded.
\end{proof}

\section{Proof of Theorem \ref{Th1}}
Throughout this section, for  the sake of simplicity, we write $\bm{v}^{(2)}$ and $\bm{k}^{(2)}$ simply as $\bm{v}$ and $\bm{k}$ respectively.
 For any $k_{1}, k_2 \in \mathbb{N}$, we write 
\begin{align}\label{product of l functions}
{\mathcal{L}\Big(\frac{v_{1}}{q^{1/2+\alpha_{1}}},\chi_{D}\Big)}^{k_{1}}{\mathcal{L}\Big(\frac{v_{2}}{q^{1/2+\alpha_{2}}},\chi_{D}\Big)}^{k_{2}}=\sum_{f\in\mathcal{M} }a_f \frac{\chi_{D}(f)}{|f|^{1/2}} ,
\end{align}
where
\begin{align}\label{definition of main coeeficient}
a_f =\sum_{f_1 f_2 =f}\frac{\tau_{k_{1}(f_{1})}\tau_{k_{2}(f_{2})}}{|f_1|^{\alpha_{1}}|f_2|^{\alpha_{2}}}\, e^{i\big(\theta_{1}\,d(f_{1})+ \theta_{2}\,d(f_{2})\big)}.
\end{align} 
We start by defining the following truncated $L$-function which is an analog of Dirichlet polynomials over number fields: 
\begin{align*}
\mathcal{L}_{\le (k_{1}+k_{2})X}\big(\bm{v},\chi_{D}\big):=\sum_{f\in\mathcal{M}_{\leq (k_{1} + k_{2})X}} a_f \frac{\chi_{D}(f)}{|f|^{1/2}} ,
\end{align*}
where $a_f$ is defined by \eqref{definition of main coeeficient} and the parameter $X$ will be chosen later. We call $X$ as point of truncation of \eqref{product of l functions}.

\vspace{1mm}
\noindent
Using Cauchy-Schwarz inequality, we have 
\begin{align*}
\sum_{D\in \mathcal{H}_{n}}\Big|{\mathcal{L}\Big(\frac{v_{1}}{q^{1/2+\alpha_{1}}},\chi_{D}\Big)}^{k_{1}}{\mathcal{L}\Big(\frac{v_{2}}{q^{1/2+\alpha_{2}}},\chi_{D}\Big)}^{k_{2}}\,\overline{\mathcal{L}_{\le (k_{1} + k_{2})X}\big(\bm{v}, \chi_{D}\big)}\Big|\hspace*{5cm} \\
\le \left(\sum_{D\in \mathcal{H}_{n}}\Big|{\mathcal{L}\Big(\frac{v_{1}}{q^{1/2+\alpha_{1}}},\chi_{D}\Big)}^{k_{1}}{\mathcal{L}\Big(\frac{v_{2}}{q^{1/2+\alpha_{2}}},\chi_{D}\Big)}^{k_{2}}\Big|^{2}\right)^{1/2}\left(\sum_{D\in \mathcal{H}_{n}}\Big|\overline{\mathcal{L}_{\le (k_{1}+k_{2})X} \big(\bm{v},\chi_{D}\big)}\Big|^{2}\right)^{1/2}.
\end{align*}

\noindent
Therefore, we obtain
\begin{align}\label{main inequality for lower bound}
\sum_{D\in \mathcal{H}_{n}}\Big|{\mathcal{L}\Big(\frac{v_{1}}{q^{1/2+\alpha_{1}}},\chi_{D}\Big)}^{k_{1}}{\mathcal{L}\Big(\frac{v_{2}}{q^{1/2+\alpha_{2}}},\chi_{D}\Big)}^{k_{2}}\Big|^{2}\ge\; \frac{S_{1}^{2}}{S_{2}},
\end{align}
where \begin{align*}
S_{1}:=\sum_{D\in \mathcal{H}_{n}}\Big|{\mathcal{L}\Big(\frac{v_{1}}{q^{1/2+\alpha_{1}}},\chi_{D}\Big)}^{k_{1}}{\mathcal{L}\Big(\frac{v_{2}}{q^{1/2+\alpha_{2}}},\chi_{D}\Big)}^{k_{2}}\,\overline{\mathcal{L}_{\le (k_{1} + k_{2})X}\big(\bm{v},\chi_{D}\big)}\Big|
\end{align*}
and \begin{align*}
S_{2}:=\sum_{D\in \mathcal{H}_{n}}\Big|\overline{\mathcal{L}_{\le (k_{1} + k_{2})X}\big(\bm{v},\chi_{D}\big)}\Big|^{2}.
\end{align*}
Now we establish an asymptotic formula for $S_2$ and a lower bound for $S_1$.
\subsection{Estimation of the sum $S_2$}
 Inserting the $D$-sum after expanding square in $S_{2}$, we get
\begin{align*}
S_{2}=&\sum_{f \in \mathcal{M}_{\le (k_{1}+k_{2})X}}\sum_{f^{\prime} \in \mathcal{M}_{\le (k_{1}+k_{2})X}} \frac{a_f\,\overline{a_{f^{\prime}}}}{|ff^{\prime}|^{1/2}}\sum_{D\in \mathcal{H}_{n}}\chi_{D}(ff^{\prime}).
\end{align*}

\noindent
\begin{case}
Assume that $ff^{\prime}\neq \square$. Observe that $a_f \ll_{\ve} |f|^{\ve}$ and using Lemma \ref{Le3}, we obtain that
\begin{align*}
S_2\ll \sqrt{|\mathcal{H}_n|}\sum_{f\in \mathcal{M}_{\le 2(k_1+k_2)X}}\frac{1}{|f|^{\frac{1}{2}-\varepsilon}}\ll \sqrt{|\mathcal{H}_n|} \, q^{2\left(\frac{1}{2}+\varepsilon\right)(k_1+k_2)X}.
\end{align*}

\noindent
Let us  choose $X=\frac{g}{2(k_1+k_2)}$ \footnote{\label{Footnote $2$}For the Theorem \ref{Th1.2}, the point of truncation will be $(k_1 +\ldots + k_m)X$ and the choice of $X$ is equal to $\frac{g}{2(k_1 +\ldots + k_m)}$.}. So, we have
\[
S_2\ll q^{\left(\frac{3}{2}+\varepsilon\right)g}.
\]
\end{case}

\begin{case}
Assume that $ff^{\prime} =\square=l^{2}$, where $l\in \mathbb{F}_{q}[t]$. By using Lemma \ref{Le2} and  $\tau_{k}(f) \ll_{\ve} |f|^{\ve}$, 
\begin{align*}
S_{2}=|\mathcal{H}_{n}|\sum_{l\in\mathcal{M}_{\leq (k_{1}+k_{2})X}} \frac{1}{|l|}\sum_{\substack{f_{1}f_{2}f_{3}f_{4}=l^{2}}}\frac{\tau_{k_{1}(f_{1})}\tau_{k_{1}(f_{2})}\tau_{k_{2}(f_{3})}\tau_{k_{2}(f_{4})}}{|f_{1}f_{2}|^{\alpha_{1}}|f_{3}f_{4}|^{\alpha_{2}}}\\
\times\; e^{i\theta_{1}(d(f_{1})-d(f_{2}))+ i\theta_{2}(d(f_{3})-d(f_{4}))} \; \prod_{P\mid l}\left(1+\frac{1}{|P|}\right)^{-1} \\
+\; \; O\left(\sum_{l\in\mathcal{M}_{\leq (k_{1}+k_{2})X}} \frac{1}{|l|}\sum_{\substack{f_{1}f_{2}f_{3}f_{4}=l^{2}}}\frac{\tau_{k_{1}(f_{1})}\tau_{k_{1}(f_{2})}\tau_{k_{2}(f_{3})}\tau_{k_{2}(f_{4})}}{|f_{1}f_{2}|^{\alpha_{1}}|f_{3}f_{4}|^{\alpha_{2}}} \right)\\
= |\mathcal{H}_{n}|\sum_{l\in\mathcal{M}_{\leq (k_{1}+k_{2})X}}\frac{b(l)}{|l|} +\;\; O\left(q^{ \ve (k_{1}+k_{2})X}\right),\hspace{2cm}
\end{align*}
where
\begin{align*}
b(l)=\sum_{\substack{f_{1}f_{2}f_{3}f_{4}=l^{2}}}\frac{\tau_{k_{1}(f_{1})}\tau_{k_{1}(f_{2})}\tau_{k_{2}(f_{3})}\tau_{k_{2}(f_{4})}}{|f_{1}f_{2}|^{\alpha_{1}}|f_{3}f_{4}|^{\alpha_{2}}}
\, e^{i\theta_{1}(d(f_{1})-d(f_{2}))+ i\theta_{2}(d(f_{3})-d(f_{4}))}  \prod_{P\mid l}\left(1+\frac{1}{|P|}\right)^{-1}.
\end{align*}

\vspace{2mm}
\noindent
We use the Perron's formula\footnote{Perron's formula in function fields comes through the Cauchy's integral formula. More precisely $\displaystyle{\sum_{f \in \mathcal{M}_{\le X}}} a_{f}=\frac{1}{2\pi i}\underset{|u|=r}{ \int}\Big(\displaystyle{\sum_{f \in \mathcal{M}}} a_{f} \,u^{\deg(f)}\Big)\frac{du}{u^{X+1}(1-u)}$, provided that the power series $\displaystyle{\sum_{f \in \mathcal{M}}} a_{f}\, u^{\deg(f)}$ is absolutely convergent in $|u|\le r<1$.} to get 
\begin{align*}
\sum_{l\in\mathcal{M}_{\leq (k_{1}+k_{2})X}}\frac{b(l)}{|l|}=\frac{1}{2\pi i}\underset{|u|=r}{\int} B(u)\frac{(qu)^{-(k_{1}+k_{2})X}}{(1-qu)}\frac{du}{u},
\end{align*}
where 
\[
B(u)=\displaystyle\sum_{l\in\mathcal{M}}b(l)u^{d(l)} 
\quad  \text{ and }  \quad r<\frac{1}{q}.
 \]
 
 \noindent
For an irreducible polynomial $P$, we observe that
\begin{align*}
&b(P)=\left(1+\frac{1}{|P|}\right)^{-1}\sum_{\substack{f_{1}f_{2}f_{3}f_{4}=P^{2}}}\frac{\tau_{k_{1}(f_{1})}\tau_{k_{1}(f_{2})}\tau_{k_{2}(f_{3})}\tau_{k_{2}(f_{4})}}{|f_{1}f_{2}|^{\alpha_{1}}|f_{3}f_{4}|^{\alpha_{2}}}\,
 e^{i\theta_{1}(d(f_{1})-d(f_{2}))+ i\theta_{2}(d(f_{3})-d(f_{4}))} \\
 &= \left(1+\frac{1}{|P|}\right)^{-1}\left(\sum_{\substack{j=1\\ \epsilon_{j}\in\{\pm 1\}}}^{2}\frac{k_{j}(k_{j}+1)}{2} \frac{e^{2i\epsilon_{j}\theta_{j}\,d(P)}}{|P|^{2\alpha_{j}}}+ \sum_{j=1}^{2}\frac{k_{j}^{2}}{|P|^{2\alpha_{j}}} + \sum_{\epsilon_{j}\in\{\pm 1\}}\frac{k_{1}k_{2}}{|P|^{\alpha_{1}+\alpha_{2}}} e^{i(\epsilon_{1}\theta_{1}+\epsilon_{2}\theta_{2})d(P)}\right),
\end{align*}
which allows us to write $B(u)$ as
\[
B(u)=\displaystyle\prod_{j=1}^{2}\mathcal{Z}^{k_{j}^{2}}(u)\displaystyle\prod_{\substack{j=1\\ \epsilon_{j}\in\{\pm 1\}}}^{2}\mathcal{Z}^{\frac{k_{j}(k_{j}+1)}{2}}\left(ue^{2i\epsilon_{j}\theta_{j}}\right)
\displaystyle\prod_{\epsilon_{j}\in\{\pm 1\}} \mathcal{Z}^{k_{1}k_{2}} \left(u e^{i(\epsilon_{1}\theta_{1}+\epsilon_{2}\theta_{2})}\right) C(u).
\]
 Here $C(u)$ is absolutely convergent for $|u|<\frac{1}{\sqrt{q}}$. Therefore, 
\begin{align}\label{main integration}
\begin{split}
&\sum_{l\in\mathcal{M}_{\leq (k_{1}+k_{2})X}}\frac{b(l)}{|l|}=\frac{1}{2\pi i}\underset{|u|=r}{\int} B(u)\frac{(qu)^{-(k_{1}+k_{2})X}}{(1-qu)}\frac{du}{u}\\
&=\frac{1}{2\pi i}\underset{|u|=r}{\int}\prod_{j=1}^{2}\mathcal{Z}^{k_{j}^{2}}(u)\prod_{\substack{j=1\\ \epsilon_{j}\in\{\pm 1\}}}^{2}\mathcal{Z}^{\frac{k_{j}(k_{j}+1)}{2}}\left(ue^{2i\epsilon_{j}\theta_{j}}\right)
\prod_{\epsilon_{j}\in\{\pm 1\}} \mathcal{Z}^{k_{1}k_{2}} \left(u e^{i(\epsilon_{1}\theta_{1}+\epsilon_{2}\theta_{2})}\right)\\
& 
\hspace{9cm} \times C(u) \frac{(qu)^{-(k_{1}+k_{2})X}}{(1-qu)}\frac{du}{u},
\end{split}
\end{align}
where $r=\frac{1}{q^{1+\ve}}$.
\subsubsection{Calculating the main term of $S_2$}
To get main term we have to shift the contour of integration \eqref{main integration} over $u$ to a circle of radius $|u|=R=\frac{1}{q^{1/2 \,+ \,\ve}}$. The  integrand has a pole at $u=\frac{1}{q}$ of order $k_{1}^{2} + k_{2}^{2} +1$ and at $u=\frac{1}{q e^{2i\epsilon_{j} \theta_{j}}}$ of order $\frac{k_{j}(k_{j}+1)}{2}$ and at $u=\frac{1}{q\,e^{i(\epsilon_{1}\theta_{1}+\epsilon_{2}\theta_{2})}}$ of order $k_{1}k_{2}$, where  $ \epsilon_{j}\in\{\pm 1\}$ and $j=1,2$.

\vspace{1mm}
\noindent
We define 
 \begin{align*}
D(u)=\prod_{j=1}^{2}\mathcal{Z}^{k_{j}^{2}}(u)\prod_{\substack{j=1\\ \epsilon_{j}\in\{\pm 1\}}}^{2} \mathcal{Z}^{\frac{k_{j}(k_{j}+1)}{2}}\left(ue^{2i\epsilon_{j}\theta_{j}}\right)\prod_{\epsilon_{j}\in\{\pm 1\}} \mathcal{Z}^{k_{1}k_{2}} \left(u e^{i(\epsilon_{1}\theta_{1}+\epsilon_{2}\theta_{2})}\right)
 C(u) \frac{(qu)^{-(k_{1}+k_{2})X}}{u(1-qu)}.
\end{align*}

\noindent 
Using the Cauchy's residue theorem\footnote{Cauchy's residue theorem says that if $\gamma$ is a simple closed, positively oriented contour in the complex plane and $f$ is analytic excepts for some points $z_1 ,\ldots, z_n $ inside $\gamma$, then $\underset{\gamma}{\oint} f(z)\,dz=2\pi i\displaystyle\sum_{k=1}^{n}\underset{z=z_k}{Res} f(z)$. }, we obtain 
\begin{align*}
\frac{1}{2\pi i}\underset{|u|=r}{\int}D(u)\,du&=\frac{1}{2\pi i}\underset{|u|=R}{\int}D(u)\,du -\underset{u=1/q}{\mathrm{Res}}D(u) -\sum_{\substack{j=1\\ \epsilon_{j}\in\{\pm 1\}}}^{2} \underset{u=1/qe^{2i\epsilon_{j}\theta_{j}}}{\mathrm{Res}}D(u)\\
&-\sum_{\epsilon_{j}\in\{\pm 1\}} \underset{u=1/qe^{i(\epsilon_{1}\theta_{1}+\epsilon_{2}\theta_{2})}}{\mathrm{Res}}D(u), \hspace{3cm}
\end{align*}
where $r=\frac{1}{q^{1+\ve}}$ and $R=\frac{1}{q^{1/2\,+\, \ve}}$.

\vspace{2mm}
\noindent
On the circle $|u|=R=\frac{1}{q^{1/2\,+\, \ve}}$, we see that the functions $\frac{1}{1-qu},$ $\frac{1}{1-que^{i(\epsilon_1\theta_1+\epsilon_2 \theta_2)}}$ and $\frac{1}{1-que^{2i\epsilon_j\theta_j}}$ are bounded. This leads 
\begin{align*}
\frac{1}{2\pi i}\underset{|u|=R}{\int}D(u)\,du\ll  q^{-(\frac{1}{2}-\ve)(k_{1}+k_{2})X}.
\end{align*}

\noindent
\subsubsection*{Evaluation of the sum of residues}
We claim that
\begin{align*}
& \underset{u=1/q}{\mathrm{Res}}D(u) +\sum_{\substack{j=1\\ \epsilon_{j}\in\{\pm 1\}}}^{2} \underset{u=1/qe^{2i\epsilon_{j}\theta_{j}}}{\mathrm{Res}}D(u)
 + \sum_{\epsilon_{j}\in\{\pm 1\}} \underset{u=1/qe^{i(\epsilon_{1}\theta_{1}+\epsilon_{2}\theta_{2})}}{\mathrm{Res}}D(u) \hspace{4cm}\\
&\hspace{2mm}\sim_{\bm{k}, \widetilde{c}} g^{k_{1}^{2} + k_{2}^{2}}\prod_{j=1}^{2}\left(\min\Big\{\frac{1}{2|\theta_{j}|}, g \Big\}\right)^{k_j(k_j +1)}\left(\min\Big\{\tfrac{1}{ | \theta_{1}-\theta_{2}|},g \Big\} \right)^{2k_{1}k_{2}} \left(\min\Big\{\tfrac{1}{ | \theta_{1}+\theta_{2}  |},g \Big\} \right)^{2k_{1}k_{2}},
\end{align*}
where
\begin{align}\label{definition of $c$}
\widetilde{c}:= \widetilde{c}_{\bm{v}}+1\; \text{with}\; \widetilde{c}_{\bm{v}}\; \text{is defined as in \eqref{contant of main theorem}}.
\end{align}

\noindent
Let us define the following sets
\begin{align*}
& W_{1}=\{j\in\{1,2\}:\underset{g\to \infty}{\lim} g|\theta_{j}|< \infty\}, \quad {W}^{c}_{1}=\{j\in\{1,2\}:\underset{g\to \infty}{\lim} g|\theta_{j}|= \infty\},\\
&W_{2}=\{(1	,2):\underset{g\to \infty}{\lim} g|\theta_{1}-\theta_{2}|< \infty\}, \quad  {W}^{c}_{2}=\{(1,2):\underset{g\to \infty}{\lim}\, g|\theta_{1}-\theta_{2}|= \infty\},\\
&W_{-2}=\{(1,2):\underset{g\to \infty}{\lim} g|\theta_{1}+\theta_{2}|< \infty\}, \quad {W}^{c}_{-2}=\{(1,2):\underset{g\to \infty}{\lim} g|\theta_{1}+\theta_{2}|= \infty\}.
\end{align*}

\noindent
We call the elements of the sets $W_1$ and $W^{c}_{1}$ as finite and infinite ``single shift" respectively.  We also call the elements of the sets $W_{\epsilon 2}$ and ${W}^{c}_{\epsilon 2} ,\,  \epsilon\in \{1, -1\}$ as finite and infinite ``pair shift" respectively\footnote{\label{Footnote $5$}Note that for two dimensional correlations only one of the sets $W_{\epsilon 2}, {W}^{c}_{\epsilon 2},\, \epsilon\in \{1, -1\}$ contains the ``pair shift" $(1, 2)$ but for higher dimensional correlations either of the sets $W_{\epsilon 2}, {W}^{c}_{\epsilon 2}$ may contain more than one ``pair shift" which are of the form $(j_1, j_2).$}.
\noindent
\subsubsection*{Estimation of finite ``single shift" and ``pair shift"}
Cauchy's residue theorem allows us to write 
\begin{align*}
\underset{u=\frac{1}{q}}{\mathrm{Res}}D(u) +\sum_{\substack{j\in W_{1}\\ \epsilon_{j}\in\{\pm 1\}}} \underset{u=\frac{1}{qe^{2i\epsilon_{j}\theta_{j}}}}{\mathrm{Res}}D(u)
+\sum_{\substack{(1,2)\in W_{\eps 2}\\\eps, \epsilon_{j}\in\{\pm1\}}} \underset{u=\frac{1}{qe^{i(\epsilon_{1}\theta_{1}+\epsilon_{2}\theta_{2}})}}{\mathrm{Res}}D(u)=\underset{\Gamma}{\int}D(u)\,du,
\end{align*}
where $\Gamma$ is a circle centered at $\frac{1}{q}$ of radius $\frac{\widetilde{c}}{g}$ and $\widetilde{c}$ is defined by \eqref{definition of $c$}.
We apply the definition of the sets $W_1,\, W_{1}^{c}$ and $W_{\epsilon 2},\, W_{\epsilon 2}^{c}$ to write
\begin{align*}
D(u)=\left(\frac{1}{1- qu}\right)^{k_{1}^{2} + k_{2}^{2}} \prod_{\substack{j\in W_{1}\\  \epsilon_{j}\in\{\pm 1\}}}\left(\frac{1}{1-que^{2i\epsilon_{j}\theta_{j}}}\right)^{\frac{k_{j}(k_{j}+1)}{2}} \prod_{\substack{(1,2)\in W_{\eps 2}\\\epsilon_{j}\in \{\pm 1\}}} \left(\frac{1}{1-que^{i(\epsilon_{1}\theta_{1}+\epsilon_{2}\theta_{2})}}\right)^{k_1k_{2}}\\
\times \prod_{\substack{j\in {W}^{c}_{1}\\  \epsilon_{j}\in\{\pm 1\}}}\mathcal{Z}^{\frac{k_{j}(k_{j}+1)}{2}} \left(ue^{2i\epsilon_{j}\theta_{j}}\right)  \prod_{\substack{(1,2)\in{W}^{c}_{\eps 2}\\\eps_{j}\in \{\pm 1\}}}\mathcal{Z}^{k_{1}k_{2}} \left(u e^{i(\epsilon_{1}\theta_{1}+\epsilon_{2}\theta_{2})}\right) C(u) \frac{(qu)^{-(k_{1}+k_{2})X}}{u(1-qu)}\\
=\left(\frac{1}{1- qu}\right)^{k_{1}^{2} + k_{2}^{2}+1} \prod_{\substack{j\in W_{1}\\  \epsilon_{j}\in\{\pm 1\}}}\left(\frac{1}{1-que^{2i\epsilon_{j}\theta_{j}}}\right)^{\frac{k_{j}(k_{j}+1)}{2}} \prod_{\substack{(1,2)\in W_{\eps 2}\\\epsilon_{j}\in \{\pm 1\}}} \left(\frac{1}{1-que^{i(\epsilon_{1}\theta_{1}+\epsilon_{2}\theta_{2})}}\right)^{k_1k_{2}}\\
\times (qu)^{-(k_{1}+k_{2})X}\, \widetilde{E}(u),\hspace{3cm}
\end{align*}
where \begin{align*}
\widetilde{E}(u)=\prod_{\substack{j\in {W}^{c}_{1}\\  \epsilon_{j}\in\{\pm 1\}}}\mathcal{Z}^{\frac{k_{j}(k_{j}+1)}{2}} \left(ue^{2i\epsilon_{j}\theta_{j}}\right)  \prod_{\substack{(1,2)\in {W}^{c}_{\eps 2}\\\eps, \epsilon_{1},\epsilon_{2}\in \{\pm 1\}}}\mathcal{Z}^{k_{1}k_{2}} \left(u e^{i(\epsilon_{1}\theta_{1}+\epsilon_{2}\theta_{2})}\right) \frac{C(u)}{u}.
\end{align*}
Note that $\widetilde{E}(u)$ is analytic on and inside the circle $\Gamma$ and it's radius of convergence is $\gg \frac{1}{g}$. Therefore for $|u-\frac{1}{q}|=O(\frac{1}{g})$,
 \begin{align*}
\widetilde{E}(u)=\sum_{n=0}^{\infty} e_n \left(1-qu\right)^{n}.
\end{align*} 
\noindent
 Next we evaluate the integral $\displaystyle\int_{\Gamma}D(u)\,du$ which is equal to 
\begin{align}\label{residu1}
\int_{\Gamma}\frac{1}{(1-qu)^{V+1}}\left(1+\sum_{n=1}^{\infty}\, \frac{b_n}{(1-qu)^{n}} \right)\widetilde{E}(u)\, (qu)^{-(k_{1}+k_{2})X}\,du,
\end{align}
where 
\begin{align*}
&V=k_{1}^{2} + k_{2}^{2}\,+\displaystyle\sum_{j\in W_{1}}k_{j}(k_j +1) + \displaystyle \sum_{\substack{(1,2)\in W_{\eps 2}\\\epsilon\in\{\pm 1\} }}2k_{1}k_{2} \quad \text{ and }\\
&1 + \sum_{n=1}^{\infty} \frac{b_n}{(1-qu)^n}=
\prod_{\substack{j\in W_{1}\\ \epsilon_{j}\in\{\pm 1\}}} \left( 1 +\sum_{n=1}^{\infty} (-1)^{n}  \binom{\frac{k_{j}(k_{j}+1)}{2} + n} {\frac{k_{j}(k_{j}+1)}{2}} \frac{(e^{-2i\epsilon_{j}\theta_{j}} -1)^{n}}{(1-qu)^{n}} \right)\\
& \hspace{3cm}\times \prod_{\substack{(1,2)\in W_{\eps 2}\\\eps,\epsilon_{1},\epsilon_{2}\in \{\pm 1\}}} \left( 1 +\sum_{m =1}^{\infty} (-1)^{m}  \binom{k_1 k_2 + m} {k_1 k_2} \frac{(e^{-i(\epsilon_{1}\theta_{1}+\epsilon_{2}\theta_{2})} -1)^{m}}{(1-qu)^{m}} \right).
\end{align*}
\noindent
 For $n\ge 0$, we deduce that 
\begin{align*}
\int_{\Gamma}\frac{1}{(1-qu)^{V+1}}\frac{b_n}{(1-qu)^{n}}\widetilde{E}(u) (qu)^{-(k_{1}+k_{2})X}\,du\hspace{5cm}\\
=\, e_{0}b_{n}\frac{F_{V+n}((k_{1}+k_{2})X)}{(V+n)!}\,+\sum_{l=1}^{V+n}e_{l}b_{n}\frac{F_{V+n-l}((k_{1}+k_{2})X)}{(V+n-l)!},
\end{align*}
where $F_n (x)=x(x+1)(x+2)\ldots (x+n-1)$, for $n\ge 1$ and $F_{0}(x)=1$.

\vspace{1mm}
\noindent
From the choice of $X=\frac{g}{2(k_1 +k_2)}$, right hand side of the above equation becomes 
\begin{align}\label{deduction of d_n}
\frac{e_{0}\,b_{n}}{(V+n)!} \Big(\frac{g}{2}\Big)^{V+n} +  \sum_{l=1}^{V+n} \frac{e_{l}\, b_{n}\, d_{V+n-l}}{(V+n-l)!}\Big(\frac{g}{2}\Big)^{V+n-l}, 
\end{align}
where $$d_l =1+\frac{l!}{e_{V+n-l}}\left(\frac{s^{(l)}_{l-1} e_{V+n-(l+1)}}{(l+1)!} +\frac{s^{(l+1)}_{l-1} e_{V+n-(l+2)}}{(l+2)!} +\ldots
+\frac{s^{(V+n-2)}_{l-1} e_1}{(V+n-1)!} +\frac{s^{(V+n-1)}_{l-1} e_0}{(V+n)!} \right) $$ with $$ s^{(k)}_{k-i}=\sum_{1\le l_1 <\ldots<l_i\le k} l_1 \ldots l_i\, ,\;\; i=1,2,\ldots, k.$$
 The coefficients $s^{(k)}_{k-i}$ are called the Stirling numbers of first kind and $s^{(k)}_{k-i}\le (k+1)!$ (see \cite{GHP},\, equation (6.9)). For more details about $d_l$ see Appendix \ref{calculation of dn}.
 Therefore, the integral \eqref{residu1} is equal to 
\begin{equation}\label{eq19}
\begin{multlined}
 \frac{e_0 g^{V}}{2^{V}}\sum_{n=0}^{\infty}\frac{b_{n}\, g^{n}}{2^{n} (V+n)!} + \sum_{l=1}^{V}\frac{e_{l}\, g^{V-l}}{2^{V-l}}\sum_{n=0}^{\infty}\frac{b_{n}\, d_{V+n-l}\, g^{n}}{2^{n} (V+n-l)!}
\\
 + \sum_{l=1}^{\infty} e_{V+l} \sum_{n=0}^{\infty}\frac{b_{n+l}\, d_{n}\, g^{n}}{2^{n} n!}.
\end{multlined}
\end{equation}
We claim that the main contribution comes from only the first term of the above expression. To prove this, we have to find an upper bound for the coefficients $b_n$, $e_l$ and $d_n$. 

\noindent
Let us denote
\begin{align*}
M:=\max_{\substack{j\in W_{1}\\ (1,2)\in W_{\eps 2}\\\eps\in \{\pm 1\}}}\Bigg\{\frac{k_{j}(k_{j}+1)}{2}, k_1 k_2\Bigg\}, \quad  \beta:=\max_{\substack{j\in W_{1}\\ (1,2)\in W_{\eps 2}\\\eps\in \{\pm 1\}}}\Bigg\{|1-e^{2i\theta_{j}}|,|1-e^{i(\theta_{1}\pm \theta_{2})}|\Bigg\},
\end{align*}
and  $2w:=\displaystyle\max_{\substack{j}}\, \{\,|W_j|\,\}$.\,
We can write $b_n$ as  
 \begin{align*}
b_{n} =(-1)^{n}\sum_{\substack{\sum n_j +m_{12} =n\\ n_j,m_{12}\ge 0\\j\in W_1, (1,2)\in W_{\eps 2}\\\epsilon\in\{\pm 1\}}}\prod_{\substack{j\in W_{1}\\ \epsilon_{j}\in\{\pm 1\}}} \binom{\frac{k_{j}(k_{j}+1)}{2} + n_j} {\frac{k_{j}(k_{j}+1)}{2}} (e^{-2i\epsilon_{j}\theta_{j}} -1)^{n_j}\\
\times \prod_{\substack{(1,2)\in W_{\eps 2}\\ \epsilon_{j}\in \{\pm 1\}}}\binom{k_1 k_2 + m_{12}} {k_1 k_2} (e^{-i(\epsilon_{1}\theta_{1}+\epsilon_{2}\theta_{2})} -1)^{m_{12}}.
\end{align*}

\noindent
Note that the number of terms such that $\sum n_j +m_{12} =n$ with $ n_j,m_{12}\ge 0$ and $j\in W_1,\, (1,2)\in W_{\eps 2}$ is $\binom{w+n-1}{w-1}$. Therefore, for large $g$ and $n\ge 1$, we obtain 
\begin{align}\label{eq20}
|b_n|\le \binom{w+n-1}{w-1}{\binom{M+n}{M}}^{w} {\beta}^{n}\le a_0 n^t {\beta}^{n},
\end{align}
where $a_0$, $t$ are constants depend on $w$ and $M$.

\noindent
Let $r$ be the radius of convergence of $\widetilde{E}(u)$. Note that $\frac{1}{g} =o(r)$. Hence $\displaystyle\lim_{n\to \infty} \frac{e_{n+1}}{e_n}=\frac{1}{r}=o(g)$, and this gives
\begin{align}\label{eq21}
|e_n|\le e_0\,a_1 \,\left(\frac{2}{r}\right)^n .
\end{align}
where $a_1\in \mathbb{R}$ depends on $\widetilde{E}$.

\noindent
Note that, 
\begin{align*}
\frac{e_l d_{V+n-l}}{(V+n-l)!}= \frac{e_l}{(V+n-l)!} + \left(\frac{s^{(V+n-l)}_{V+n-l-1} e_{l-1}}{(V+n-l+1)!}+ \frac{s^{(V+n-l+1)}_{V+n-l-1} e_{l-2}}{(V+n-l+2)!}+\ldots+\right.\\
\left.\frac{s^{(V+n-2)}_{V+n-l-1} e_{1}}{(V+n-1)!}+ \frac{s^{(V+n-1)}_{V+n-l-1} e_{0}}{(V+n)!}\right),
\end{align*}
which implies together with \eqref{eq21}, $$\frac{e_l d_{V+n-l}}{(V+n-l)!}\le e_l + e_{l-1} +\ldots + e_1 + e_0\le e_0 a_1 \sum_{k=1}^{l}\Big(\frac{2}{r}\Big)^{k}\ll l\Big(\frac{2}{r}\Big)^{l} .$$

\noindent
By using the above bounds, the fact that $b_0 =1$ and $\frac{1}{r}=o(g)$, the second sum of \eqref{eq19} is bounded by 
\begin{align*}
&\ll_{\bm{k}} \sum_{l=1}^{V}l\left(\frac{2}{r}\right)^{l} \Big(\frac{g}{2}\Big)^{V-l}\left(1+ \sum_{n=1}^{\infty}\frac{ n^{t}(g \beta)^{n}}{2^{n}} \right)\\
&=o\left({g}^{V} 
\prod_{j\in {W}^{c}_{1}}\frac{1}{|\theta_{j}|^{k_{j}(k_{j}+1)}} \prod_{\substack{(1,2)\in{W}^{c}_{\eps 2}\\ \epsilon\in \{\pm 1\}}}\frac{1}{|\theta_{1}+\epsilon\theta_{2}|^{2k_{1}k_{2}}}\right).
\end{align*}
Since $|g \beta |< \widetilde{c}<1 $ as $g\to \infty$, the inside $n$-sum in the above expression is $O(1)$. For any $l\ge1$, using \eqref{eq20}, 
 we get
\begin{align*}
\sum_{n=0}^{\infty}\frac{b_{n+l} d_{n}g^{n}}{2^{n}n!}\ll_{\bm{k}}  l^{t}\beta^{l}\sum_{n=0}^{\infty}\frac{(n+l)^{t}}{l^{t}} \frac{(g \beta )^{n}}{2^{n}}\ll_{\bm{k}} l^{t}\beta^{l}\sum_{n=0}^{\infty}\frac{(n+1)^{t}( g \beta )^{n}}{2^n}\ll_{\bm{k}} l^{t}\beta^{l}.
\end{align*} 

\noindent 
From the fact $\beta/r=o(1)$ and \eqref{eq21}, the third sum of the equation \eqref{eq19} is bounded above by \begin{align*}
&\ll_{\bm{k}} \frac{1}{r^{V}}\sum_{l=1}^{\infty}\left(\frac{2\beta}{r}\right)^{l} l^{t}   \\
&= o\left(g^V \prod_{j\in {W}^{c}_{1}}\frac{1}{|\theta_{j}|^{k_{j}(k_{j}+1)}} \prod_{\substack{(1,2)\in{W}^{c}_{\eps 2}\\ \epsilon\in \{\pm 1\}}}\frac{1}{|\theta_{1}+\epsilon\theta_{2}|^{2k_{1}k_{2}}}  
\right).
\end{align*}

\noindent
Finally, we consider the first sum of the equation \eqref{eq19}. Using the bound of $b_n$ (see \eqref{eq20}), we note that
\begin{align*}
1\ll \sum_{n=0}^{\infty}\frac{b_{n}g^{n}}{2^{n}(V+n)!}\ll \frac{1}{V!}+ \sum_{n=1}^{\infty}\frac{n^t (\beta\,g)^{n}}{2^{n}(V+n)!}=O(1)
\end{align*}
where the implied constant depends on $\widetilde{c}$ and $\bm{k}  \, (i.e., V)$. Therefore, we conclude that
\begin{align*}
&\int_{\Gamma}D(u)\,du\sim_{\bm{k}, \widetilde{c}} g^V \prod_{j\in {W}^{c}_{1}}\frac{1}{|\theta_{j}|^{k_{j}(k_{j}+1)}} \prod_{\substack{(1,2)\in{W}^{c}_{\eps 2}\\ \epsilon\in \{\pm 1\}}}\frac{1}{|\theta_{1}+\epsilon\theta_{2}|^{2k_{1}k_{2}}}   \\
&\sim_{\bm{k},\widetilde{c}} g^{k_{1}^{2} + k_{2}^{2}} \prod_{j=1}^{2}\left(\min \Big\{ \frac{1}{|2\theta_{j}|},g \Big\}\right)^{k_j(k_j +1)}
\left(\min\Big\{\frac{1}{|\theta_{1}-\theta_{2}|},g\Big\}\right)^{2k_1 k_2 }\left(\min\Big\{\frac{1}{|\theta_{1}+\theta_{2}|},g\Big\}\right)^{2k_1 k_2},
\end{align*}
as required.
\subsubsection*{Evaluation of infinite ``single shift'' and ``pair shift"}
We claim that 
\begin{align}\label{infinite pairs}
\sum_{\substack{j\in W_{1}\\ \epsilon_{j}\in\{\pm 1\}}} \underset{u=\frac{1}{qe^{2i\epsilon_{j}\theta_{j}}}}{\mathrm{Res}}D(u)
+ \sum_{\substack{(1,2)\in W_{\eps 2}\\\eps, \epsilon_j\in \{\pm 1\}}} \underset{u=\frac{1}{qe^{i(\epsilon_{1}\theta_{1}+\epsilon_{2}\theta_{2}})}}{\mathrm{Res}}D(u)\hspace{5.5cm}\\
=o\left(g^{k_{1}^{2}\,+\,k_{2}^{2}} \prod_{j=1}^{2}\left(\min\Big\{\frac{1}{|2\theta_{j}|},g\Big\}\right)^{k_j(k_j +1)}
\left(\min\Big\{\frac{1}{|\theta_{1}-\theta_{2}|},g\Big\}\right)^{2k_1 k_2 }\left(\min\Big\{\frac{1}{|\theta_{1}+\theta_{2}|},g\Big\}\right)^{2k_1 k_2 }\right) \nonumber.
\end{align}
 For the sake of simplicity, we will provide all the details of the proof of the claim \eqref{infinite pairs} in the Appendix section.
 \end{case}
\subsection{Estimation of the sum $S_1$}
We define
\[ 
\widetilde{\mathcal{L}}(u,\chi_D):={\mathcal{L}\Big(\frac{v_{1}}{q^{1/2+\alpha_{1}}},\chi_{D}\Big)}^{k_{1}}{\mathcal{L}\Big(\frac{v_{2}}{q^{1/2+\alpha_{2}}},\chi_{D}\Big)}^{k_{2}}=\sum_{f \in \mathcal{M}} a_f \chi_{D}(f)\Big(\frac{u}{\sqrt{q}}\Big)^{d(f)},
 \] 
where $a_f$ is defined by \eqref{definition of main coeeficient}. We begin with the integral 
\begin{align*}
I=\frac{1}{2\pi i}\underset{|u|=r}{\oint} \widetilde{\mathcal{L}}(u,\chi_D) \frac{u^{-(k_{1}+k_{2})X}}{(1-u)} \frac{du}{u}, \quad  r<1.
\end{align*}
\noindent
Integrating term by term to get 
\begin{align*}
I=\sum_{f \in \mathcal{M}_{\le (k_{1}+k_{2})X}} a_f \frac{\chi_{D}(f)} {|f|^{1/2}}.
\end{align*}
On the other hand we move the contour of integration to $|u|=q^{y}$, encountering a simple pole at $u=1$,  $y>\frac{1}{2}$. In doing so, we obtain
\begin{align*}
I= \widetilde{\mathcal{L}}(1,\chi_D)+ \frac{1}{2\pi i}\underset{|u|=q^{y}}{\oint} \widetilde{\mathcal{L}}(u,\chi_D) \frac{u^{-(k_{1}+k_{2})X}}{(1-u)}\frac{du}{u}.
\end{align*}

\noindent We use the Lindel\"of bound $\widetilde{\mathcal{L}}(u,\chi_D)\ll q^{\ve n}$ [\cite{AS}, Theorem $3.3$]\footnote{One can use the Theorem \ref{Th2} to get the better bound for the integral \eqref{in1}, but for our case Lindel\"of bound is enough. } to obtain
 \begin{align}\label{in1}
\frac{1}{2\pi i}\underset{|u|=q^{y}}{\oint} \widetilde{\mathcal{L}}(u,\chi_D) \frac{u^{-(k_{1}+k_{2})X}}{(1-u)}\frac{du}{u}\ll \frac{q^{\ve n}}{q^{\left((k_{1}+k_{2})X+1\right)y}}.
\end{align}
It follows that 
\begin{align}\label{appprox functional equation}
\widetilde{\mathcal{L}}(1,\chi_D)=\sum_{f \in \mathcal{M}_{\le (k_{1}+k_{2})X}} a_f \frac{\chi_{D}(f)} {|f|^{1/2}} + O_\ve\left(\frac{q^{\ve n}}{q^{\left((k_{1}+k_{2})X+1\right)y}}\right).
\end{align}

\noindent
From the approximation \eqref{appprox functional equation},
 \begin{align*}
S_1&\gg \Bigg|\sum_{D\in \mathcal{H}_{n}}{\mathcal{L}\Big(\frac{v_{1}}{q^{1/2+\alpha_{1}}},\chi_{D}\Big)}^{k_{1}}{\mathcal{L}\Big(\frac{v_{2}}{q^{1/2+\alpha_{2}}},\chi_{D}\Big)}^{k_{2}}\overline{\mathcal{L}_{\le X}\big(\bm{v},\chi_{D}\big)}\Bigg|\hspace{5.5cm}\\
&=\Bigg|\sum_{f \in \mathcal{M}_{\le (k_{1}+k_{2})X}}\sum_{f^{\prime} \in \mathcal{M}_{\le (k_{1}+k_{2})X}}\frac{a_f\,\overline{a_{f^{\prime}}}}{|ff^{\prime}|^{1/2}}\sum_{D\in \mathcal{H}_{n}}\chi_{D}(ff^{\prime})\Bigg|\, + \,O_\ve\left(q^{n\ve} |\mathcal{H}_{n}|\frac{q^{(1/2 + \,\ve)(k_{1}+k_{2})X}}{q^{\left((k_{1}+k_{2})X+1\right)y}} \right)\\
& =|S_2|+ O_\ve\left(q^{n\ve }\, |\mathcal{H}_{n}|\frac{q^{(1/2\, + \ve)(k_{1}+k_{2})X}}{q^{\left((k_{1}+k_{2})X+1\right)y}} \right).
\end{align*}
We choose $X=\frac{g}{2(k_1 + k_2)}$ and $y=\frac{2}{3}$. Hence the estimate of $S_2$ gives us
\begin{align*}
 S_1 \gg_{\bm{k}}\; |\mathcal{H}_{n}|\; g^{k_{1}^{2} + k_{2}^{2}}\, \prod_{j=1}^{2}\left(\min\Big\{\frac{1}{|2\theta_{j}|},g\Big\}\right)^{k_j(k_j +1)}
\left(\min\Big\{\frac{1}{|\theta_{1}-\theta_{2}|},g\Big\}\right)^{2k_1 k_2 }\\
\times \left(\min\Big\{\frac{1}{|\theta_{1}+\theta_{2}|},g\Big\}\right)^{2k_1 k_2 }
+ O\left(|\mathcal{H}_{n}| q^{-\frac{g}{12}\, +\,\ve n} \right).\hspace{0.5cm}
\end{align*}
Inserting the estimates of $S_1$ and $S_2$ in \eqref{main inequality for lower bound} finishes the proof of Theorem \ref{Th1}.
\section{Proof of Theorem \ref{Th2}}
\noindent
To keep things simple we use the notation $\bm{v}$ instead of  $\bm{v}^{(m)}$.
The proof of the Theorem \ref{Th2} will rely on getting an upper bound of the set  $$\Upsilon_{n}(\bm{v},V)=\#  \left\{D\in\mathcal{H}_{n}: \displaystyle\sum_{j=1}^{m}2k_j \log\Big|\mathcal{L}\Big(\frac{v_{j}}{q^{\frac{1}{2}+\alpha_{j}}},\chi_{D}\Big)\Big|\geq \mu(\bm{v},g) +V  \right\},$$
for sufficiently large $n$ and for all $V>2$, where $\mu(\bm{v},g)$ is defined by \eqref{mu}.
Recall that $$2g=n-1-\lambda,$$ where $g$ and $\lambda$ are defined by \eqref{delta} and \eqref{definition of lambda} respectively. We can write
  \begin{align}\label{th1.5e1}
     \sum_{D\in \mathcal{H}_{n}} \bigg|\mathcal{L}\Big(\frac{v_{1}}{q^{\frac{1}{2}+\alpha_{1}}},\chi_{D}\Big)\bigg|^{2k_1} \ldots \bigg|\mathcal{L}\Big(\frac{v_{m}}{q^{\frac{1}{2}+\alpha_{m}}},\chi_{D}\Big) \bigg|^{2k_m}=\displaystyle\int_{-\infty}^{\infty} \Upsilon_{n}(\bm{v},V) \exp{\big(\mu(\bm{v},g) +V\big)} dV.
  \end{align}
  
  \noindent We will estimate an upper bound of $\Upsilon_{n}(\bm{v},V)$ for different ranges of $V$. The Lemma \ref{Le8} lead us  
\begin{align*}
\sum_{j=1}^{m} 2k_j \log\Big|\mathcal{L}\Big(\frac{v_{j}}{q^{\frac{1}{2}+\alpha_{j}}},\chi_{D}\Big)\Big| \leq \frac{4g}{N+1}\sum_{j=1}^{m} k_j \log\left(\frac{1+ q^{-\alpha_{j}(N+1)}}{1+ q^{-2(N+1)}}\right)\hspace{5cm}\\ 
+ 2\Re\sum_{d(f)\leq N} \sum_{j=1}^{m} k_j \frac{a_{\alpha_{j}}\left(d(f)\right) \chi_{D}(f) \Lambda(f) {v_{j}}^{d(f)}} {|f|^{\frac{1}{2}}} + O\left(\sum_{j=1}^{m}k_j \right)\\
 \;\;\le \frac{4gK}{N+1}\log{2} + 2\Re\sum_{d(f)\leq N}\sum_{j=1}^{m}k_j \frac{a_{\alpha_{j}}\left(d(f)\right)  \chi_{D}(f)\Lambda(f){v_{j}}^{d(f)}}{|f|^{\frac{1}{2}}} + O(K),
\end{align*} 
where 
\[ K=\displaystyle\sum_{j=1}^{m}k_j\quad \text{and}\quad a_{\alpha_{j}}\left(d(f)\right)=\frac{1}{d(f)|f|^{\alpha_{j}}} -\frac{1}{d(f)|f|^{2}}+ O\left(\frac{1}{(N+1)q^{(N+1)\alpha_{j}}}\right).
\]

\noindent
Applying the prime polynomial theorem, the contribution from square polynomials $f=P^{2}$ to the second term of the right hand side of the above inequality is 
\begin{align*}
& 2\Re \sum_{d(P)\leq \frac{N}{2}}\sum_{j=1}^{m}k_j \frac{a_{\alpha_{j}}\left(2d(P)\right) \chi_{D}(P) d(P) {v_{j}}^{2d(P)}} {|P|} +O(\log\log n)\\
&\leq \mu(\bm{v},g) + \frac{2g K}{N+1}+ O(\log\log n),
\end{align*}
\noindent
where the error term $O(\log\log n)$ comes from the sum over $P$ such that $P| D$. Also it is easy to verify that the contribution from $f=P^r$ with $r\ge 3$ is $O(1)$.
Therefore, we deduce that 
\begin{align*}
\sum_{j=1}^{m}2k_j \log\Big|\mathcal{L}\Big(\frac{v_{j}}{q^{\frac{1}{2}+\alpha_{j}}},\chi_{D}\Big)\Big|\leq S_{1}(D)+S_{2}(D) +\mu(\bm{v},g)+ \frac{5gK}{N+1} +O(\log\log n),
\end{align*} 
\noindent
where 
\begin{align*}
&S_{1}(D)=2\displaystyle\sum_{d(P)\leq N_{0}}\frac{\chi_{D}(P)} {|P|^{1/2}} \displaystyle\sum_{j=1}^{m}k_j a_{\alpha_{j}}\left(d(P)\right)d(P) \cos(\theta_j d(P)) ,\\
 &S_{2}(D)=2 \displaystyle\sum_{N_{0}< d(P)\leq N} \frac{\chi_{D}(P)} {|P|^{1/2}} \displaystyle\sum_{j=1}^{m}k_j a_{\alpha_{j}}\left(d(P)\right)d(P) \cos(\theta_j d(P)). 
\end{align*}

\noindent
We rewrite $\sigma(\bm{v},g)$ as
\[
\sigma(\bm{v},g)=2\left(\sum_{j=1}^{m}k_j^2\right)\log{g} +  2\sum_{j=1}^{m}k_j^2 F_j 
+4\displaystyle\sum_{i<j}k_i k_j F_{i, j},
\] where 
\[
F_j= \log\left(\min\Big\{\frac{1}{2|\theta_{j}|}, g \Big\} \right) \, \text{ and }\, 
F_{i,j}= \log\left(\min\Big\{\tfrac{1}{ | \theta_{i}-\theta_{j}|}, g \Big\} \right)+ \log\left(\min\Big\{\tfrac{1}{ | \theta_{i}+\theta_{j}|}, g \Big\} \right).
\]

\noindent 
From now onward, for the sake of simplicity we write $\sigma(\bm{v},g)$ simply as $\bm{\sigma}$. 
We consider various different range of $V$. The range $-\infty< V\leq \sqrt{\log{g}}$ yields
\begin{align*}
\int_{-\infty}^{\infty}\Upsilon_{n}(\bm{v},V) \exp{\big(\mu(\bm{v}, g) +V\big)} dV \ll |\mathcal{H}_n| \exp{\big(\sqrt{\log{g}} + \mu(\bm{v}, g\big)}\ll|\mathcal{H}_{n}| g^{o(1)} \exp{(\mu(\bm{v},g))}. 
\end{align*}

\noindent
Applying Lemma \ref{Le1}, it is enough to assume that $\sqrt{\log{g}}\leq V\leq \frac{K g}{\log_{q} g}$. We define the quantity $A$ by 

\begin{align*}
A=\left\{
\begin{array}
[c]{ll}
\frac{\log \bm{\sigma}}{2}, &\, \text{if\,} \sqrt{\log{g}}\leq V\leq \bm{\sigma},
\vspace{1mm}
\\
\frac{\bm{\sigma}\log \bm{\sigma}}{2V}, &\, \text{if\,} \bm{\sigma}\leq V\leq \frac{\bm{\sigma}\log \bm{\sigma}}{25K},
\vspace{1mm}
\\
7K, &\, \text{if\,} V> \frac{\bm{\sigma}\log \bm{\sigma}}{25K}.
\end{array}
\right.
\end{align*}

\noindent
Let us consider 
\[
\frac{g}{N+1}=\frac{V}{A} \quad \text{ and } \quad N_{0}=\frac{N}{\log_{q}g}.
\]
\noindent
 Notice that, if $D\in \Upsilon_{n}(\bm{v},V)$ then we must have either  \[
 S_{1}(D)\geq V(1-\frac{6K}{A}):=V_{1} \quad \text{ or } \quad S_{2}(D)\geq \frac{KV}{A}:=V_{2}.
 \]
 To determine an upper bound of $\Upsilon_{n}(\bm{v},V)$, we will actually examine the set
 \[
 \Upsilon_{n}(\bm{v},V_{i})=\# \left\{ D\in \mathcal{H}_{n}: S_{i}(D) \geq V_{i} \right\},
 \]for $i=1, 2$.
 \noindent
  \noindent
   We set $a_{j}(P):=a_{\alpha_{j}}\left(d(P)\right)  d(P) \cos(\theta_j d(P))$. So
    \[
    a_{j}(P)=\frac{\cos(\theta_j d(P))}{|P|^{\alpha_{j}}} -\frac{ \cos(\theta_j d(P))}{|P|^{2}} + O\left(\frac{d(P)}{(N+1)q^{(N+1)\alpha_{j}}}\right)\ll 1.
    \]
   
    \noindent
Using Lemma \ref{Le10}, we obtain
\begin{align*}
\sum_{D\in \mathcal{H}_{n}}|S_{2}(D)|^{2l} &\ll |\mathcal{H}_{n}| \frac{(2l)!}{l!\, 2^{l}}\Bigg(\sum_{N_{0}<d(P)\leq N}\frac{(\sum_{j=1}^{m}2k_{j})^{2}}{|P|} \Bigg)^{l}\\
&\ll |\mathcal{H}_{n}| \frac{(2l)!}{l!  2^{l}}\left(4K^{2}\left( \log\log_{q}g +  O(1)\right)  \right)^{l},
\end{align*} for any $l$ such that $2lN\leq n$, which implies that $l\leq \frac{g}{N}+\frac{1}{2N}\leq \frac{2V}{A}.$

\vspace{1mm}
\noindent
Therefore, by using Markov's inequality and Stirling's formula, it follows that
\begin{align*}
\Upsilon_{n}(\bm{v},V_{2})&\leq {V_{2}}^{-2l}\Big(\sum_{D\in \mathcal{H}_{n}}|S_{2}(D)|^{2l} \Big) \\
&\ll |\mathcal{H}_{n}| \Big(\frac{A}{KV}\Big)^{2l}\frac{(2l)!}{l! 2^{l}}\left(4 K^{2}\left( \log\log_{q}g + O(1)\right)\right)^{l}\\
&\ll |\mathcal{H}_{n}| \exp(-\frac{V}{2A}\log V).
\end{align*}
Again applying Lemma \ref{Le10} and Stirling's formula, we get 
\begin{align*}
\sum_{D\in \mathcal{H}_{n}}\big|S_{1}(D)\big|^{2l}\ll |\mathcal{H}_{n}|\frac{(2l)!}{l! 2^{l}}\Bigg(\sum_{d(P)\leq N_{0}}\frac{1}{|P|}\bigg(\sum_{j=1}^{m}\frac{2k_j \cos(\theta_j d(P))}{|P|^{\alpha_{j}}}\bigg)^2 \Bigg)^{l}\hspace{5cm}\\
\ll |\mathcal{H}_{n}| \frac{(2l)!}{l! 2^{l}}\Bigg(\sum_{d(P)\leq N_{0}}4\bigg(\sum_{j=1}^{m}\frac{{k_j}^{2} \cos^2(\theta_j d(P))}{|P|^{1+2\alpha_j}}+2\sum_{i<j}k_i k_j \frac{\cos(\theta_i d(P))\cos(\theta_j d(P))}{|P|^{1+ (\alpha_i + \alpha_j)}}\bigg) \Bigg)^{l}\\
\ll |\mathcal{H}_{n}| \left(\frac{l\bm{\sigma}}{e} \right)^{l},\hspace{12.4cm}
\end{align*} for any $l$ such that $2lN_0\le n$, which implies that $l\le \frac{V}{A}\log_{q} g$.
Markov's inequality gives us 
\begin{align*}
\Upsilon_{n}(\bm{v},V_{1})\ll {V_{1}}^{-2l}\left(\sum_{D\in \mathcal{H}_{n}}|S_{1}(D)|^{2l} \right) \ll |\mathcal{H}_{n}| \left(\frac{l\bm{\sigma}}{e {V_{1}}^{2}} \right)^{l}.
\end{align*}
\noindent
It is now convenient to consider the case when $V\le \frac{\bm{\sigma}^{2}}{K^{3}}$  and the case $V> \frac{\bm{\sigma}^{2}}{K^{3}}$ separately.

\noindent
${Case\, 1.}\;$ Assume that $V\leq \frac{\bm{\sigma}^{2}}{K^{3}}$. We choose
$l=\lfloor\frac{{V_{1}}^{2}}{\bm{\sigma}}\rfloor$. The definition of $A$ and this choice of $l$ implies that $l\leq \frac{V}{A}\log_{q}g$. In this case, we find that
\begin{align*}
\Upsilon_{n}(\bm{v},V_{1})\ll |\mathcal{H}_{n}|\, \exp\left(l\log\left(\frac{l\bm{\sigma}}{e {V_{1}}^{2}} \right)  \right) \ll |\mathcal{H}_{n}| \, \exp\left(-\frac{{V_{1}}^{2}}{\bm{\sigma}} \right). 
\end{align*}
\noindent
${Case\, 2.}\;$ Assume that $V> \frac{\bm{\sigma}^{2}}{K^{3}}$. We choose $l=\lfloor10V\rfloor$. Again from the definition of $A$, it is easy to see that this choice $l$ satisfies $l\leq \frac{V}{A}\log_{q}g$.
Notice that $V> \frac{\bm{\sigma}^{2}}{K^{3}}$, implies $\log{V}>2\log{\bm{\sigma}}-3\log{K}$. So, we have
\[
A=K \quad  \text{ and } \quad  {V_{1}}^{2}=25 {V}^{2}.
\] 
Hence, we conclude that 
\begin{align*}
\Upsilon_{n}(\bm{v},V_{1})&\ll |\mathcal{H}_{n}|  \exp\left(10V\log\left(\frac{10V\bm{\sigma}}{e {V_{1}}^{2}} \right)  \right)\\
&\ll |\mathcal{H}_{n}| \exp\left(-4V\log{V} \right),
\end{align*} for sufficiently large $g$.

\noindent
Therefore combining the above estimates, we deduce that 
\begin{align}\label{Upesp1}
\Upsilon_{n}(\bm{v},V)\ll |\mathcal{H}_{n}|\left\{  \exp\left(-\tfrac{V}{2A}\log V\right) +  \exp\left(-\tfrac{{V_{1}}^{2}}{\bm{\sigma}} \right) + \exp\left(-4V\log{V}\right)\right\}.
\end{align}
\noindent
We extract the value of $V_1$ for various range of $V$ comes from the definition of $A$.

\noindent
If $\sqrt{\log{g}}\leq V\leq \bm{\sigma}$, then
\[
A=\frac{1}{2}\log \bm{\sigma}\quad\text{and}\quad V_{1}=V\Big(1-\frac{12K}{\log \bm{\sigma}}\Big).
\] 
So, for sufficiently large $g$, \eqref{Upesp1} implies that 
\begin{align*}
\Upsilon_{n}(\bm{v},V)&\ll |\mathcal{H}_{n}| \exp\left(-\tfrac{V^{2}}{\bm{\sigma}}\left(1-\tfrac{12K}{\log \bm{\sigma}} \right)^{2}  \right)\\
&\ll  |\mathcal{H}_{n}| \exp\left(-\tfrac{V^{2}}{\bm{\sigma}}\left(1-\tfrac{24K}{\log \bm{\sigma}} \right)  \right).
\end{align*}

\noindent
If  $\bm{\sigma}\leq V\leq \frac{1}{25K}\bm{\sigma}\log \bm{\sigma}$, then 
\[
A=\frac{\bm{\sigma}\log \bm{\sigma}}{2V} \quad \text{ and } \quad V_{1}=V\Big(1-\tfrac{12KV}{\bm{\sigma}\log \bm{\sigma}}\Big).
\]

\noindent
 For this range of $V$, $\frac{\log{V}}{\bm{\sigma}\log \bm{\sigma}}>\frac{1}{\bm{\sigma}}$ and hence from \eqref{Upesp1} we obtain
 \begin{align*}
\Upsilon_{n}(\bm{v},V)&\ll |\mathcal{H}_{n}|\bigg\{\exp\left(-\frac{V^{2}\log{V}} {\bm{\sigma}\log \bm{\sigma}}\right)   + \exp\left(-4V\log{V}\right) \bigg. \\
&\bigg. \hspace{3cm} + \exp\left(-\tfrac{V^{2}}{\bm{\sigma}}\left(1-\frac{12KV}{\bm{\sigma}\log \bm{\sigma}}\right)^{2}\right)\bigg\}\\
&\ll |\mathcal{H}_{n}| \exp\left(-\frac{V^{2}}{\bm{\sigma}}\left(1-\frac{24KV}{\bm{\sigma}\log \bm{\sigma}} \right)\right). 
\end{align*}

\noindent
Finally, if $V> \frac{1}{25K}\bm{\sigma}\log \bm{\sigma}$, then 
\[
A=7K \quad \text{ and } \quad V_{1}=\frac{V}{7}.
\]
So from \eqref{Upesp1}, we get that
\begin{align*}
\Upsilon_{n}(\bm{v},V)\ll |\mathcal{H}_{n}|\exp\left(-\tfrac{V}{98K}\log V \right). 
\end{align*} 
\noindent 
Adding these estimates in \eqref{Upesp1} for different range of $V$, we conclude that
\begin{align}\label{Upesp2}
\Upsilon_{n}(\bm{v},V)\ll \left\{
\begin{array}
[c]{ll}
|\mathcal{H}_{n}|n^{\ve}\,\exp\left(-\frac{V^{2}}{\bm{\sigma}} \right)  &,\, \text{if\,} 3\leq V\leq 2021\bm{\sigma},
\vspace{1mm}
\\
|\mathcal{H}_{n}|n^{\ve} \exp\left(-4V\right)  &,\, \text{if\,} V> 2021\bm{\sigma}.
\end{array}
\right.
\end{align} 
Inserting \eqref{Upesp2} in \eqref{th1.5e1} finishes the proof of Theorem \ref{Th2}.

\section{Proof of Theorem \ref{th5}} 
Let $C_{1/g}$ be the circle in the complex plane whose center is origin and radius is $\frac{1}{g}$.
By Cauchy's integral formula
\begin{equation*}
\mathcal{L}^{(l)}(q^{-1/2},\chi_{D})=\frac{l!}{2\pi i}\oint_{C_{1/g}} L\left(\frac{1}{2}+\theta,\chi_{D}\right)\,\frac{d\theta}{{\theta}^{l+1}},
\end{equation*}
Notice that if $\theta=\alpha -\frac{it}{\log q}$, then  
\[
L\Big(\frac{1}{2}+\theta,\chi_{D}\Big) 
=\mathcal{L}\Big(\frac{v}{q^{\alpha+\, 1/2}},\chi_{D}\Big),
\]
 where $\alpha=O\left(\frac{1}{g}\right)$. Therefore, applying H\"{o}lder's inequality, we see that  
\begin{align*}
\sum_{D\in \mathcal{H}_{n}}\big|\mathcal{L}^{(l)}(q^{-1/2},\chi_{D}) \big|^{k}&\leq \left( \frac{l!}{2\pi}\right)^{k}\Big(\sum_{D\in \mathcal{H}_{n}} \oint_{C_{1/g}} \big|L\Big(\frac{1}{2}+\theta,\chi_{D}\Big)\big|^{k}\,|d\theta|\Big) \Big(\oint_{C_{1/g}} |\theta|^{-\frac{k(l+1)}{(k-1)}}|d\theta|\Big)^{(k-1)}\\
&\ll  \left( \frac{l!}{2\pi}\right)^{k} \left( \frac{2\pi}{g}\right)^{k-1} \left( \frac{g}{ 2\pi}\right)^{k(l+1)} \Big(\sum_{D\in \mathcal{H}_{n}} \oint_{C_{1/g}} \big|L\Big(\frac{1}{2}+\theta,\chi_{D}\Big)\big|^{k}\,|d\theta|\Big)\\
&\ll \left( \frac{l!}{2\pi}\right)^{k} \left( \frac{2\pi}{g}\right)^{k} \left(\frac{g}{ 2\pi}\right)^{k(l+1)} \underset{|\theta|\le \frac{1}{g}}{\max}\sum_{D\in \mathcal{H}_{n}} \big|L\Big(\frac{1}{2}+\theta,\chi_{D}\Big)\big|^{k}.
\end{align*}

\noindent
As a direct application of Theorem \ref{Th2}, we obtain 
\begin{align*}
\sum_{D\in \mathcal{H}_{n}}\Big|\mathcal{L}\Big(\frac{v}{q^{\alpha+\, 1/2}},\chi_{D}\Big)\Big|^{k}\ll_{\ve} |\mathcal{H}_{n}|\,{g}^{\frac{k(k+1)}{2}+\ve}.
\end{align*}
Using this upper bound to the above inequality, we conclude that 
\begin{align*}
\sum_{D\in \mathcal{H}_{n}}\big|\mathcal{L}^{(l)}(q^{-1/2},\chi_{D}) \big|^{k}\ll_{\ve} |\mathcal{H}_{n}|\,{g}^{\frac{k(k+1)}{2}+kl+\ve} .
\end{align*}

\vspace*{1mm} 
\noindent \textbf{Acknowledgements:}  We thank the anonymous referees for their valuable comments and insightful suggestions that have improved the quality of the manuscript.

\section{Appendix}
\subsection{Proof of claim \eqref{infinite pairs}}
We have to show that for $j\in {W}^{c}_1$, 
\begin{align}\label{infinite diagonal pair}
\underset{u=\frac{1}{qe^{2i\theta_{j}}}}{Res} D(u)=\hspace{12cm}\\
o\left(g^{k_{1}^{2}\,+\,k_{2}^{2}}\prod_{j=1}^{2}\left(\min\Big\{\frac{1}{2|\theta_{j}|}, g \Big\}\right)^{k_j(k_j +1)}\left(\min\Big\{\tfrac{1}{ | \theta_{1}-\theta_{2}|},g \Big\} \right)^{2k_{1}k_{2}} \left(\min\Big\{\tfrac{1}{ | \theta_{1}+\theta_{2}  |},g \Big\} \right)^{2k_{1}k_{2}}\right)\nonumber , 
\end{align}
and for $(1,2)\in {W}^{c}_{2\eps}$, $\eps\in \{\pm 1\}$\footnote{\label{Footnote $8$}To estimate infinite pair shift for Theorem \ref{Th2}, one can follow the article of V. Chande [\cite{CHA}, Appendix].},
\begin{align}\label{infinite non diagonal pair}
\underset{u=\frac{1}{qe^{i(\theta_{1}+\eps \theta_2)}}}{Res} D(u)=\hspace{11cm}\\
o\left(g^{k_{1}^{2}\,+\,k_{2}^{2}}\prod_{j=1}^{2}\left(\min\Big\{\frac{1}{2|\theta_{j}|}, g \Big\}\right)^{k_j(k_j +1)}\left(\min\Big\{\tfrac{1}{ | \theta_{1}-\theta_{2}|},g \Big\} \right)^{2k_{1}k_{2}} \left(\min\Big\{\tfrac{1}{|\theta_{1}+\theta_{2}|},g \Big\} \right)^{2k_{1}k_{2}}\right)\nonumber. 
\end{align}
 We will prove the claim \eqref{infinite diagonal pair} and proof of the claim \eqref{infinite non diagonal pair} follows in the similar way. We assume that $\eps,\eps_j\in\{1,-1\}$ for $j=1,2$. To prove the claim \eqref{infinite diagonal pair}, without loss of generality, we assume that $1\in W^{c}_1$, so $(1,2)\in W_{-2}^{c}$. Note that if $2 \in {W}^{c}_1$ and $(1,2)\in {W}^{c}_{ 2}$, then they are not closed to each other i.e.,  $|\theta_{1}-\theta_{2}|\gg \frac{1}{g}$, $|\theta_{1}-(\theta_{1}-\theta_{2})|=|\theta_2|\gg \frac{1}{g}$ and $|\theta_2-(\theta_{1}+\theta_{2})|=|\theta_1|\gg\frac{1}{g}$, otherwise they will contained in the sets $W_1$ and $W_{ 2}$ respectively.
\noindent
By Cauchy's theorem, we obtain
$$\underset{u=1/qe^{2i\theta_{j}}}{Res} D(u)=\underset{\widetilde{C}}{\oint}D(u)\,du,\quad j=1,2,$$ where $\widetilde{C}$ is the circle centered at $u=1/qe^{2i\theta_{j}}$ with radius $\frac{\widetilde{c}}{g}$ and $\widetilde{c}$ is defined by \eqref{definition of $c$}. Note that, $\frac{1}{g}=o(|\theta_j|)$ and $\theta_j =o(1)$ for all $j$.
\noindent
 For $u$ on the circle $\widetilde{C}$, we write
 \[
 \mathcal{Z}(u)=(1-qu)^{-1}=(1-e^{-2i\theta_{1}})^{-1}\left(1+\frac{e^{-2i\theta_{1}}(1-que^{2i\theta_{1}})}{1-e^{-2i\theta_{1}}}\right)^{-1} .
 \]
Therefore, we get
 \[
 |\mathcal{Z}(u)|\ll \frac{1}{|\theta_1|}.
 \]
  If $2\in W_1$, then it is easy to see that $|\theta_{1}\pm \theta_2|\sim |\theta_1|$. For $u$ on the circle $\widetilde{C}$,
   \[
   \mathcal{Z}(ue^{2i\eps_j \theta_2})=(1-que^{2i\eps_j \theta_2})^{-1}=(1-e^{-2i\theta_{1}})^{-1}\left(1+\frac{e^{-2i\theta_{1}}(1-que^{2i(\eps_j \theta_2-\theta_{1})})}{1-e^{-2i\theta_{1}}}\right)^{-1},
   \]
    which implies that \[
    |\mathcal{Z}(ue^{2i\eps_j \theta_2})|\ll \frac{1}{|\theta_{1}|}.
    \]
  Also, if $(1,2)\in W_{ 2}$, then for $u$ on the circle $\widetilde{C}$, we see that
     \[
     |\mathcal{Z}(ue^{2i(\eps_1 \theta_1 + \eps_2 \theta_2)})|\ll \frac{1}{|\theta_{1}|}. 
     \] 
     
     \noindent
     For elements in the infinite single shift and pair shift, we have to partition the sets ${W}^{c}_1$, ${W}^{c}_{\eps 2}$ into three different subsets to estimate bounds for the corresponding zeta functions.
    For $2\in {W}^{c}_1$, we divide the set ${W}^{c}_1$ into three subsets. First we define
 \[
 {W}^{c}_{11}:=\left\{2\in {W}^{c}_1:\underset{g\to \infty}{\lim}\,\frac{|\theta_1|}{|\theta_{2}|}\,< \,+\infty\; \text{and}\; \underset{g\to \infty}{\lim}\,\frac{\theta_1}{\theta_{2}}\neq 1 \right\}.
 \]
  If $2\in {W}^{c}_{11}$, then for $u$ on the circle $\widetilde{C}$, $$
  |\mathcal{Z}(ue^{2i\eps_j \theta_2})|\ll \frac{1}{|\theta_2|}.$$
\noindent
Next, we consider
\[
{W}^{c}_{12}=\left\{2\in {W}^{c}_1:\underset{g\to \infty}{\lim}\,\frac{|\theta_1|}{|\theta_{2}|}=\,\infty \right\}.
\]
 For $2\in {W}^{c}_{12}$ and $u$ on the circle $\widetilde{C}$, we obtain
 \[
 |\mathcal{Z}(ue^{2i\eps_j \theta_2})|\ll \frac{1}{|\theta_1|}.
 \]
\noindent
Lastly, let
\[
{W}^{c}_{13}=\left\{2\in {W}^{c}_1:\underset{g\to \infty}{\lim}\,\frac{\theta_1}{\theta_{2}}=\,1 \right\}.
\]
 For $2\in {W}^{c}_{13}$ and $u$ on the circle $\widetilde{C}$,
 \[
 |\mathcal{Z}(ue^{2i\eps_j \theta_2})|\ll \frac{1}{|\theta_1 -\theta_2|}.
 \]
\noindent
Similarly, for $(1,2)\in {W}^{c}_{\epsilon 2}$, we define 
\[
{}^{1}{W}_{\epsilon 2}^{c}=\left\{(1,2)\in {W}^{c}_{\epsilon 2}\, : \,\underset{g\to \infty}{\lim}\,\frac{|\theta_1|}{|\theta_1 -\epsilon\theta_{2}|}\,< \,+\infty\; \text{and}\; \underset{g\to \infty}{\lim}\,\frac{\theta_1}{(\theta_1 -\epsilon\theta_{2})}\neq 1 \right\}.
\]
 In this case, for $u$ on the circle $\widetilde{C}$,
  \[
  |\mathcal{Z}(ue^{i(\theta_1 - \epsilon\theta_2)})|\ll \frac{1}{|\theta_1 -\theta_2|}.
  \]
 Let 
\[
{}^{2}{W}_{\epsilon 2}^{c}=\left\{(1,2)\in {W}^{c}_{\epsilon 2}:\underset{g\to \infty}{\lim}\,\frac{|\theta_1|}{|\theta_1 -\epsilon\theta_{2}|}\,= \,+\infty\;  \right\}.
\]
\noindent
Inside the set ${}^{2}{W}_{\epsilon 2}^{c}$, for $u$ on the circle $\widetilde{C}$,
\[
|\mathcal{Z}(ue^{i(\theta_1 -\epsilon \theta_2)})|\ll \frac{1}{|\theta_1|}.
\]
\noindent
Lastly, we consider
\[
{}^{3}{W}_{\epsilon 2}^{c}=\left\{(1,2)\in {W}^{c}_{\epsilon 2}: \underset{g\to \infty}{\lim}\,\frac{\theta_1}{(\theta_1 -\epsilon\theta_{2})}= 1 \right\}.
\]
 For $u$ on the circle $\widetilde{C}$, 
 \[
 |\mathcal{Z}(ue^{i(\theta_1 - \epsilon\theta_2)})|\ll \frac{1}{|\theta_2|}.
 \]

 

\noindent
 Using these bounds for the zeta functions, we conclude that
  \begin{align*}
 \underset{\widetilde{C}}{\oint}D(u)\,du\;\ll\, {g}^{\frac{k_1(k_1+1)}{2} -1}\, {|\theta_{1}|}^{-\left(k_{1}^{2}\,+\,k_{2}^{2}+\frac{k_1(k_1+1)}{2}\right)}\, \min\left\{\frac{1}{|\theta_1|},\frac{1}{|\theta_{2}|},\frac{1}{|\theta_{1}-\theta_{2}|} \right\}^{k_2 (k_2 + 1)}\\
 \times\min\left\{\frac{1}{|\theta_1|},\frac{1}{|\theta_{2}|},\frac{1}{|\theta_{1}-\theta_{2}|} \right\}^{2k_1 k_2} \min\left\{\frac{1}{|\theta_1|},\frac{1}{|\theta_{2}|},\frac{1}{|\theta_{1}+\theta_{2}|} \right\}^{2k_1 k_2}
 \end{align*}
 Using the fact $\frac{1}{|\theta_j|}=o(g)$, one can easily cheek that the integral  $\underset{\widetilde{C}}{\oint}D(u)\,du$ is equal to
 \[o \left(g^{k_{1}^{2}\,+\,k_{2}^{2}} \prod_{j=1}^{2}\left(\min\Big\{\frac{1}{|2\theta_{j}|},g\Big\}\right)^{k_j(k_j +1)}
\left(\min\Big\{\frac{1}{|\theta_{1}-\theta_{2}|},g\Big\}\right)^{2k_1 k_2 } \left(\min\Big\{\frac{1}{|\theta_{1}+\theta_{2}|},g\Big\}\right)^{2k_1 k_2 } \right), \] and we obtain the claim \eqref{infinite diagonal pair}.

\subsection{Deduction of $d_n$}\label{calculation of dn} We start with the expression \eqref{deduction of d_n}, i.e.,
\begin{align*}
e_{0}b_{n}\frac{F_{V+n}((k_{1}+k_{2})X)}{(V+n)!}\,+\sum_{l=1}^{V+n}e_{l}b_{n}\frac{F_{V+n-l}((k_{1}+k_{2})X)}{(V+n-l)!}
\end{align*}
where $F_n (x)=x(x+1)(x+2)\ldots (x+n-1)$, for $n\ge 2$ and $F_{0}(x)=1,\, F_{1}(x)=x $.
We expand $F_n(x)$ to get
\begin{align*}
F_n (x)=x\left(x^{n-1} + s^{(n-1)}_{n-2} x^{n-2} + s^{(n-1)}_{n-3} x^{n-3} +s^{(n-1)}_{n-4} x^{n-4}+\ldots +s^{(n-1)}_{0} \right)
\end{align*} with
\[
 s^{(k)}_{k-i}=\sum_{1\le l_1 <\ldots<l_i\le k} l_1 \ldots l_i\, ,\;\; i=1,2,\ldots, k.
 \]
This gives us
\begin{align*}
e_{0}b_{n}\frac{F_{V+n}(x)}{(V+n)!}\,+e_{1}b_{n}\frac{F_{V+n-1}(x)}{(V+n-1)!} +e_{2}b_{n}\frac{F_{V+n-2}(x)}{(V+n-2)!}+e_{3}b_{n}\frac{F_{V+n-3}(x)}{(V+n-3)!}\\
+\ldots +e_{V+n-1}b_{n}F_{1}(x) + e_{V+n}b_{n}F_{0}(x)\\
= \frac{e_{0}b_{n}}{(V+n)!}\left(x^{V+n} + s^{(V+n-1)}_{V+n-2} x^{V+n-1} + s^{(V+n-1)}_{V+n-3} x^{V+n-2}+\ldots +s^{(V+n-1)}_{0}x  \right)\hspace{5mm}\\
+\frac{ e_{1}b_{n}}{(V+n-1)!}\left(x^{V+n-1} + s^{(V+n-2)}_{V+n-3} x^{V+n-2} + s^{(V+n-2)}_{V+n-4} x^{V+n-3}+\ldots +s^{(V+n-2)}_{0}x  \right)\\
+ \frac{ e_{2}b_{n}}{(V+n-2	)!}\left(x^{V+n-2} + s^{(V+n-3)}_{V+n-4} x^{V+n-3} + s^{(V+n-3)}_{V+n-5} x^{V+n-4} +\ldots +s^{(V+n-3)}_{0}x  \right)\\
+\ldots+\hspace{8cm}\\
+\frac{ e_{V+n-3}b_{n}}{3!}\left(x^{3} + s^{(2)}_{1}x^{2} +s^{(2)}_{0}x \right)
+\frac{ e_{V+n-2}b_{n}}{2!}\left(x^{2} + s^{(1)}_{0}x\right)
+ e_{V+n-1}b_{n} x 
+ e_{V+n}b_{n}.
\end{align*}

\begin{align*}
:=\frac{e_{0}b_{n}}{(V+n)!}x^{V+n} +\frac{ e_{1}b_{n} d_{V+n-1} }{(V+n-1)!}x^{V+n-1} + \frac{ e_{2}b_{n} d_{V+n-2} }{(V+n-2)!}x^{V+n-2} + \frac{ e_{3}b_{n} d_{V+n-3} }{(V+n-3)!}x^{V+n-3}  \\
+\ldots+ \frac{ e_{V+n-3}b_{n}\, d_{3} }{3!}x^{3} + \frac{ e_{V+n-2}b_{n}\, d_{2} }{2!}x^{2} + e_{V+n-1}b_{n}\, d_{1}x +  e_{V+n}b_{n}\,d_0,
\end{align*}
where $d_0 =1$, and  $1\leq l \leq V+n-1$,
\[
d_l =1+\frac{l!}{e_{V+n-l}}\left(\frac{s^{(l)}_{l-1} e_{V+n-(l+1)}}{(l+1)!} +\frac{s^{(l+1)}_{l-1} e_{V+n-(l+2)}}{(l+2)!} +\ldots
+\frac{s^{(V+n-2)}_{l-1} e_1}{(V+n-1)!} +\frac{s^{(V+n-1)}_{l-1} e_0}{(V+n)!} \right).
\]
\end{document}